\documentclass[final,reqno,twoside,onecolumn]{siamltex}
\usepackage{amscd}
\usepackage{amssymb}
\usepackage{amsmath,xcolor}
\usepackage{mathrsfs}
\usepackage{graphicx}
\usepackage{calc}%
\usepackage{multirow}
\usepackage{placeins}
\usepackage[T1]{fontenc} 
\usepackage{fourier} 
\usepackage{cite}


\newcommand{\Bb}{\mathcal B}

\newcommand{\Dt}{\Delta t}

\newcommand{\varep}{\varepsilon}
\newcommand{\gabs}[1]{\left | #1 \right|}
\newcommand{\norm} [1]{\| {#1}\|}

\newcommand{\R} {\mathbb R}

\newcommand{\M} {\mathcal M}
\newcommand{\Ro} {\mathcal R}
\newcommand{\intb}[1]{\left\langle #1 \right\rangle}
\newcommand{\intd}[1]{\left( #1 \right)}

\newcommand{\ddt}{\frac d{dt} }
\newcommand{\eqdef}{\overset{\mathrm{def}}{=\joinrel=}}
\def\beq{\begin{equation}}                                                                                                                                                                                                                                                                                                                                                                                                                                                                                                                                                                         
\def\eeq{\end{equation}} 
\def\beqs{\begin{equation*}}
\def\eeqs{\end{equation*}}
\def\bals{\begin{align*}}
\def\eals{\end{align*}}
\def\bspl{\begin{split}}
\def\espl{\end{split}}
\def\myclearpage{}
\textwidth=15.83cm

\title{A mixed finite element approximation for Darcy-Forchheimer flows of slightly~compressible~fluids}
\author{Thinh Kieu \footnotemark[2]  }
\date{today}

\begin{document}
\maketitle
\renewcommand{\thefootnote}{\fnsymbol{footnote}}
\footnotetext[2]{Department of Mathematics, University of North Georgia, Gainesville Campus, 3820 Mundy Mill Rd., Oakwood, GA 30566, U.S.A. ({\tt thinh.kieu@ung.edu}).}               
               
\begin{abstract} In this paper, we consider the generalized Forchheimer flows for slightly compressible fluids in porous media. 
Using Muskat's and Ward's general form of Forchheimer equations, we describe the flow of a single-phase fluid in $\R^d, d\ge 2$ by a nonlinear degenerate system of density and momentum. A mixed finite element method is proposed for the approximation of the solution of the above system.   
The stability of the approximations are proved; the error estimates are derived for the numerical approximations for both continuous and discrete time procedures.  
 The continuous dependence of numerical solutions on physical parameters are demonstrated. 
Experimental studies  are presented regarding convergence rates and showing the dependence of the solution on the physical parameters.  
\end{abstract}

\begin{keywords}
Porous media, error analysis, slightly compressible fluid, dependence on parameters, numerical analysis.
\end{keywords}

\begin{AMS}
65M12, 65M15, 65M60, 35Q35, 76S05.
\end{AMS}

\pagestyle{myheadings}
\thispagestyle{plain}
\markboth{Thinh Kieu}{Mixed FEM for Darcy-Forchheimer flows in porous media}
           
\myclearpage    
\section {Introduction }

The fluid flow through porous materials, e.g. soil, sand, aquifers, oil reservoir, plants, wood, bones, etc.,  is a great interest in the research community such as engineering, oil recovery, environmental and groundwater hydrology and medicine. Darcy law, which is the linear relation between the velocity vector and the pressure gradient, is used to describe fluid flow under low velocity and low porosity conditions, see in  \cite{aziz1979petroleum}. It has been observed from many experiments that when the fluid's velocity is high and porosity is nonuniform, the Darcy's law becomes inadequate. Consequently, the attention has been attracted to the nonlinear equations for describing of this kind of flow. Dupuit and Forchheimer proposed a modified equation, known as Darcy- Forchheimer equation or generalized Forchheimer equation, by adding the nonlinear terms of velocity to Darcy law, see~\cite{ForchheimerBook}. Since then, there have been a growing number of articles studying  Darcy- Forchheimer equation in theoretical  studies (e.g.\cite{MR2433781, MR2761004,MR2459923,MR2232258,MR2376202,MR2298682}) and numerical studies (e.g.\cite{MR2592414,DW93, EJP05, GW08, KP99, salas2013analysis}).

It is well known that the mixed finite element method is among the popular numerical methods for the modeling flow in porous media because it produces the accurate results for both scalar (density or pressure) and vector (velocity or momentum) functions, see~\cite{EJP05}. An analysis of mixed finite element method to a Darcy-Forchheimer steady state model was well studied in \cite{HRH12, salas2013analysis}. The mixed methods for a nondegenerate system modeling flows in porous media was studied in \cite {DW93, EJP05, GW08, KP99}. The authors in \cite {ATWZ96, WCD00,FS95,FS04} analyzed the mixed finite element approximations of the nonlinear degenerate system modeling water-gas flow in porous media. In their analysis, the Kirchhoff transformation is used to move the nonlinearity from coefficients to the gradient.   

The objective of this paper is to analyze mixed finite element approximations to the solutions of the system of equations modeling the flows of a single-phase compressible fluid in porous media subject to the generalized Forchheimer law. This is a nonlinear degenerate system with coefficients depending on the density gradient and degenerating  to zero as it approaches to infinity. The Kirchhoff transformation is not applicable for this system. For our degenerate equations, we combine the techniques developed in our previous works in \cite {HI1,HI2,HIKS1, HK1,HK2,HKP1, IK1, K1} and utilize the special structures of the equations to obtain the stability of the approximated solution and the continuous dependence of the solution on parameters. The error estimates also derives for the density and momentum.      

  The paper is structured as follows.  
  In section \S \ref{Intrsec}, we introduce the notations and the relevant results.
 In section \S \ref{MFEM}, we defined a numerical approximation using mixed finite element approximations and the implicit backward difference time discretization to the initial boundary value problem (IVBP)~\ref{rho:eq}. 
 In section \S \ref{Bsec}, we establish many estimates of the energy type norms for the approximate solution $(\rho_h, m_h)$ to the IVBP problem~\eqref{semidiscreteform} in Lebesgue norms in terms of the boundary data and the initial data.
In section \S \ref{dependence-sec}, we focus on proving the continuous dependence of the solution on the coefficients of Forchheimer polynomial $g$. In order to obtain this, we first establish the perturbed monotonicity for our degenerate partial differential equations, see  in \eqref{quasimonotone}. It is then proved in Theorem~\ref{DepCoeff} that the difference between the two solutions, which corresponds to two different coefficient vectors $\vec a_1$ and  $\vec a_2$ is estimated in terms of $|\vec a_1-\vec a_2|$, see in \eqref{ssc2}.
In section \S \ref{err-sec}, we study in Theorem~\ref{err-thm} the convergence and in Theorem~\ref{par-thm} the dependence on coefficients of Forchheimer polynomial of the approximated solution to the problem~\eqref{semidiscreteform}. Furthermore, we can specify the convergent rate.          
In section \S \ref{err-ful-sec}, we study the fully discrete version of problem~\eqref{semidiscreteform}. In Lemma~\ref{stab-appr}, the stability of the approximated solution is proved. Theorems~\ref{Err-ful}~and~\ref{Dep-ful} are for studying the error estimates and the continuous dependence on parameters of the numerical solution.   
In section \S \ref{Num-result}, the numerical experiments in the two-dimensions using the standard finite elements $\mathbb P_1$ are presented regarding the convergence rates and the dependence of the solution on the physical parameters.

\section {Preliminaries and auxiliaries} \label{Intrsec}  We consider a fluid in porous medium  occupying a bounded domain $\Omega\subset \R^d,$ $d\ge 2$ with boundary $\Gamma$. Let $x\in\R^d $, $0<T<\infty$ and $t\in (0,T]$ be the spatial and time variables respectively. The fluid flow has velocity $v(x,t)\in \R^d$, pressure $p(x,t)\in\R$ and density $\rho(x,t)\in \R_+$.   

The  Darcy--Forchheimer equation is studied in \cite{ABHI1,HI1,HI2} of the form 
\beq\label{gF}
-\nabla p =\sum_{i=0}^N a_i |v|^{\alpha_i}v. 
\eeq  
where $N\ge 1$, $\alpha_0=0<\alpha_1<\ldots<\alpha_N$ are real (not necessarily integral) numbers, and the coefficients satisfy  $a_0,~a_N~>~0$ and $a_1,\ldots,a_{N-1}\ge 0$.

In order to take into account the presence of density in the generalized Forchheimer equation, we modify \eqref{gF} using the dimensional analysis by Muskat \cite{Muskatbook} and Ward \cite{Ward64}. They proposed the following equation for both laminar and turbulent flows in porous media:
\beq\label{W}
-\nabla p =F(v^\alpha \kappa^{\frac {\alpha-3} 2} \rho^{\alpha-1} \mu^{2-\alpha}),\text{ where  $F$ is a function of one variable.}
\eeq 
In particular, when $\alpha=1,2$, Ward \cite{Ward64} established from experimental data that
\beq\label{FW} 
-\nabla p=\frac{\mu}{\kappa} v+c_F\frac{\rho}{\sqrt \kappa}|v|v,\quad \text{where }c_F>0.
\eeq

Combining  \eqref{gF} with the suggestive form \eqref{W} for the dependence on $\rho$ and $v$, we propose the following equation 
 \beq\label{FM}
-\nabla p= \sum_{i=0}^N a_i \rho^{\alpha_i} |v|^{\alpha_i} v.
 \eeq
Here, the viscosity and permeability are considered constant, and we do not specify the dependence of $a_i$'s on them.

Multiplying equation \eqref{FM} to $\rho$, we obtain 
 \beq\label{eq1}
 g( |\rho v|) \rho v   =-\rho\nabla p,
 \eeq
where the function $g$ is a generalized polynomial with non-negative coefficients. More precisely,  the function $g:\R_+\rightarrow\R_+$ is of the form
\beq\label{eq2}
g(s)=a_0s^{\alpha_0} + a_1s^{\alpha_1}+\cdots +a_Ns^{\alpha_N},\quad s\ge 0, 
\eeq 
where $N\ge 1$, $\alpha_0=0<\alpha_1<\ldots<\alpha_N$ are real (not necessarily integral) numbers. The coefficients satisfy  $a_0,~a_N~>~0$ and $a_1,\ldots,a_{N-1}\ge 0$.
The number $\alpha_N$ is the degree of $g$ and is denoted by $\deg(g)$.
Denote the vectors of powers and coefficients by $\vec \alpha=(\alpha_0,\ldots,\alpha_N)$ and
 $\vec a=(a_0,\ldots,a_N)$. 
The class of functions $g(s)$ as in \eqref{eq2} is denoted by FP($N,\vec \alpha$), which is the abbreviation of ``Forchheimer polynomials.'' When the function $g$ in \eqref{eq2} belongs to FP($N,\vec \alpha$), it is referred to as the Forchheimer polynomial. 

For slightly compressible fluids, the state equation is
\beq
\frac{d\rho}{dp}=\frac {\rho}{\kappa}, \quad \kappa=\text{const}., \kappa\gg 1
\eeq
which yields 
\beq \label{eq3}
\rho \nabla p=\kappa\nabla \rho.
\eeq
It follows form \eqref{eq2} and \eqref{eq3} that  
 \beq\label{Gs}
 g( |\rho v|) \rho v   =-\kappa\nabla \rho.
 \eeq
Solving  for $\rho v$ from \eqref{Gs} gives 
\beq\label{ru} 
\rho v=- \kappa K(|\kappa\nabla \rho |)\nabla \rho,
\eeq
where the function $K: \R_+\rightarrow\R_+$ is defined for $\xi\ge 0$ by
\beq\label{Kdef}
K(\xi)=\frac{1}{g(s(\xi))}, \text{ with }  s=s(\xi) \text{  being the unique non-negative solution of } sg(s)=\xi.
\eeq  

The continuity equation is
\beq\label{con-law}
\phi\rho_t+{\rm div }(\rho v)=f,
\eeq
where $\phi$ is the porosity, and $f$ is the external mass flow rate . 

By rescaling the coefficients of the conductivity function $K(\cdot)$, we can assume $\kappa=1$. We introduce the momentum variables 
$
m=\rho v. 
$

Combining \eqref{ru} and \eqref{con-law}, we obtain equations in a density-momentum formulation 
\beq\label{utporo}
\begin{cases}
 m   +K( |\nabla \rho|)\nabla \rho =0, \\
 \phi\rho_t+ \nabla\cdot  m=f.
\end{cases}
\eeq

We will study the initial boundary value problem (IVBP) associated with the coupled system \eqref{utporo}. 
We will derive estimates for the solution, and establish their continuous dependence on the coefficients of the function $g(s)$ in \eqref{eq2}. As seen below, the generalized permeability tensor $K$ is degenerate to zero when $|\nabla \rho|\to \infty$.  This was not considered in existing papers, e.g.  in \cite{JD93, KP99, HRH12}. Therefore, it creates an additional challenge  and requires extra care in the proof and analysis.

 Let $g=g(s,\vec a)$ in FP($N,\vec \alpha$). 
The following numbers  are frequently used in our calculations:
\begin{align}
\label{Ag} 
&\chi(\vec a)=\max\Big \{a_0,a_1,\ldots,a_N,\frac1{a_0},\frac1{a_N} \Big \}\in [1,\infty),\\
 \label{adef}
 &  a=\frac{\alpha_N}{\alpha_N+1}= \frac{\deg \ g}{\deg \ g+1}\in (0,1),
  \quad \beta =2-a\in (1,2),\quad  \lambda= \frac {2-a}{1-a}=\frac{\beta}{\beta-1}\in(2,\infty).
  \end{align}

\begin{lemma}[cf. \cite{ABHI1, HI1}, Lemma 2.1] Let $g(s,\vec a)$ be in class FP($N,\vec \alpha$). For any $\xi\ge 0$, One has 
 \begin{enumerate}
\item [(i)] $K: [0,\infty)\to (0,a_0^{-1}]$ and it decreases in $\xi$.  

\item[(ii)] For any $n\ge 1$, the function $K(\xi)\xi^n$ increases and $K(\xi)\xi^n\ge 0$.

\item[(iii)]  Type of degeneracy  \beq\label{i:ineq1}  
c_0(1+\xi)^{-a}\le K(\xi, \vec a)\le c_1 (1+\xi)^{-a}.
\eeq

\item[(iv)]  For all $n\ge 1,\delta>0,$ 
\beq\label{i:ineq2} 
c_2 (\xi^{m-a}-\delta^{m-a}) \le  K(\xi,\vec a)\xi^m\le c_3\xi^{m-a},
\eeq
where $c_0, c_1, c_2, c_3$ depend on $N$ and $\chi(\vec a), \alpha_N$ only.

In particular, when $m=2$, $\delta=1$, one has
\beq\label{iineq2} 
c_2(\xi^{2-a}-1)\le K(\xi,\vec a)\xi^2\le c_3\xi^{2-a}.
\eeq

 \item[(v)]  Relation with its derivative \beq\label{i:ineq3} -aK(\xi)\leq K'(\xi)\xi\leq 0. \eeq
\end{enumerate}
\end{lemma}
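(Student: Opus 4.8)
The plan is to work throughout with the auxiliary function $\Phi(s)=s\,g(s)$. Since $g(s)\ge a_0>0$ and $g$ has nonnegative coefficients and nonnegative exponents, $\Phi$ is continuous and strictly increasing on $[0,\infty)$ with $\Phi(0)=0$ and $\Phi(s)\to\infty$; hence $s=s(\xi)$ from \eqref{Kdef} is a well-defined, strictly increasing function of $\xi$ with $s(0)=0$, and $K(\xi)=1/g(s(\xi))$. Two identities will be used repeatedly: $\xi=s\,g(s)$ and, as a consequence, $K(\xi)\xi^{n}=s^{n}g(s)^{n-1}$ for every $n\ge1$. Parts (i) and (ii) are then immediate monotonicity statements. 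Because $g$ is nondecreasing and $s(\xi)$ is nondecreasing, $g(s(\xi))$ is nondecreasing, so $K$ is nonincreasing; moreover $K(0)=1/g(0)=a_0^{-1}$ and $g(s(\xi))\ge a_0$ force $K(\xi)\in(0,a_0^{-1}]$, which is (i). For (ii) I would read off $K(\xi)\xi^{n}=s^{n}g(s)^{n-1}$, a product of nonnegative nondecreasing functions of $\xi$ (here $n\ge1$ makes the exponent $n-1\ge0$), hence itself nonnegative and nondecreasing.

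The heart of the lemma is the two-sided degeneracy estimate \eqref{i:ineq1}, which I would obtain by splitting according to whether $s(\xi)\le1$ or $s(\xi)\ge1$. In the range $s\le1$ every power satisfies $s^{\alpha_i}\le1$, so $a_0\le g(s)\le\sum_{i=0}^{N}a_i\le(N+1)\chi(\vec a)$ and likewise $\xi=s\,g(s)\le(N+1)\chi(\vec a)$; thus both $K=1/g$ and $(1+\xi)^{-a}$ are pinched between two positive constants depending only on $N,\chi(\vec a),\alpha_N$, and \eqref{i:ineq1} holds there. In the range $s\ge1$ one has $s^{\alpha_i}\le s^{\alpha_N}$, so $a_N s^{\alpha_N}\le g(s)\le(N+1)\chi(\vec a)\,s^{\alpha_N}$; consequently $g(s)$ is comparable to $s^{\alpha_N}$ and $\xi=s\,g(s)$ is comparable to $s^{\alpha_N+1}$, with constants controlled by $N,\chi(\vec a),\alpha_N$. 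Inverting the latter gives $s$ comparable to $\xi^{1/(\alpha_N+1)}$ and hence $g(s)$ comparable to $\xi^{\alpha_N/(\alpha_N+1)}=\xi^{a}$; since $s\ge1$ forces $\xi\ge a_0$, so that $1+\xi$ is comparable to $\xi$, this yields $K=1/g(s)$ comparable to $(1+\xi)^{-a}$ and establishes \eqref{i:ineq1}.

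I expect this bookkeeping across the two regimes --- producing a single pair of constants $c_0,c_1$ valid for all $\xi$ and depending only on $N,\chi(\vec a),\alpha_N$ --- to be the main technical obstacle, since the two asymptotics (constant near $\xi=0$, power law $\xi^{-a}$ as $\xi\to\infty$) must be glued at the transition $s=1$ into the uniform profile $(1+\xi)^{-a}$. Granting \eqref{i:ineq1}, the remaining parts are short. For the upper bound in \eqref{i:ineq2} I multiply \eqref{i:ineq1} by $\xi^{m}$ and use $(1+\xi)^{-a}\le\xi^{-a}$, giving $K(\xi)\xi^{m}\le c_1\xi^{m-a}$. For the lower bound I treat $\xi\le\delta$ and large $\xi$ separately: when $\xi\le\delta$ the right-hand side $c_2(\xi^{m-a}-\delta^{m-a})$ is nonpositive while $K(\xi)\xi^{m}\ge0$, and once $\xi\ge1$ the comparability of $1+\xi$ and $\xi$ turns $K(\xi)\xi^{m}\ge c_0(1+\xi)^{-a}\xi^{m}$ into a fixed multiple of $\xi^{m-a}$, from which \eqref{i:ineq2} follows. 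The displayed special case \eqref{iineq2} with $m=2,\delta=1$ is the clean instance in which the cut-off $\xi=\delta=1$ matches exactly the transition $s=1$, so no intermediate region needs separate treatment.

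Finally, for (v) I would differentiate the defining relation $s\,g(s)=\xi$ to get $s'(\xi)=1/\bigl(g(s)+s\,g'(s)\bigr)$, whence
\[
K'(\xi)\,\xi=-\,\frac{s\,g'(s)}{g(s)\bigl(g(s)+s\,g'(s)\bigr)}.
\]
The right inequality in \eqref{i:ineq3} is then clear from nonnegativity of $g,g',s$. The left inequality reduces, after using $a/(1-a)=\alpha_N$ from \eqref{adef}, to the bound $s\,g'(s)/g(s)\le\alpha_N$; this holds because
\[
\frac{s\,g'(s)}{g(s)}=\frac{\sum_{i=0}^{N}a_i\alpha_i s^{\alpha_i}}{\sum_{i=0}^{N}a_i s^{\alpha_i}}
\]
is a convex combination of the exponents $\alpha_0,\dots,\alpha_N$ and therefore never exceeds $\alpha_N$, completing the proof.
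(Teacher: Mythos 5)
The paper does not actually prove this lemma --- it is quoted from \cite{ABHI1,HI1} --- so there is no in-paper argument to compare against; I can only judge your proposal on its own terms. Parts (i), (ii), (iii) and (v) are correct and are the standard route: the identity $K(\xi)\xi^{n}=s^{n}g(s)^{n-1}$, the two-regime split at $s=1$ with $g(s)\asymp s^{\alpha_N}$ and $\xi\asymp s^{\alpha_N+1}$ for $s\ge 1$, and the observation that $sg'(s)/g(s)$ is a weighted average of the exponents, hence at most $\alpha_N=a/(1-a)$, are exactly the right ingredients, and your constants do depend only on $N$, $\chi(\vec a)$, $\alpha_N$ as required.

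The one genuine gap is the lower bound in (iv). You dispose of $\xi\le\delta$ (right-hand side nonpositive) and $\xi\ge 1$ (where $1+\xi\le 2\xi$), but when $\delta<1$ the window $\delta<\xi<1$ is left untreated, and it cannot be absorbed for free: there $K(\xi)\xi^{m}\asymp\xi^{m}$ while $\xi^{m-a}-\delta^{m-a}\asymp\delta^{m-a}$ (take $\xi=2\delta$), and since $\delta^{m}\ll\delta^{m-a}$ as $\delta\to 0$, no $\delta$-independent $c_2$ works. So the inequality as transcribed in the paper is an over-claim; it becomes true if $c_2$ is allowed to depend on $\delta$ (on $[\delta,1]$ use $\xi^{m}=\xi^{a}\xi^{m-a}\ge\delta^{a}\xi^{m-a}\ge\delta^{a}(\xi^{m-a}-\delta^{m-a})$, so $c_2\sim c_0\delta^{a}$ suffices), and the only instance the paper ever uses is $m=2$, $\delta=1$, which your argument does cover. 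You clearly sensed this issue (your remark that $\delta=1$ needs ``no intermediate region'') but you should close the case explicitly rather than pass over it, and flag that the uniform-in-$\delta$ claim needs the stated repair. A cosmetic point: your upper bound lands on the constant $c_1$ where the lemma calls it $c_3$, and the lemma's quantifier ``$n\ge1$'' versus the variable $m$ in \eqref{i:ineq2} is a typo of the source you should note rather than silently normalize.
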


We define
\beq\label{Hxi} H(\xi,\vec a)=\int_0^{\xi^2}K(\sqrt s,\vec a)ds \quad \hbox{for } \xi\ge 0.\eeq

When vector $\vec a$ is fixed, we denote $K(\cdot,\vec a)$ and $H(\cdot,\vec a)$ by $K(\cdot)$ and $H(\cdot)$, respectively.
The function $H(\xi)$ can be compared with $\xi$ and $K(\xi)$ by 
\beq\label{Hcompare} 
K(\xi)\xi^2\leq H(\xi)\le 2K(\xi)\xi^2,\quad c_4(\xi^{2-a}-1)\leq H(\xi)\le  c_5\xi^{2-a},
\eeq
where $c_4,c_5>0$ depend on $\chi(\vec a)$.

For convenience, we use the following notations: let $\vec{x}=(x_1, x_2, \ldots, x_d)$ and $\vec{x}'=(x'_1, x'_2, \ldots, x'_d)$ be two arbitrary vectors of the same length, including possible length $1$. We denote by $\vec{x}\vee \vec{x}'$ and $\vec{x}\wedge \vec{x}'$ their maximum and minimum vectors, respectively, with components
$(\vec{x}\vee \vec{x}')_j=\max\{x_j, x'_j\}$  and $(\vec{x}\wedge \vec{x}')_j=\min\{ x_j, x'_j\}$.

\begin{lemma} Let $S=\{ \vec{a} =( a_0, \ldots, a_N): a_0, a_N>0, a_1, \ldots, a_{N-1}\ge 0\}$ be the set of admissible $\vec a$. For any coefficient vectors  $\vec a$, $\vec a'\in S$,  and  any  $y,y'\in \mathbb R^d$, one has 

(i)
\beq\label{Umono}
\begin{split}
|K(|y|,\vec a) y-K(|y'|,\vec a')y'|&\le (1+a) |y-y'| \int_0^1  K(|\gamma(t)|,\vec{b}(t)) dt\\
 &+ d_0 (|y|\vee |y'|)  |\vec a -\vec a'| \int_0^1 K(|\gamma(t)|,\vec{b}(t)) dt. 
\end{split}
\eeq

(ii)
\beq
\begin{aligned}\label{quasimonotone}
(K(|y|,\vec a) y-K(|y'|,\vec a')y')\cdot (y-y')
&\ge (1-a) |y-y'|^2 \int_0^1  K(|\gamma(t)|,\vec{b}(t)) dt  \\
&-d_0 (|y|\vee |y'|)   |\vec a-\vec a'| |y-y'| \int_0^1 K(|\gamma(t)|,\vec{b}(t)) dt  , 
\end{aligned}
\eeq
where $a\in(0,1)$ is defined in \eqref{adef},  
$$\gamma(t)=ty+(1-t)y', \quad \vec{b}(t)=\vec a+(1-t)\vec a',\quad
d_0=N\left(\min\{a_0, a'_0,(1+\alpha_N)a_N,(1+\alpha_N)a'_N\}\right)^{-1}.
$$

In particular, if $\vec a = \vec a'$ then \eqref{Umono} and \eqref{quasimonotone} become

(iii)
\beq\label{Lipchitz}
   \left|K(|y'|)y' -K(|y|)y\right| \leq (1+a) |y' -y| \int_0^1 K(|\gamma(t)|, \vec{a}) dt.
\eeq  
 
(iv)
\beq\label{monotone0}
   (K(|y'|)y' -K(|y|)y)\cdot(y'-y) \geq (1-a)|y' -y|^2 \int_0^1 K(|\gamma(t)|, \vec{a}) dt.
\eeq 
\end{lemma}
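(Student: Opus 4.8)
The plan is to prove all four inequalities simultaneously by integrating the exact differential of the vector field $\Phi(y,\vec a)\eqdef K(|y|,\vec a)\,y$ along the joined line segments $\gamma(t)=ty+(1-t)y'$ and $\vec b(t)=t\vec a+(1-t)\vec a'$ connecting $(y',\vec a')$ to $(y,\vec a)$. Since $\gamma(1)=y,\ \vec b(1)=\vec a$ and $\gamma(0)=y',\ \vec b(0)=\vec a'$, the fundamental theorem of calculus gives
\beq\label{FTCstep}
K(|y|,\vec a)\,y-K(|y'|,\vec a')\,y'=\int_0^1\frac{d}{dt}\Big[K(|\gamma(t)|,\vec b(t))\,\gamma(t)\Big]\,dt.
\eeq
Writing $\gamma'(t)=y-y'$, $\vec b'(t)=\vec a-\vec a'$ and $\frac{d}{dt}|\gamma|=\frac{\gamma\cdot(y-y')}{|\gamma|}$, the chain rule expands the integrand into three pieces: a principal term $K(|\gamma|,\vec b)(y-y')$, a radial term $K'(|\gamma|,\vec b)\frac{\gamma\cdot(y-y')}{|\gamma|}\gamma$, and a coefficient term $\big(\nabla_{\vec a}K(|\gamma|,\vec b)\cdot(\vec a-\vec a')\big)\gamma$.

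For part (i) I would take absolute values. The first two pieces are controlled by the scalar derivative bound \eqref{i:ineq3}: since $|\gamma\cdot(y-y')|\le|\gamma|\,|y-y'|$ and $|K'(|\gamma|)|\,|\gamma|\le aK(|\gamma|)$, their combined norm is at most $(1+a)K(|\gamma|,\vec b)\,|y-y'|$, which after integration produces the first term on the right of \eqref{Umono}. For part (ii) I would instead dot \eqref{FTCstep} with $y-y'$: the principal piece gives exactly $K(|\gamma|,\vec b)\,|y-y'|^2$, while the radial piece equals $K'(|\gamma|,\vec b)\frac{(\gamma\cdot(y-y'))^2}{|\gamma|}\ge K'(|\gamma|,\vec b)\,|\gamma|\,|y-y'|^2\ge-aK(|\gamma|,\vec b)\,|y-y'|^2$, using $(\gamma\cdot(y-y'))^2\le|\gamma|^2|y-y'|^2$, the sign $K'\le0$ from (i), and \eqref{i:ineq3}. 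Their sum is bounded below by $(1-a)K(|\gamma|,\vec b)\,|y-y'|^2$, giving the first term of \eqref{quasimonotone}.

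The heart of the argument is the coefficient term, where the degeneracy makes the estimate delicate. Using $|\gamma(t)|\le|y|\vee|y'|$ and $|\vec b'(t)|=|\vec a-\vec a'|$, it suffices to establish the pointwise ratio bound $|\partial K/\partial a_i|\le(\min\{a_0,(1+\alpha_N)a_N\})^{-1}K$. Differentiating the defining relation $s\,g(s,\vec a)=\xi$ implicitly in $a_i$ at fixed $\xi$ and using $K=1/g$ yields the clean identity
\beq\label{dKdai}
\frac{\partial K}{\partial a_i}=-\frac{s^{\alpha_i}}{g\,(g+sg')},\qquad\text{hence}\qquad \frac1K\Big|\frac{\partial K}{\partial a_i}\Big|=\frac{s^{\alpha_i}}{g+sg'}=\frac{s^{\alpha_i}}{\sum_{j}a_j(1+\alpha_j)s^{\alpha_j}}.
\eeq
I would split into $s\le1$ and $s\ge1$: keeping only the $j=0$ term in the denominator handles $s\le1$ (bound $1/a_0$, since $\alpha_0=0$), and keeping only the $j=N$ term handles $s\ge1$ (bound $1/((1+\alpha_N)a_N)$). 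Summing over the $N+1$ indices $i$ and replacing the segment coefficients by $b_j(t)\ge\min\{a_j,a_j'\}$ produces the constant $d_0$ and the coefficient term of \eqref{Umono}; the analogous term of \eqref{quasimonotone} follows by Cauchy--Schwarz after dotting with $y-y'$. Finally, parts (iii) and (iv) are the special case $\vec a=\vec a'$, where $\vec b'(t)\equiv0$ annihilates the coefficient term and $\vec b(t)\equiv\vec a$, so the remaining estimates collapse to \eqref{Lipchitz} and \eqref{monotone0}. The main obstacle I anticipate is justifying \eqref{FTCstep} and \eqref{dKdai} near $\xi=0$, where $s=0$ and $|\gamma(t)|$ may vanish on a null set of $t$; since $K\le a_0^{-1}$ keeps every integrand bounded, this is handled by the implicit function theorem for $\xi>0$ together with a continuity/approximation argument.
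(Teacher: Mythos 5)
Your proposal is correct and follows essentially the same route as the paper: the fundamental theorem of calculus along the joint segment $(\gamma(t),\vec b(t))$, the same split of the integrand into the $(y-y')$-derivative part (controlled by \eqref{i:ineq3} to give the factors $1\pm a$) and the $\vec a$-derivative part, the same implicit differentiation of $sg(s,\vec a)=\xi$ yielding $K_{a_i}=-K\,s^{\alpha_i}/(g+sg_s)$, and the same limiting argument when the segment passes through the origin. The only material difference is cosmetic: you bound $\sum_i|K_{a_i}|/K$ by a case split $s\le 1$ versus $s\ge 1$, which yields the constant $N+1$ in place of the paper's $N$ (obtained there via $s^{\alpha_j}\le 1+s^{\alpha_N}$), a harmless discrepancy in the definition of $d_0$.
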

\begin{proof} 
(i). Let  $\vec a$, $\vec a'\in S$ and  $y,y' , \vec {k} \in \mathbb R^d$.

{\it Case 1}: The origin does not belong to the segment connecting  $y'$ and  $y$. Define 
 $$z(t)=K(|\gamma(t)|,\vec{b}(t))\, \gamma(t)\cdot \vec {k} .$$
 By the Mean Value Theorem,  we have 
\beqs
I
\eqdef [K(|y|,\vec a)y-K(|y'|,\vec a')y']\cdot \vec {k}
=z(1)-z(0)=\int_0^1 z'(t) dt.
\eeqs
Elementary calculations give
\beq\label{Ifm}
I=\int_0^1 f_1(t)dt + \int_0^1 f_2(t) dt\eqdef I_1+I_2,
\eeq
where
\begin{align*}
f_1(t)&=K(|\gamma(t)|,\vec{b}(t))(y-y')\cdot \vec{k}
 +K_\xi(|\gamma(t)|,\vec{b}(t))\frac{\gamma(t)\cdot(y-y')}{|\gamma(t)|} \gamma(t)\cdot \vec{k},\\
f_2(t)&= K_{\vec a}(|\gamma(t)|,\vec{b}(t))(\vec a-\vec a')\gamma(t)\cdot \vec{k} .
\end{align*}
 $\bullet$ \emph{Estimation of $I_1$.}  Since 
\beqs
\begin{split}
|f_1(t)| \le \left |K(|\gamma(t)|,\vec{b}(t))(y-y')
 +K_\xi(|\gamma(t)|,\vec{b}(t))\frac{\gamma(t)\cdot(y-y')}{|\gamma(t)|} \gamma(t)  \right|\,  |\vec{k}| \le (1+a)K(|\gamma(t)|,\vec{b}(t)) |y-y'|\, |\vec{k}|, 
\end{split}
\eeqs
we find that
\beq\label{UI1}
I_1 \le \int_0^1 |f_1(t)| dt \le  (1+a) |y-y'|\, |\vec{k}|\, \int_0^1 K(|\gamma(t)|,\vec{b}(t)) dt.
\eeq
$\bullet$ \emph{Estimation of $I_2$.} We find the partial derivative of $K(\xi,\vec{a})$ in $\vec{a}$.
For $i=0,1,\ldots,N$,  taking the partial derivative in $a_i$  of the identity $K(\xi,\vec{a})=1/g(s(\xi,\vec{a}),\vec{a})$, we find that
\begin{align*}
 K_{a_i}(\xi,\vec{a})=-\frac{g_{a_i}+g_s\cdot  s_{a_i}}{g^2}=-K(\xi,\vec{a})\frac{g_{a_i}+g_s\cdot  s_{a_i}}{g}.
\end{align*}
From  $sg(s,\vec a)=\xi $, we have for $i=0,1,\dots, N$,
$
s_{a_i}\cdot g+s\cdot ( g_{a_i}+g_s\cdot  s_{a_i} )=0,
$ which implies $
s_{a_i}=\frac{-s\cdot g_{a_i}}{g+s\cdot g_s}.
$

Hence,  we obtain
\beq\label{Kai}
K_{a_i}(\xi,\vec{a})=-K(\xi,\vec{a})\frac{g_{a_i}+g_s\cdot  \frac{-s\cdot g_{a_i}}{g+s\cdot g_s}}{g}
=-K(\xi,\vec{a})\frac{ g_{a_i}}{g+s\cdot g_s}=-K(\xi,\vec{a})\frac{ s^{\alpha_i}}{g+s\cdot g_s}.
\eeq
This shows that
\beqs
 \sum_{i=0}^N |K_{a_i}(\xi,\vec{a})|
\le K(\xi,\vec{a}) \frac{ 1+s^{\alpha_1}+\cdots+s^{\alpha_N} }{ a_0+(1+\alpha_1)a_1s^{\alpha_1}+\cdots+(1+\alpha_N)a_Ns^{\alpha_N} }.
\eeqs

Using inequality $x^\gamma\le x^{\gamma_1}+x^{\gamma_2}\quad\text{for all }x>0,\ \gamma_1\le \gamma \le \gamma_2$, we have
$
s^{\alpha_1},\ldots, s^{\alpha_{N-1}}\le 1+s^{\alpha_N}.
$
Hence,
\beqs
 \sum_{i=0}^N |K_{a_i}(\xi,\vec{a})|
\le K(\xi,\vec{a}) \frac{N(1+s^{\alpha_N}) }{a_0+(1+\alpha_N)a_N s^{\alpha_N} }
\le d (\vec{a})K(\xi,\vec{a}),
\eeqs
where
$
d(\vec{a})=N\left(\min\{a_0,(1+\alpha_N)a_N\}\right)^{-1}.
$
Thus,
\beq\label{Ka}
|K_{\vec a}(\xi,\vec{a})| \le d(\vec{a}) K(\xi,\vec{a}).
\eeq

Using the estimate \eqref{Ka}, we bound 
\beq\label{ih2}
\begin{aligned}
| f_2(t)|
\le |K_{\vec{a}}(|\gamma(t)|,\vec{b}(t))|\ |\vec a -\vec a'|\  | \gamma(t) |\ |\vec{k}|\le d(\vec{b}(t)) K(|\gamma(t)|,\vec{b}(t))\ | \gamma(t) |\  |\vec a -\vec a'|  \ |\vec{k}|.
\end{aligned}
\eeq

Since $a_i, a'_i$ is positive for $i=0,N$, the number $d(\vec{b}(t))$, for all $t\in[0,1]$, can be bounded  by $d(\vec{b}(t)) \le  d_0.$ Using the fact $|\gamma(t)|\le |y|\vee |y'|$,  \eqref{ih2} yields
\beq\label{Uh2}
| f_2(t)|
\le d_0 K(|\gamma(t)|,\vec{b}(t))  (|y|\vee |y'|)   |\vec a -\vec a' | \ |\vec{k}|,
\eeq
and consequently,
\beq \label{UI2}
I_2\le  \int_0^1 | f_2(t)|dt 
\le  d_0 (|y|\vee |y'|)  |\vec a -\vec a'| |\vec{k}|\int_0^1 K(|\gamma(t)|,\vec{b}(t))| dt. 
\eeq

Thus, we obtain \eqref{Umono} by combining \eqref{Ifm}, \eqref{UI1} and \eqref{UI2}.

{\it Case 2}: The origin belongs to the segment connect $y', y$. We replace $y'$ by $y'+y_\varep$ so that $0\notin [y'+y_\varep, y]$ and $y_\varep \to 0$ as $\varep \to 0$. Apply the above inequality for $y$ and $y'+y_\varep$, then let $\varep\to 0$.

{(ii)} If $\vec{k} = y-y'$ then 
\begin{align*}
K_\xi(|\gamma(t)|,\vec{b}(t))\ge -a \frac{K(|\gamma(t)|,\vec{b}(t))}{|\gamma(t)|}.
\end{align*}
This implies 
\beq\label{ks}
f_1(t) \ge (1-a) K(|\gamma(t)|,\vec{b}(t))|y-y'|^2.
\eeq
It follows \eqref{ks} that 
\beq\label{int-h1}
\int_0^1 f_1(t) dt
  \ge  (1-a)|y'-y|^2 \int_0^1 K(|\gamma(t)|,\vec{b}(t)) dt,
\eeq
 and from \eqref{Uh2}, we see that
\beq \label{I2geq}
I_2\ge - \int_0^1 | f_2(t)|dt 
\ge - d_0 (|y|\vee |y'|)   |\vec a -\vec a'|\, |y'-y|\int_0^1 K(|\gamma(t)|,\vec{b}(t))| dt  .
\eeq
Thus, we obtain \eqref{quasimonotone} by combining \eqref{Ifm}, \eqref{int-h1} and \eqref{I2geq}.
\end{proof}

Now we derive the trace estimates suitable for our nonlinear problem.   

\begin{lemma}\label{traceest} Assume $v(x)$ is a function defined on $\Omega$.

 (i) If $|v|\in W^{1,1}(\Omega)$ then there is a positive constant $C_1$ depending on $\Omega, \beta$ such that for all $\varep>0$,      
 \beq\label{sec2}
\int_{\Gamma} |v|d\sigma\le C_1\norm{v} + \varep\norm{\nabla v}_{0,\beta}^\beta +C_1\varep^{-\frac 1{\beta-1}}.
\eeq

(ii) If $u\in L^\infty(\Gamma)$ and $|v|\in W^{1,1}(\Omega)$ then  there exists  $C_2(\Omega,\beta)>0$ such that for all $\varep>0,$
\beq\label{bnd-eps}
\gabs{ \langle u, v\rangle } \le \varep \left(\norm{v}^2+ \norm{\nabla v}_{0,\beta}^{\beta}\right)+C_2\Big( \varep^{-1}\norm{u}_{L^\infty(\Gamma)}^2 +\varep^{-\frac 1{\beta-1}} \norm{u}_{L^\infty(\Gamma)}^{^\lambda}\Big).
\eeq

Consequently, 
\beq\label{bnd-est}
\gabs{ \langle u, v\rangle } \le \frac 14 \left(\norm{v}^2+ \norm{\nabla v}_{0,\beta}^{\beta}\right)+C_3\Big( 1 + \norm{u}_{L^\infty(\Gamma)}^{^\lambda}\Big)
\eeq
for a constant $C_3>0$.
\end{lemma}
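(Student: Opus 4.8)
The plan is to prove all three estimates by reducing them to a standard trace inequality combined with a Sobolev-type embedding and Young's inequality, being careful about the exponent $\beta\in(1,2)$ defined in \eqref{adef}. First I would recall the classical trace theorem in $W^{1,1}(\Omega)$: for any $w\in W^{1,1}(\Omega)$ one has $\int_\Gamma |w|\,d\sigma \le C(\Omega)\bigl(\norm{w}_{L^1(\Omega)} + \norm{\nabla w}_{L^1(\Omega)}\bigr)$. Applying this to $w=|v|$ (so that $|\nabla w|\le |\nabla v|$ a.e.) gives $\int_\Gamma |v|\,d\sigma \le C(\Omega)\bigl(\norm{v}_{L^1(\Omega)} + \norm{\nabla v}_{L^1(\Omega)}\bigr)$. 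The $L^1$ norm of $v$ is controlled by $\norm{v}$ (the $L^2$ norm) times $|\Omega|^{1/2}$ via Cauchy--Schwarz, which yields the first term $C_1\norm{v}$.

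The heart of part (i) is to convert the $L^1$ norm of the gradient into the $L^\beta$ norm that appears on the right-hand side of \eqref{sec2}, at the cost of a Young-type splitting. By Hölder's inequality, $\norm{\nabla v}_{L^1(\Omega)}\le |\Omega|^{1/\beta'}\norm{\nabla v}_{0,\beta}$ where $\beta'=\beta/(\beta-1)=\lambda$. Then I would apply Young's inequality $ab\le \varep a^\beta + C\varep^{-1/(\beta-1)}b^{\beta'}$ in the form that isolates the full power $\norm{\nabla v}_{0,\beta}^\beta$ with coefficient $\varep$; the conjugate exponent of $\beta$ is exactly $\beta/(\beta-1)=\lambda$, which explains the appearance of the power $-1/(\beta-1)$ on $\varep$ in the residual constant term $C_1\varep^{-1/(\beta-1)}$. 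Collecting these gives precisely \eqref{sec2}. The only mild subtlety here is verifying that $|v|\in W^{1,1}$ legitimately lets one bound $\norm{\nabla |v|}_{L^1}$ by $\norm{\nabla v}_{L^1}$; this is the standard fact that $|\nabla|v||\le|\nabla v|$ a.e.

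For part (ii), I would write $\gabs{\langle u,v\rangle}\le \norm{u}_{L^\infty(\Gamma)}\int_\Gamma |v|\,d\sigma$ and feed in the bound from \eqref{sec2}. This produces terms $\norm{u}_{L^\infty(\Gamma)}\norm{v}$, $\varep\norm{u}_{L^\infty(\Gamma)}\norm{\nabla v}_{0,\beta}^\beta$, and $\norm{u}_{L^\infty(\Gamma)}\varep^{-1/(\beta-1)}$; a further Young's inequality applied to the first product (splitting $\norm{u}_{L^\infty(\Gamma)}\norm{v}$ into $\varep\norm{v}^2 + C\varep^{-1}\norm{u}_{L^\infty(\Gamma)}^2$) produces the $\varep\norm{v}^2$ piece. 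The one point requiring care is matching the exponent $\lambda$ on $\norm{u}_{L^\infty(\Gamma)}$ in the last term: absorbing the factor $\norm{u}_{L^\infty(\Gamma)}$ multiplying $\varep^{-1/(\beta-1)}$ and rebalancing the $\varep$-powers so that the $\norm{\nabla v}_{0,\beta}^\beta$ term carries a clean coefficient $\varep$ forces the residual constant to scale like $\varep^{-1/(\beta-1)}\norm{u}_{L^\infty(\Gamma)}^\lambda$, since $\lambda=\beta/(\beta-1)=1+1/(\beta-1)$. Finally \eqref{bnd-est} follows from \eqref{bnd-eps} by the trivial choice $\varep=\tfrac14$ and bounding $\norm{u}_{L^\infty(\Gamma)}^2\le 1+\norm{u}_{L^\infty(\Gamma)}^\lambda$ (valid since $\lambda>2$), absorbing all $u$-dependence into the single constant $C_3(1+\norm{u}_{L^\infty(\Gamma)}^\lambda)$.

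The main obstacle I anticipate is purely bookkeeping rather than conceptual: keeping the $\varep$-powers and the $\norm{u}_{L^\infty(\Gamma)}$-powers consistent through the two nested applications of Young's inequality so that the final exponents land exactly on $-1/(\beta-1)$ for $\varep$ and $\lambda=\beta/(\beta-1)$ for $\norm{u}_{L^\infty(\Gamma)}$. The relation $\lambda=\beta/(\beta-1)$ from \eqref{adef} is what makes these exponents consistent, and I would flag early that this is the algebraic identity driving the whole computation.
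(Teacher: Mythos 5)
Your proposal is correct and follows essentially the same route as the paper: the $W^{1,1}$ trace inequality, Cauchy--Schwarz to pass from $\norm{v}_{L^1}$ to $\norm{v}$, Hölder plus Young (with conjugate exponents $\beta$ and $\lambda=\beta/(\beta-1)$) to produce the $\varep\norm{\nabla v}_{0,\beta}^{\beta}$ term, and for (ii) the same rescaling of the trace parameter by $\norm{u}_{L^\infty(\Gamma)}^{-1}$ that the paper implements via the choice $\delta=\varep\norm{u}_{L^\infty(\Gamma)}^{-1}$, which is exactly your ``rebalancing'' step and yields the exponent $\lambda=1+\tfrac{1}{\beta-1}$. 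The only omitted detail is the trivial case $\norm{u}_{L^\infty(\Gamma)}=0$, which the paper notes separately.
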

\begin{proof} We recall the trace theorem 
\beqs
\int_{\Gamma} |v| dx \le C_*\int_\Omega |v| dx +C_*\int_\Omega |\nabla v|dx, 
\eeqs
for all $v\in W^{1,1}(\Omega),$ where $C_*$ is a positive constant depending on $\Omega$.  Using Young's inequality, we obtain  \eqref{sec2}.
 
(ii) We have 
\beq
\gabs{ \langle u, v\rangle }
\le \norm{u}_{L^\infty(\Gamma)}\int_{\Gamma} |v|d\sigma.
\eeq
Using \eqref{sec2}  and Young's inequality give  
\beqs
\gabs{ \langle u, v\rangle } \le \norm{u}_{L^\infty(\Gamma)}\Big( \delta\norm{v}^2+ \frac {C_*^2|\Omega|}{4} \delta^{-1} + \delta\norm{\nabla v}_{0,\beta}^{\beta} +C_1\delta^{-\frac 1{\beta-1}}\Big).
\eeqs

If $\norm{u}_{L^\infty(\Gamma)} =0$  then \eqref{bnd-eps} clearly holds true.

Otherwise, selecting $\delta = \varep  \norm{u}_{L^\infty(\Gamma)}^{-1} $ and the fact that $\frac 1{\beta-1} + 1=\lambda$,  we obtain \eqref{bnd-eps}.

Estimate \eqref{bnd-est} follows by choosing $\varep = 1/4$ in \eqref{bnd-eps} and using Young's inequality.  
\end{proof}

We recall a discrete version of Gronwall Lemma in backward difference form, which is useful later. It can be proven without much difficulty by following the ideas of the proof in Gronwall Lemma.
\begin{lemma}\label{DGronwall}
Assume $\ell>0, 1-\ell\Dt>0$ and the nonnegative sequences $\{a_n\}_{n=0}^\infty$, $\{g_n\}_{n=0}^\infty$  satisfying 
\beqs
\frac{a_n-a_{n-1}}{\Dt} - \ell a_n \le g_n, \quad n=1,2,3\ldots
\eeqs
then 
\beq\label{Gineq}
a_n\le (1-\ell \Dt)^{-n} \left(  a_0 +\Dt\sum_{i=1}^{n} (1-\ell \Dt)^{i-1}g_i \right).
\eeq
\end{lemma}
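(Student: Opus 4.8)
The plan is to prove the discrete Gronwall inequality \eqref{Gineq} by induction on $n$, mimicking the classical continuous Gronwall argument but adapted to the backward difference structure. First I would rewrite the hypothesis in a more convenient multiplicative form. The assumption $\frac{a_n-a_{n-1}}{\Dt}-\ell a_n\le g_n$ can be rearranged as $a_{n-1}\ge (1-\ell\Dt)a_n-\Dt\, g_n$, or equivalently, since $1-\ell\Dt>0$, as
\beq\label{dg-rearr}
a_n\le (1-\ell\Dt)^{-1}\bigl(a_{n-1}+\Dt\, g_n\bigr),\quad n\ge 1.
\eeq
This is the one-step estimate from which everything follows.

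The base case $n=1$ is immediate from \eqref{dg-rearr}: it gives $a_1\le (1-\ell\Dt)^{-1}(a_0+\Dt\, g_1)$, which matches \eqref{Gineq} with $n=1$ since the sum has the single term $i=1$ with factor $(1-\ell\Dt)^{0}=1$. For the inductive step, I would assume \eqref{Gineq} holds at level $n-1$ and substitute it into \eqref{dg-rearr}. Carrying out the substitution,
\beq\label{dg-ind}
a_n\le (1-\ell\Dt)^{-1}\Bigl[(1-\ell\Dt)^{-(n-1)}\Bigl(a_0+\Dt\sum_{i=1}^{n-1}(1-\ell\Dt)^{i-1}g_i\Bigr)+\Dt\, g_n\Bigr].
\eeq
Distributing the outer factor $(1-\ell\Dt)^{-1}$ turns $(1-\ell\Dt)^{-(n-1)}$ into $(1-\ell\Dt)^{-n}$ and multiplies the trailing $\Dt\, g_n$ term by $(1-\ell\Dt)^{-1}$. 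I would then rewrite $(1-\ell\Dt)^{-1}\Dt\, g_n$ as $(1-\ell\Dt)^{-n}\cdot\Dt\,(1-\ell\Dt)^{n-1}g_n$, which is exactly the $i=n$ term of the sum in \eqref{Gineq} after factoring out $(1-\ell\Dt)^{-n}$. Collecting terms yields $a_n\le (1-\ell\Dt)^{-n}\bigl(a_0+\Dt\sum_{i=1}^{n}(1-\ell\Dt)^{i-1}g_i\bigr)$, completing the induction.

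The argument is essentially bookkeeping, so there is no deep obstacle; the only point requiring care is the index shift in the geometric weights, namely verifying that absorbing the new $g_n$ contribution produces precisely the weight $(1-\ell\Dt)^{n-1}$ demanded by \eqref{Gineq} and not an off-by-one error. The hypotheses $\ell>0$ and $1-\ell\Dt>0$ are used twice: positivity of $1-\ell\Dt$ licenses dividing through to obtain \eqref{dg-rearr} without reversing the inequality, and it also guarantees that the powers $(1-\ell\Dt)^{-n}$ and $(1-\ell\Dt)^{i-1}$ are well defined and positive, so that the nonnegativity of the sequences $\{a_n\}$ and $\{g_n\}$ is preserved throughout. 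I would close by noting that the result is the exact discrete analogue of multiplying the differential inequality by the integrating factor $e^{-\ell t}$ and integrating, with $(1-\ell\Dt)^{-n}$ playing the role of $e^{\ell t_n}$.
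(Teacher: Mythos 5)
Your proof is correct. It takes a slightly different route from the paper's, though both rest on the same one-step algebra, namely the rearrangement $(1-\ell\Dt)a_n \le a_{n-1}+\Dt\, g_n$. The paper introduces the discrete integrating factor $\bar a_n = (1-\ell\Dt)^n a_n$, observes that the hypothesis yields $\frac{\bar a_n-\bar a_{n-1}}{\Dt}\le (1-\ell\Dt)^{n-1}g_n$, and then sums this telescoping inequality from $1$ to $n$; you instead divide through by $(1-\ell\Dt)$ to get the one-step recursion $a_n\le (1-\ell\Dt)^{-1}(a_{n-1}+\Dt\, g_n)$ and unroll it by induction. The two arguments are mechanically equivalent---your inductive step is the paper's telescoping step in disguise---so neither buys extra generality; the paper's packaging makes the analogy with the continuous integrating factor $e^{-\ell t}$ completely explicit (an analogy you also note at the end), while yours avoids introducing the auxiliary sequence at the cost of an index-shift verification. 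That verification is done correctly: absorbing the new $g_n$ contribution as $(1-\ell\Dt)^{-1}\Dt\, g_n=(1-\ell\Dt)^{-n}\Dt\,(1-\ell\Dt)^{n-1}g_n$ produces exactly the weight $(1-\ell\Dt)^{n-1}$ demanded by \eqref{Gineq}, and your two uses of the hypothesis $1-\ell\Dt>0$ (to divide without reversing the inequality, and to substitute the inductive bound with a positive coefficient) are both legitimate.
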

\begin{proof}  Let $\bar a_n =(1-\ell\Dt)^{n} a_n$. Simple calculation shows  that
\beqs
\frac{\bar a_n -\bar a_{n-1}}{\Dt} = (1-\ell\Dt)^{n-1}\left( \frac{a_n-a_{n-1}}{\Dt} -\ell a_n \right) \le (1-\ell \Dt)^{n-1} g_n.
\eeqs

Summation over $n$ leads to
\beqs
\frac{\bar a_n -\bar a_0}{\Dt} \le \sum_{i=1}^{n} (1-\ell\Dt)^{i-1} g_i,
\eeqs
and hence \eqref{Gineq} holds true.
\end{proof}

{\bf Notations:}   Let $L^2(\Omega)$ be the set of square integrable functions on $\Omega$ and $( L^2(\Omega))^d$ the space of $d$-dimensional vectors with all the components in $L^2(\Omega)$.  We denote $(\cdot, \cdot)$ the inner product in either $L^2(\Omega)$ or $(L^2(\Omega))^d$.  The notation $\norm {\cdot}$ means scalar norm $\norm{\cdot}_{L^2(\Omega)}$ or vector norm $\norm{\cdot}_{(L^2(\Omega))^d}$ and $ \norm{\cdot}_{L^p}=\norm{\cdot}_{L^p(\Omega)}$ represents the standard  Lebesgue norm.
Notation $\norm{\cdot}_{L^p(L^q)} =\norm{\cdot}_{L^p(0,T; L^q(\Omega))}, 1\le p,q<\infty$ means the mixed Lebesgue norm. 

For $1\le q\le \infty$ and $m$ any nonnegative integer, let
$
W^{m,q}(\Omega) = \big\{u\in L^q(\Omega), D^\alpha u\in L^q(\Omega), |\alpha|\le m \big\}
$
denote a Sobolev space endowed with the norm 
$
\norm{u}_{m,q} =
\left( \sum_{|i|\le m} \norm{D^i u}^q_{L^q(\Omega)} \right)^{\frac 1 q}.
$
Define $H^m(\Omega)= W^{m,2}(\Omega)$ with the norm $\norm{\cdot}_m =\norm{\cdot }_{m,2}$. 

Our estimates make use of coefficient-weighted norms. For some strictly positive,
bounded function, we denote the weighted $L^2$-norm by $\norm{f}_\omega$ by 
\beqs
\norm{f}_{\omega}^2 =\int_\Omega \omega |f|^2 dx,  
\eeqs
and if $0<\omega_* \le \omega (x) \le \omega^*$ thoughout $\Omega$, we have the equivalent 
\beqs
\sqrt{\omega_*} \norm{f}\le \norm{f}_{\omega} \le \sqrt{\omega^*}\norm{f}. 
\eeqs
We will also use weighted versions of Cauchy-Schwarz. With such a weight function
$\omega$, we can bound a standard inner product as
\beqs
\intd{f,g} \le \norm{f}_{\omega}\norm{g}_{\omega^{-1}}. 
\eeqs

Throughout this paper, we use short hand notations,  $\norm{\rho(t)} = \norm{ \rho(\cdot, t)}_{L^2(\Omega)},$  
 and $ \rho^0(\cdot) =  \rho(\cdot,0).$ The letters $C, C_0, C_1,C_2\ldots$ represent  positive generic constants.  Their values  depend on exponents, coefficients of polynomial  $g$,  the spatial dimension $d$ and domain $\Omega$, independent of the initial data and boundary data, size of mesh and time step. These constants may be different from place to place.

\section{A mixed finite element approximation}\label{MFEM}
In this section, we will present the mixed weak formulation of the Forchheimer equation. We consider the initial boundary value problem (IVBP) associated with \eqref{utporo}:
\beq\label{rho:eq}
\begin{cases}
 m   +K( |\nabla \rho|)\nabla \rho =0 & \text { in }  \Omega\times (0,T),\\
\phi \rho_t+ \nabla\cdot  m=f & \text { in }  \Omega\times (0,T),\\
m\cdot \nu= \psi(x,t) & \text { in  }\Gamma\times (0,T),\\
\rho(x,0) =\rho^0(x) & \text { in  }\Gamma\times (0,T).  
\end{cases}
\eeq 
where $\nu$ is a outer normal vector of boundary $\Gamma$, $\rho^0(x)$ and $\psi(x,t)$ are smooth functions .  

 Assume that $\phi(x)\in C^1(\Omega)$ and $0<\phi_*<\phi(x)<\phi^*$ for all $x\in\Omega$ then the system \eqref{rho:eq} reduces to the equation form 
 \beqs
 \rho_t - \nabla \cdot  (\phi^{-1} K(|\nabla \rho|)\nabla\rho )+\nabla \phi^{-1} K(|\nabla\rho|)\nabla\rho -f(x,t) =0. 
 \eeqs
 
This equation is a nonlinear degenerate parabolic equation as the density gradient approaches to infinity. The existence and theory of regularity for degenerate parabolic of this type is studied in \cite{IKO, ladyzhenskaiÍ¡a1988linear,Dibenedetto93, Ilyin2002, MR2566733}.   

Define  $\Ro=H^1(\Omega)$ and the space    
$$\M= H({\rm div}, \Omega)= \left\{ m \in  (L^2(\Omega))^d, \nabla \cdot m \in L^2(\Omega) \right\}$$
with the norm defined by $\norm{ m}_{\M}^2 = \norm{m}^2 + \norm{ \nabla \cdot m }^2.$

The variational formulation of \eqref{rho:eq} is defined as follows: 
Find $(m,\rho): I=(0,T)\to  \M \times \Ro$ such that 
\beq\label{weakform}
\begin{split}
 \intd{m, z}+\intd{K(|\nabla\rho|)\nabla\rho, z} =0,  \quad & z\in \M,\\
\intd{\phi\rho_t,r}-\intd{m,\nabla r }=\intd{f,r}-\intb{\psi,r},\quad   & r\in \Ro 
\end{split}
\eeq 
with $\rho(x,0)=\rho^0(x).$ 

\vspace{0.2cm}
 Let $\{\mathcal T_h\}_h$ be a family of quasiuniform triangulations of  $\overline{\Omega}$ with $h$ being the maximum diameter of the mesh elements. Let $\M_h,$ $\mathcal R_h $ be the space of discontinuous piecewise polynomials of degree $k\ge 0$ over $\mathcal T_h$.  Let $\M_h \times \Ro_h$ be the mixed element spaces approximating the space  $\M\times\Ro$.     
  
For density, we use the standard $L^2$-projection operator, see in \cite{Ciarlet78},  
$\pi: \Ro \to \Ro_h$,  satisfying
\beqs
( \pi \rho -\rho, r ) = 0 ,\quad \rho\in \Ro,  \forall r\in \Ro_h.
\eeqs 

This projection has well-known approximation properties, e.g.~\cite{BF91,JT81,BPS02}.
\begin{itemize}
\item[i.]  For all $\rho\in H^s(\Omega), s\in \{0,1\}$, there  is a positive constant $C_0$ such that
\beq\label{L2proj0}
\norm{\pi \rho}_{s}\le C_0 \norm{\rho}_{s}.   
\eeq

\item[ii.] There exists a positive constant $C_1$ such that for all $ \rho \in W^{s,q}(\Omega)$,
\beq\label{prjpi}
\norm{\pi \rho - \rho }_{0,q} \leq C_1 h^s \norm{\rho}_{s,q}, \quad   0\le s \le k+1, 1\le q \le \infty.
\eeq
When $q=2$, in short hand we write \eqref{prjpi} as   
\beqs
\norm{\pi \rho - \rho } \leq C_1 h^s \norm{\rho}_{s} . 
\eeqs
\end{itemize}

 The semidiscrete formulation of~\eqref{weakform} can read as follows: Find a pair $(m_h,\rho_h): I \to \M_h \times \Ro_h $ such that 
\beq\label{semidiscreteform}
\begin{split}
 \intd{m_h, z}+\intd{K(|\nabla\rho_h|)\nabla\rho_h, z} =0, \quad & \forall z\in \M_h,\\
\intd{\phi\rho_{h,t},r}-\intd{m_h,\nabla r }=\intd{f,r}-\intb{\psi,r} , \quad & \forall r\in \Ro_h 
\end{split}
\eeq 
with initial data $\rho_h^0=\pi \rho^0(x)$.

 Let $\{t_i\}_{i=1}^N$ be the uniform partition of $[0,T]$ with $t_i=i\Dt$, for time step $\Dt >0$. We define $\varphi^i =\varphi(\cdot, t_i) $.  The discrete time mixed finite element approximation to \eqref{weakform} is defined as follows:  
For given $\rho_h^0(x)=\pi \rho^0(x)$ and $\left\{f^i\right\}_{i=1}^N\in L^2(\Omega), \left\{\psi^i\right\}_{i=1}^N\in L^\infty(\bar\Omega)  $. Find a pair $ (m_h^i, \rho_h^i )$ in  $\M_h \times \Ro_h $, $i=1, 2,\ldots, N$  such that 
\beq\label{fullydiscreteform}
\begin{split}
  \intd{m_h^i, z}+\intd{K(|\nabla\rho_h^i|)\nabla\rho_h^i, z} =0,\quad &\forall z\in \M_h,\\
\intd{\phi\frac{ \rho_h^i - \rho_h^{i-1}}{t\Delta t },r}-\intd{m_h^i,\nabla r }=\intd{f^i,r}-\intb{\psi^i,r} , \quad &\forall r\in \Ro_h. 
\end{split}
\eeq

\section{Stability of semidiscrete approximation}\label{Bsec}
We study the  equations \eqref{weakform}, and \eqref{semidiscreteform} with fixed functions $g(s)$ in \eqref{eq1} and \eqref{eq2}. 
Therefore, the exponents $\alpha_i$ and coefficients $a_i$ are all fixed, and so are the functions $K(\xi)$, $H(\xi)$  in \eqref{Kdef}, \eqref{Hxi}.  

With the properties \eqref{i:ineq1}, \eqref{i:ineq2}, \eqref{i:ineq3}, the monotonicity \eqref{monotone0}, and by
classical theory of monotone operators in \cite{MR0259693,s97,z90}, the authors in \cite {HIKS1, galusinski2008nonlinear} proved the global existence and uniqueness of the weak solution of the equation \eqref{weakform}. For  the { \it priori } estimates, we assume that the weak solution is a sufficient regularity in both  $x$ and $t$ variables. 

\begin{theorem}\label{bound-lq} Let $(\rho_h, m_h)$ be a solution to the problem \eqref{semidiscreteform}. Then, there exists  a positive constant $C$ such that
\begin{align}
\label{res1a}
&\norm{\rho_h}_{L^\infty(I;L^2(\Omega))}^2 +\norm{\nabla \rho_h}_{L^\beta(I;L^\beta(\Omega))}^{\beta} \le  C\left(\norm{\rho^0}^2+ \mathcal A \right);&\\
\label{gradrho-m}
 &\norm{\rho_{h,t}}_{L^2(I;L^2(\Omega))}^2 +\norm{\nabla \rho_h}_{L^\infty(I;L^\beta(\Omega))}^{\beta} + \norm{m_h}_{L^\infty(I;L^2(\Omega))} \le C\mathcal B,\\
\text{ where } \qquad
\label{Adef}
&\mathcal A=1+\norm{\psi(t)}_{L^\infty(I; L^\infty(\Gamma))}^{\lambda } + \norm{f(t)}_{L^2(I;L^2(\Omega))}^2,\\
\label{Bdef}
&\mathcal B=\norm{\rho^0}^2+ \norm{\nabla \rho^0}_{0,\beta}^{\beta} +\norm{\psi(0)}_{L^2(\Gamma)} \norm{\rho^0 }_{L^2(\Gamma)}+ \norm{\psi_t}_{L^\infty(I,L^\infty(\Gamma))}^{\lambda}
+ \mathcal A.
\end{align} 
\end{theorem}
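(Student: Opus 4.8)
The plan is to establish the two bounds through separate energy arguments, in both cases testing the semidiscrete system \eqref{semidiscreteform} against carefully chosen functions and then exploiting the degeneracy estimates of Lemma~1 together with the trace inequality of Lemma~\ref{traceest}. Note first that since $\Ro_h$ and $\M_h$ consist of (discontinuous) piecewise polynomials, the gradients $\nabla\rho_h$ and $\nabla\rho_{h,t}$ both belong to $\M_h$ and are therefore admissible test functions in the momentum equation.

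For \eqref{res1a} I would test the momentum equation with $z=\nabla\rho_h$, giving $(m_h,\nabla\rho_h)=-\int_\Omega K(|\nabla\rho_h|)|\nabla\rho_h|^2\,dx$, and the mass equation with $r=\rho_h$. Adding the two and using that $\phi$ is time-independent, so that $(\phi\rho_{h,t},\rho_h)=\tfrac12\tfrac{d}{dt}\norm{\rho_h}_\phi^2$, yields the energy identity
$$\tfrac12\tfrac{d}{dt}\norm{\rho_h}_\phi^2+\int_\Omega K(|\nabla\rho_h|)|\nabla\rho_h|^2\,dx=(f,\rho_h)-\langle\psi,\rho_h\rangle.$$
The lower bound \eqref{iineq2} converts the dissipation term into $c_2(\norm{\nabla\rho_h}_{0,\beta}^\beta-|\Omega|)$ because $\beta=2-a$; Young's inequality controls $(f,\rho_h)$; and the trace estimate \eqref{bnd-eps}, with the parameter $\varepsilon$ taken small enough relative to $c_2$, bounds $|\langle\psi,\rho_h\rangle|$ by a term that can be absorbed into $\norm{\nabla\rho_h}_{0,\beta}^\beta$ plus $C\norm{\psi}_{L^\infty(\Gamma)}^\lambda$. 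Absorbing the gradient contribution on the left, applying Lemma~\ref{DGronwall} (continuous Gronwall) to the resulting differential inequality, and integrating in time then produces \eqref{res1a} with the quantity $\mathcal A$.

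For \eqref{gradrho-m} I would instead test the mass equation with $r=\rho_{h,t}$ and the momentum equation with $z=\nabla\rho_{h,t}$. The essential device is the potential $H$ of \eqref{Hxi}: a direct computation gives $\tfrac{d}{dt}H(|\nabla\rho_h|)=2K(|\nabla\rho_h|)\nabla\rho_h\cdot\nabla\rho_{h,t}$, hence $(m_h,\nabla\rho_{h,t})=-\tfrac12\tfrac{d}{dt}\int_\Omega H(|\nabla\rho_h|)\,dx$, and one arrives at
$$\norm{\rho_{h,t}}_\phi^2+\tfrac12\tfrac{d}{dt}\int_\Omega H(|\nabla\rho_h|)\,dx=(f,\rho_{h,t})-\langle\psi,\rho_{h,t}\rangle.$$
I would integrate this over $(0,t)$, absorbing part of $\norm{\rho_{h,t}}_\phi^2$ from the $(f,\rho_{h,t})$ term by Young's inequality, and use \eqref{Hcompare} to bound $\int_\Omega H(|\nabla\rho_h(t)|)\,dx$ below by $c_4(\norm{\nabla\rho_h(t)}_{0,\beta}^\beta-|\Omega|)$ and, at $t=0$, above by $c_5\norm{\nabla\rho_h^0}_{0,\beta}^\beta$, which is controlled by $\norm{\nabla\rho^0}_{0,\beta}^\beta$ through the stability \eqref{L2proj0} of the projection.

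The main obstacle is the boundary term $\int_0^t\langle\psi,\rho_{h,t}\rangle\,ds$, which cannot be estimated directly because of the time derivative on $\rho_h$. I would integrate it by parts in time,
$$\int_0^t\langle\psi,\rho_{h,t}\rangle\,ds=\langle\psi(t),\rho_h(t)\rangle-\langle\psi(0),\rho_h^0\rangle-\int_0^t\langle\psi_t,\rho_h\rangle\,ds,$$
which is exactly what produces the terms $\norm{\psi(0)}_{L^2(\Gamma)}\norm{\rho^0}_{L^2(\Gamma)}$ and $\norm{\psi_t}_{L^\infty(I,L^\infty(\Gamma))}^\lambda$ appearing in $\mathcal B$. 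The end-point term $\langle\psi(t),\rho_h(t)\rangle$ and the accumulated term $\int_0^t\langle\psi_t,\rho_h\rangle\,ds$ are each bounded by the trace inequality \eqref{bnd-eps}; the resulting contribution in $\norm{\nabla\rho_h(t)}_{0,\beta}^\beta$ is absorbed into the $H$-term on the left, while the time integrals $\int_0^t(\norm{\rho_h}^2+\norm{\nabla\rho_h}_{0,\beta}^\beta)\,ds$ are already controlled by the first estimate \eqref{res1a}. This gives the bounds on $\norm{\rho_{h,t}}_{L^2(I;L^2)}$ and $\norm{\nabla\rho_h}_{L^\infty(I;L^\beta)}$. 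Finally, testing the momentum equation with $z=m_h$ gives $\norm{m_h}\le\norm{K(|\nabla\rho_h|)\nabla\rho_h}$, and since $(K(\xi)\xi)^2\le c_3^2\,\xi^{2-2a}\le C(1+\xi^\beta)$ by \eqref{i:ineq2} (using $2-2a<\beta$), one obtains $\norm{m_h}^2\le C(|\Omega|+\norm{\nabla\rho_h}_{0,\beta}^\beta)$, which is already bounded in $L^\infty(I)$, completing \eqref{gradrho-m}.
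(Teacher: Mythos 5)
Your proposal is correct and follows essentially the same path as the paper: testing with $(z,r)=(\nabla\rho_h,\rho_h)$ plus the trace/Young absorption and Gronwall for \eqref{res1a}; testing with $(z,r)=(\nabla\rho_{h,t},\rho_{h,t})$, using the potential $H$, moving the time derivative off $\rho_h$ in the boundary term, and absorbing via \eqref{bnd-eps} and \eqref{Hcompare} for \eqref{gradrho-m}; and testing with $z=m_h$ for the momentum bound. The only cosmetic differences are that the paper packages your explicit integration by parts in time into the functional $\mathcal E(t)=\int_\Omega H\,dx+\norm{\rho_h}_\phi^2+2\intb{\psi,\rho_h}$, and that your citation of Lemma~\ref{DGronwall} should be to the classical (continuous) Gronwall inequality rather than the discrete one.
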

\begin{proof}
Choosing $z=\nabla \rho_h$ and $r=\rho_h$ in \eqref{semidiscreteform}, and adding the resultants, we find that  
\beq\label{term0}
\intd{\phi\rho_{h,t},\rho_h} +\intd{K(|\nabla\rho_h|)\nabla\rho_h, \nabla \rho_h} = \intd{f,\rho_h}-\intb{\psi,\rho_h}. 
\eeq
The second term of the LHS in \eqref{term0} is treated by using \eqref{i:ineq2} as follows
\beq\label{term1}
\intd{ K(|\nabla \rho_h|)|\nabla \rho_h|,\nabla \rho_h|} dx\ge c_0\int_\Omega (|\nabla \rho_h|^{\beta}-1)dx = c_0 \norm{\nabla \rho_h}_{0,\beta}^{\beta}-c_0|\Omega|. 
\eeq
We use Young's inequality and \eqref{sec2} to obtain 
\beq\label{term2}
\intd{f, \rho_h}- \intb{\psi, \rho_h }
\le \frac 12 \norm{\rho_h}^2 +\frac 12 \norm{f}^2+\norm{\psi}_{L^\infty(\Gamma)}\Big\{ C_1\norm{\rho_h} + \varep\norm{\nabla \rho_h}_{0,\beta}^{\beta} +C_1\varep^{-\frac 1{\beta-1}}   \Big\}.
\eeq
In view of \eqref{term0}, \eqref{term1} and \eqref{term2}, and selecting $\varep=\frac{c_0}{2}  ( \norm{\psi}_{L^\infty(\Gamma)}+1)^{-1}$, \eqref{term0} becomes
\beq \label{r:est}
\begin{split}
\frac{d }{dt}\norm{\rho_h}_{\phi}^2 +\frac{c_0} 2\norm{\nabla \rho_h}_{0,\beta}^{\beta}&\le C\norm{\psi}_{L^\infty(\Gamma)}\Big(\norm{\rho_h} + \left(\norm{\psi}_{L^\infty(\Gamma)}+1\right)^{\frac 1{\beta-1}}\Big) +C\Big(1+\norm{\rho_h}^2 +\norm{f}^2\Big)\\
&\le  C\norm{\rho_h}_{\phi}^2  + C\Big(1+\norm{\psi}_{L^\infty(\Gamma)}^{\frac{\beta}{\beta-1} } + \norm{f}^2\Big).
\end{split}
\eeq
Solving this differential inequality leads to 
\beq\label{res0}
\norm{\rho_h}_{\phi}^2 +\frac{c_0} 2 \int_0^t\norm{\nabla \rho_h}_{0,\beta}^{\beta}d\tau \le  \norm{\rho_{h}^0}_{\phi}^2+ C\int_0^t \Big(1+\norm{\psi}_{L^\infty(\Gamma)}^{\frac{\beta}{\beta-1} } + \norm{f}^2\Big) d\tau,
\eeq
which implies
\beqs
\norm{\rho_h}^2 +\int_0^t\norm{\nabla \rho_h}_{0,\beta}^{\beta}d\tau \le  C\norm{\rho_{h}^0}^2+ C\int_0^t \Big(1+\norm{\psi}_{L^\infty(\Gamma)}^{\frac{\beta}{\beta-1} } + \norm{f}^2\Big) d\tau.
\eeqs
Note that  
$
\norm{\rho^0_h}= \norm{\pi\rho^0}\le \norm{\rho^0}.
$ 
Thus inequality \eqref{res1a} holds.

(ii) Choosing $z=\nabla \rho_{h,t}$ and $r=\rho_{h,t}$ in \eqref{semidiscreteform}, and adding the resulting equations, we obtain    
\beq\label{Diffineq1}
 \norm{\rho_{h,t} }_{\phi}^2+\frac 1 2\ddt \int_\Omega H(x,t) dx =\intd{f,\rho_{h,t}}-\intb{\psi,\rho_{h,t}}=\intd{f,\rho_{h,t}}-\ddt \intb{\psi,\rho_h}+\intb{\psi_t,\rho_h}, 
\eeq
where $H(x,t) = H(|\nabla \rho_h(x,t)|)$.
Let
\beqs
\mathcal E(t) = \int_\Omega H(x,t) dx +\norm{\rho_h}_{\phi}^2 +2\intb{\psi, \rho_h}.
\eeqs
Adding \eqref{Diffineq1} and \eqref{r:est} gives  
 \beqs
\begin{aligned}
\norm{\rho_{h,t} }_{\phi}^2  +\frac{c_0}2\norm{\nabla \rho_h}_{0,\beta}^{\beta}+ \frac 12\ddt \mathcal E(t) \le  (f, \rho_{h,t})+\intb{ \psi_t, \rho_h} + C\norm{\rho_h}_{\phi}^2 +  C\Big(1+\norm{\psi}_{L^\infty(\Gamma)}^{\frac{\beta}{\beta-1} } + \norm{f}^2\Big).
\end{aligned}
\eeqs
Using \eqref{bnd-est} and Young's inequality leads to  
  \begin{align*}
\frac 12 \norm{\rho_{h,t} }_{\phi}^2 +\frac{c_0}{2} \norm{\nabla \rho_h}_{0,\beta}^{\beta}+ \frac 12 \ddt \mathcal E(t) &\le \frac 14 \left(\norm{\rho_h}^2+ \norm{\nabla \rho_h}_{0,\beta}^{\beta}\right)
+ C\left(1+ \norm{\psi_t}_{L^\infty(\Gamma)}^{^\lambda}  \right) \\
&+\frac 12 \norm{f}_{\phi^{-1}}^2  + C\norm{\rho_h}^2 + C\Big(1+\norm{\psi}_{L^\infty(\Gamma)}^{\lambda } + \norm{f}^2\Big).
\end{align*}
Integrating in time gives
\begin{align*}
\int_0^T (\norm{\rho_{h,t} }_{\phi}^2  +c_0 \norm{\nabla \rho_h}_{0,\beta}^{\beta}) dt+ \mathcal E(t) &\le C \int_0^T \left(\norm{\rho_h}^2+ \norm{\nabla \rho_h}_{0,\beta}^{\beta}\right) dt\\
 &+  C\int_0^T\Big(1+\norm{\psi}_{L^\infty(\Gamma)}^{\lambda }+\norm{\psi_t}_{L^\infty(\Gamma)}^{^\lambda} + \norm{f}^2\Big)dt+ \mathcal E(0).
\end{align*}
Then using \eqref{res1a}, we obtain
  \beq\label{Diffineq3}
\int_0^T (\norm{\rho_{h,t} }_{\phi}^2  +c_0 \norm{\nabla \rho_h}_{0,\beta}^{\beta}) dt+\int_\Omega H(x,t) dx +\norm{\rho_h}_{\phi}^2
\le -2\langle \psi, \rho_h\rangle+C\left(\norm{\psi_t}_{L^\infty(I,L^\infty(\Gamma))}^{\lambda}+ \mathcal A\right) +C\norm{\rho^0}^2+ \mathcal E(0).
\eeq
Applying \eqref{bnd-eps} to the first term of the RHS in \eqref{Diffineq3} and using the fact  that $c_3 (|\nabla\rho_h|^{\beta}-1) \le H(x,t)   \le 2c_2|\nabla\rho_h|^{\beta}$,  we have   
 \beq\label{Diffineq4}
\begin{aligned}
\int_0^T \norm{\rho_{h,t} }_{\phi}^2 dt +c_0 \int_0^T\norm{\nabla \rho_h}_{0,\beta}^{\beta} dt+  c_3\norm{\nabla \rho_h}_{0,\beta}^{\beta} +\norm{\rho_h}_{\phi}^2
\le 2\varep\left( \norm{\rho_h}^2+ \norm{\nabla \rho_h}_{0,\beta}^{\beta}\right)\\
+ C\left(\varep^{-1}\norm{\psi}_{L^\infty(\Gamma)}+ \varep^{-\frac 1{\beta-1}} \norm{\psi}_{L^\infty(\Gamma)}^{\lambda}  \right) +C\left(\norm{\psi_t}_{L^\infty(I,L^\infty(\Gamma))}^{\lambda}+ \mathcal A\right) +C\norm{\rho^0}^2+ \mathcal E(0).
\end{aligned}
\eeq
Then taking $\varep =\min\{c_3,1\}/4$ and using Young's inequality, \eqref{Diffineq4} leads to 
 \beq\label{ineq4}
\int_0^T \norm{\rho_{h,t} }_{\phi}^2 dt +c_0 \int_0^T \norm{\nabla \rho_h}_{0,\beta}^{\beta} dt+ \frac{c_3}2 \norm{\nabla \rho_h}_{0,\beta}^{\beta} +\frac 12 \norm{\rho_h}_{\phi}^2 \le C\left(\norm{\psi_t}_{L^\infty(I,L^\infty(\Gamma))}^{\lambda}+ \mathcal A\right) +C\norm{\rho^0}^2+ \mathcal E(0).
\eeq
Note that 
\beq\label{E0}
 \mathcal E(0) \le C\left(\norm{\rho^0}^2+ \norm{\nabla \rho^0}_{0,\beta}^{\beta} +\norm{\psi(0)}_{L^2(\Gamma)} \norm{\rho^0 }_{L^2(\Gamma)}\right).
\eeq

Putting estimates \eqref{ineq4} and \eqref{E0} together, we obtain the first part of \eqref{gradrho-m}.    

Now choosing $z=m_h$ in the first equation of \eqref{semidiscreteform} gives
$
\norm{m_h}^2 +\intd{K(|\nabla\rho_h|)\nabla\rho_h, m_h} =0,
$
which leads to 
\beq\label{mm}
\norm{m_h} \le \norm { K(|\nabla\rho_h|)\nabla\rho_h} \le C\Big( \int_\Omega K(|\nabla\rho_h|)|\nabla\rho_h|^2 dx \Big)^{1/2} \le C \norm{\nabla \rho_h}_{0,\beta}^{\beta}.
\eeq
This, \eqref{ineq4} and \eqref{E0} imply the second part of \eqref{gradrho-m}. The proof is complete. 
\end{proof}

\begin{remark}
 The equation \eqref{semidiscreteform} can be interpreted as the finite system of ordinary differential equations in the coefficients of $(m_h, \rho_h)$ with respect to basis of $\M_h\times \Ro_h$. The stability estimates \eqref{res1a} and \eqref{gradrho-m} suffice to establish the local existence of $(m_h(t),\rho_h(t))$ for all $t\in (0,T).$   
\end{remark}

The uniqueness of the approximation solution comes from the monotonicity of  the operator, see in \cite{HI1}. In fact, assume that for $i=1,2$, $\{m_{h,i}, \rho_{h,i}\}$ are two solutions of \eqref{semidiscreteform}.  Let 
  $  \mu= m_{h,1} - m_{h,2},  \varrho = \rho_{h,1} - \rho_{h,2}.  $  Then
  \beq\label{DiffErr}
\begin{split}
 \intd{\mu, z}+\intd{K(|\nabla\rho_{h,1}|)\nabla\rho_{h,1}-K(|\nabla\rho_{h,2}|)\nabla\rho_{h,2}, z}  =0,\quad z\in \M_h,\\
\intd{\phi \varrho_t,r}-\intd{\mu,\nabla r }=0, \quad r\in \Ro_h. 
\end{split}
\eeq

  It is easily to see that with $z=\nabla \varrho$ and $r=\varrho$ in \eqref{DiffErr}
    \beqs
\intd{\phi \varrho_t,\varrho} +\intd{K(|\nabla\rho_{h,1}|)\nabla\rho_{h,1}-K(|\nabla\rho_{h,2}|)\nabla\rho_{h,2}, \nabla \varrho} =0. 
\eeqs
Thanks to the monotonicity \eqref{monotone0}, we see that
\beq\label{a}
\frac 1 2\ddt \norm{\varrho}_{\phi}^2 +C \int_\Omega \Big(\int_0^1 K(|\gamma(s)| ds )\Big) |\nabla \varrho|^2 dx  \le 0. 
\eeq 

Choosing $z=\mu$ in the first equation of \eqref{DiffErr}  and using the fact  the function $K(\cdot)\le a_0^{-1}$ lead to
\beq\label{b}
\norm{\mu}^2 \le C \int_\Omega \Big(\int_0^1 K(|\gamma(s)| ds )\Big)^2 |\nabla \varrho|^2 dx \le  \int_\Omega \Big(\int_0^1 K(|\gamma(s)| ds )\Big) |\nabla \varrho|^2 dx .  
\eeq
Putting \eqref{b} into \eqref{a} gives  
\beq
\frac 1 2\ddt \norm{\varrho}_{\phi}^2 +\norm{\mu}^2 \le 0.
\eeq
This implies  
$
\norm{\varrho}_{\phi}^2 +\norm{\mu}^2 \le C\norm{\varrho(0)}_{\phi}^2 =0. 
$
Hence $\varrho=0$ and $\mu=0$ a.e.    
\begin{theorem} Let $0<t_*<T$. Suppose $(m_h,\rho_h)$ be a solution of the problem \eqref{semidiscreteform}. There exists  a positive constant $C$ such that for all $t\in[t_*, T]$, 
\beq\label{Est4rhot}
 \norm{\rho_{h,t}(t)}^2 \le  C (1+t_*^{-1}) \mathcal B 
 +C\int_{0}^T \left(1+ \norm{\psi_t(\tau)}_{L^\infty(\Gamma)}\right)^{2\lambda}\left(1+\norm{f_t(\tau)}\right)^2 d\tau,
  \eeq
  where $\Bb$ is defined in \eqref{Bdef}. 
\end{theorem}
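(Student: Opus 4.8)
The plan is to differentiate the semidiscrete system \eqref{semidiscreteform} in time and run an energy estimate on $\rho_{h,t}$, then recover the $t_*^{-1}$ blow-up rate by a time-weighting argument that exploits the already established bound $\norm{\rho_{h,t}}_{L^2(I;L^2(\Omega))}^2\le C\Bb$ from \eqref{gradrho-m}. Since $(m_h,\rho_h)$ solves a smooth system of ODEs in the coefficients, differentiation in $t$ is legitimate. Writing $B(\xi)=K(|\xi|)\xi$ and differentiating the two equations of \eqref{semidiscreteform} gives $\intd{m_{h,t},z}+\intd{B'(\nabla\rho_h)\nabla\rho_{h,t},z}=0$ and $\intd{\phi\rho_{h,tt},r}-\intd{m_{h,t},\nabla r}=\intd{f_t,r}-\intb{\psi_t,r}$, where $B'$ denotes the Jacobian of the vector field $B$. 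Testing the first with $z=\nabla\rho_{h,t}\in\M_h$ and the second with $r=\rho_{h,t}\in\Ro_h$ and adding cancels the $m_{h,t}$ terms and yields
\beqs
\frac12\ddt\norm{\rho_{h,t}}_\phi^2+\intd{B'(\nabla\rho_h)\nabla\rho_{h,t},\nabla\rho_{h,t}}=\intd{f_t,\rho_{h,t}}-\intb{\psi_t,\rho_{h,t}}.
\eeqs
The infinitesimal form of the monotonicity \eqref{monotone0}, namely the pointwise bound $B'(\xi)\eta\cdot\eta\ge(1-a)K(|\xi|)|\eta|^2$ which follows directly from $K'(\xi)\xi\ge-aK(\xi)$ in \eqref{i:ineq3}, gives the degenerate coercivity $\intd{B'(\nabla\rho_h)\nabla\rho_{h,t},\nabla\rho_{h,t}}\ge(1-a)\int_\Omega K(|\nabla\rho_h|)|\nabla\rho_{h,t}|^2\,dx=:(1-a)D(t)$.

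The second step is to bound the right-hand side. The term $\intd{f_t,\rho_{h,t}}$ is controlled by Young's inequality as $\frac12\norm{f_t}^2+\frac12\norm{\rho_{h,t}}^2$. For the boundary term I would invoke the trace estimate of Lemma~\ref{traceest}, which produces a factor $\norm{\nabla\rho_{h,t}}_{0,\beta}^{\beta}$. The essential difficulty is that the only coercivity available is the \emph{degenerate} weighted quantity $D(t)$, not a full norm of $\rho_{h,t}$, so this $L^\beta$ gradient term cannot be absorbed directly. To bridge this gap I would use the lower bound $K(|\nabla\rho_h|)\ge c_0(1+|\nabla\rho_h|)^{-a}$ from \eqref{i:ineq1} together with Hölder's inequality (conjugate exponents $2/\beta$ and $2/a$, recalling $\beta=2-a$) to obtain
\beqs
\norm{\nabla\rho_{h,t}}_{0,\beta}^{\beta}\le C\,D(t)^{\beta/2}\Big(\norm{\nabla\rho_h}_{0,\beta}^{\beta}+|\Omega|\Big)^{a/2}.
\eeqs
Because $\norm{\nabla\rho_h}_{L^\infty(I;L^\beta)}^{\beta}\le C\Bb$ by \eqref{gradrho-m}, the last factor is a fixed constant, and since $\beta/2<1$ a further Young's inequality lets me absorb $C\,D(t)^{\beta/2}$ into $\tfrac{1-a}{2}D(t)$ at the expense of data terms. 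Collecting all the data contributions and bounding any sum of the resulting powers of $\norm{\psi_t}_{L^\infty(\Gamma)}$ and $\norm{f_t}$ crudely by the single product, I arrive, after discarding the remaining nonnegative coercive term, at the differential inequality
\beqs
\ddt\norm{\rho_{h,t}}_\phi^2\le C\norm{\rho_{h,t}}^2+C\big(1+\norm{\psi_t}_{L^\infty(\Gamma)}\big)^{2\lambda}\big(1+\norm{f_t}\big)^2=:C\norm{\rho_{h,t}}^2+\Phi(t).
\eeqs

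The final step produces the $t_*^{-1}$ factor by time-weighting. Multiplying by $t$ and using the product rule gives $\ddt\big(t\norm{\rho_{h,t}}_\phi^2\big)\le(1+Ct)\norm{\rho_{h,t}}^2+t\,\Phi(t)$; integrating from $0$ to $t$ and using $1+Ct\le 1+CT$ yields $t\norm{\rho_{h,t}(t)}_\phi^2\le C\int_0^t\norm{\rho_{h,t}}^2\,d\tau+\int_0^t\tau\,\Phi(\tau)\,d\tau$. Dividing by $t\ge t_*$, estimating $\int_0^t\norm{\rho_{h,t}}^2\,d\tau\le\int_0^T\norm{\rho_{h,t}}^2\,d\tau\le C\Bb$ via \eqref{gradrho-m}, and crucially using $\tau/t\le 1$ so that $\tfrac1t\int_0^t\tau\,\Phi\,d\tau\le\int_0^T\Phi\,d\tau$, I obtain $\norm{\rho_{h,t}(t)}^2\le C(1+t_*^{-1})\Bb+C\int_0^T\Phi\,d\tau$, which is exactly \eqref{Est4rhot} after using the norm equivalence $\phi_*\norm{\cdot}^2\le\norm{\cdot}_\phi^2\le\phi^*\norm{\cdot}^2$. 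I expect the main obstacle to be the absorption step of the second paragraph: the coercivity generated by differentiating the degenerate operator is only the $K$-weighted $L^2$ quantity $D(t)$, whereas the boundary data pairs naturally against the $L^\beta$ norm of $\nabla\rho_{h,t}$, so the degeneracy must be compensated by the Hölder interpolation against the a priori $L^\beta$ bound on $\nabla\rho_h$. A secondary subtlety is that the $\norm{\rho_{h,t}}^2$ term on the right is not controlled by $D(t)$, so the estimate is closed not by Gronwall-type coercivity but by combining the time weight with the known integral bound on $\rho_{h,t}$.
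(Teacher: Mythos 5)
Your proposal is correct and follows essentially the same route as the paper's proof: differentiate the semidiscrete system in time, test with $(\nabla\rho_{h,t},\rho_{h,t})$, obtain the degenerate weighted coercivity $(1-a)\norm{K^{1/2}(|\nabla\rho_h|)\nabla\rho_{h,t}}^2$ from \eqref{i:ineq3}, compensate the degeneracy in the boundary term via $K^{-1}\le C(1+|\nabla\rho_h|^{a})$ together with the a priori bound \eqref{gradrho-m}, and recover the $t_*^{-1}$ factor from $\int_0^T\norm{\rho_{h,t}}^2\,dt\le C\Bb$. Your only departures are technical variants of the same ideas: you absorb the trace term through an integral H\"older interpolation with exponents $2/\beta$ and $2/a$ where the paper uses a pointwise $K$-weighted Young inequality on $\norm{\nabla\rho_{h,t}}_{L^1}$, and you localize in time by multiplying the differential inequality by $t$ where the paper integrates from $t'$ to $t$ and then averages $t'$ over $(0,t_*)$ --- both devices yield the same conclusion.
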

\begin{proof}
Taking time derivative \eqref{semidiscreteform}, choosing $z=\nabla \rho_{h,t}$ and $r=\rho_{h,t}$ in \eqref{semidiscreteform}, we obtain the equations 
\beq
\begin{split}
 \intd{m_{h,t}, \nabla \rho_{h,t}}+\intd{ K'(|\nabla\rho_h|) \frac{\nabla\rho_h\cdot \nabla\rho_{h,t}}{|\nabla\rho_h|} \nabla\rho_h  + K(|\nabla\rho_h|)\nabla\rho_{h,t}, \nabla \rho_{h,t}} =0,\\
\intd{\phi\rho_{h,tt},\rho_{h,t}}-\intd{m_{h,t},\nabla \rho_{h,t} }=\intd{f_t,\rho_{h,t}}-\intb{\psi_t,\rho_{h,t}} . 
\end{split}
\eeq 
Adding these equations yields 
    \beq\label{first-ineq}
  \frac{1}{2}\ddt \norm{\rho_{h,t}}^2 +\norm{K^{1/2}(|\nabla \rho_h|)\nabla \rho_{h,t}}^2 =- \intd{K'(|\nabla \rho_h|)\frac{\nabla \rho_{h}\cdot \nabla \rho_{h,t}}{|\nabla \rho_h|}\nabla \rho_h ,\nabla \rho_{h,t}   } + \intd{ f_t, \rho_{h,t} }-\intb{\psi_t, \rho_{h,t}}.
  \eeq
  According to \eqref{i:ineq3}, 
  \beq\label{I1}
  \left|- \intd{K'(|\nabla \rho_h|)\frac{\nabla \rho_{h}\cdot \nabla \rho_{h,t}}{|\nabla \rho_h|}\nabla \rho_h ,\nabla \rho_{h,t}   }\right|\le a\norm{K^{1/2}(|\nabla \rho_h|)\nabla \rho_{h,t}}^2.
  \eeq
  The inequality \eqref{first-ineq} deduces  to  
  \beq\label{midstep0}
  \frac{1}{2}\ddt \norm{\rho_{h,t}}^2 +(1-a)\norm{K(|\nabla \rho_h|)\nabla \rho_{h,t}}^2\le \intd{ f_t, \rho_{h,t} }-\intb{\psi_t, \rho_{h,t}}.
  \eeq
 In virtue of Young's inequality,  for all $\varep>0$
\beq\label{I2}
( f_t,\rho_{h,t} ) \le \varep\norm{\rho_{h,t}}^2 + C\varep^{-1}\norm{f_t}^2.
\eeq
Using Trace Theorem we obtain,
  \beq\label{I30}
  \left|\intb{\psi_t, \rho_{h,t}}\right|\le \norm{\psi_t}_{L^\infty(\Gamma)}\left[ \intd{|\rho_{h,t}| ,1}+\intd{|\nabla \rho_{h,t}|,1 }\right].
  \eeq
  Again Young's inequality gives  
   \beq\label{Yt2}
   (|\rho_{h,t}|,1) \le \varep\norm{\rho_{h,t}}^2+C\varep^{-1}, 
 \quad  
 (|\nabla \rho_{h,t}|,1) \le  \varep_1 ( K(|\nabla \rho_h|)|\nabla \rho_{h,t}|^2,1)+C\varep_1^{-1} ( K^{-1}(|\nabla \rho_h|),1). 
 \eeq
By using \eqref{i:ineq1} and $(1+x)^a \le 1+x^a, x\ge 0$ imply   
 \beq\label{Yt1}
(|\nabla \rho_{h,t}|,1) 
   \le  \varep_1 \norm{K^{1/2}(|\nabla \rho_h|)\nabla \rho_{h,t}}^2+C\varep_1^{-1}(1+\norm{\nabla \rho_h}_{0,\beta}^{a}). 
 \eeq
 In view of \eqref{Yt2} and \eqref{Yt1}, \eqref{I30} becomes   
 \beq\label{I3}
 \begin{split}
\left|\intb{\psi_t, \rho_{h,t}}\right|\le\norm{\psi_t}_{L^\infty(\Gamma)} \Big\{\varep\norm{\rho_{h,t}}^2+C\varep^{-1}+ \varep_1\norm{ K^{1/2}(|\nabla \rho_h|)\nabla \rho_{h,t} }^2+ C\varep_1^{-1}(1+\norm{\nabla \rho_h}_{0,\beta}^{a})  \Big\}.
\end{split}
\eeq
 It follows from \eqref{midstep0}, \eqref{I2} and \eqref{I3} that  
\beq\label{bb}
\begin{split}
  \frac{1}{2}\ddt \norm{\rho_{h,t}}^2 &+(1-a)\norm{K^{1/2}(|\nabla \rho_h|)\nabla \rho_{h,t}}^2  \le \norm{\psi_t}_{L^\infty(\Gamma)} \Big\{\varep\norm{\rho_{h,t}}^2+C\varep^{-1}\\
  & + \varep_1\norm{ K^{1/2}(|\nabla \rho_h|)\nabla \rho_{h,t} }^2 +C\varep_1^{-1}(1+ \norm{\nabla\rho_h}_{0,\beta}^{a})  \Big\}+\varep\norm{\rho_{h,t}}^2 +C\varep^{-1}\norm{f_t}^2.
  \end{split}
 \eeq
 
 Selecting $\varep_1=(1-a)\varep=\frac{1-a} 2(\|\psi_t\|_{L^\infty(\Gamma)}+1)^{-1}$ yields   
    \begin{align*} 
\ddt \norm{\rho_{h,t}}^2 +(1-a)\norm{ K(|\nabla\rho_h|)\nabla \rho_{h,t} }^2&\le C\norm{\rho_{h,t}}^2 +C \norm{\psi_t}_{L^\infty(\Gamma)}(\norm{\psi_t}_{L^\infty(\Gamma)} +1)\norm{\nabla \rho_h}_{0,\beta}^{a}\\
&\quad+C\left(\norm{\psi_t}_{L^\infty(\Gamma)}+1 \right) \left(\norm{\psi_t}_{L^\infty(\Gamma)} +\norm{f_t}^2\right)\\
&\le \norm{\rho_{h,t}}^2 + \norm{\nabla \rho_h}_{0,\beta}^{\beta}+C Z(t),
\end{align*}
where 
$Z(t) =\left(\norm{\psi_t}_{L^\infty(\Gamma)}+1 \right) \left(\norm{\psi_t}_{L^\infty(\Gamma)} +\norm{f_t}^2\right)
 +\left(\norm{\psi_t}_{L^\infty(\Gamma)}(1+ \norm{\psi_t}_{L^\infty(\Gamma)})\right)^\lambda.
$

  For $t\ge t_* \ge t'>0$. Ignoring the the nonnegative term in the LHS of the above inequality, integrating from $t'$ to $t$ and then integrating in $t'$ from $0$ to $t_*$, we find that    
\beqs
 t_* \norm{\rho_{h,t}}^2  \le  \int_0^{t_*}\norm{\rho_{h,t}(t')}^2 dt'+t_*\int_0^t \big(\norm{\rho_{h,t}}^2 +\norm{\nabla \rho_h}_{0,\beta}^{\beta}\big)d\tau
 +Ct_*\int_{0}^t Z(\tau)d\tau.
  \eeqs
By virtue of \eqref{gradrho-m},   
\beqs
\int_0^t\big(\norm{\rho_{h,t}}^2+\norm{\nabla \rho_h}_{0,\beta}^{\beta}\big)dt\le C \mathcal B,\quad \int_0^{t_*}\norm{\rho_{h,t}(t')}^2 dt' \le C \mathcal B.
\eeqs    
Therefore, 
\beq\label{al1}
 t_* \norm{\rho_{h,t}}^2 \le  C \mathcal B +C \mathcal B t_*
 +Ct_*\int_{0}^t Z(\tau) d\tau.
  \eeq
We estimate $Z$-term by
\beq\label{al2}
\begin{split}
Z(t)\le \left(\norm{\psi_t}_{L^\infty(\Gamma)}+1 \right)^{2\lambda}(1+ \norm{f_t}^2) 
 +\left(1+ \norm{\psi_t}_{L^\infty(\Gamma)}\right)^{2\lambda}
 \le 2\left(1+ \norm{\psi_t}_{L^\infty(\Gamma)}\right)^{2\lambda}\left(1+\norm{f_t}^2\right).
\end{split}
\eeq
The inequality \eqref{Est4rhot} follows from \eqref{al1} and \eqref{al2}. The proof is complete.  
\end{proof}

In the same manner to the problem~\eqref{weakform}, we have as the following:  
 
\begin{theorem}\label{Est4sol} Let $0<t_*<T$. Suppose $(\rho, m)$ be a solution of the problem \eqref{weakform}. Then, there exists  a positive constant $C$ such that
\begin{align}
\label{Est-rho}
&\norm{\rho}_{L^\infty(I;L^2(\Omega))}^2 +\norm{\nabla \rho}_{L^\beta(I;L^\beta(\Omega))}^{\beta} \le  C\left(\norm{\rho^0}^2+ \mathcal A \right);&\\
\label{Est-m}
 &\norm{\rho_{t}}_{L^\infty(I;L^2(\Omega))}^2 +\norm{\nabla \rho}_{L^\infty(I;L^\beta(\Omega))}^{\beta} + \norm{m}_{L^\infty(I;L^2(\Omega))} \le C\mathcal B;\\
 \label{Est-rhot}
 &\norm{\rho_{t}(t)}^2 \le  C (1+t_*^{-1}) \mathcal B 
 +C\int_{0}^T \left(1+ \norm{\psi_t(\tau)}_{L^\infty(\Gamma)}\right)^{2\lambda}\left(1+\norm{f_t(\tau)}\right)^2 d\tau \quad \forall t\in [t_*, T],
\end{align} 
where $\mathcal A, \mathcal B$ are defined in \eqref{Adef} and \eqref{Bdef}.
\end{theorem}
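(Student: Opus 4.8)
The plan is to establish the three estimates \eqref{Est-rho}, \eqref{Est-m} and \eqref{Est-rhot} as the exact continuous-solution counterparts of \eqref{res1a}, of the two parts of \eqref{gradrho-m}, and of \eqref{Est4rhot}, respectively. Since $(\rho,m)$ satisfies the variational identity \eqref{weakform} for \emph{all} test functions $(z,r)\in\M\times\Ro$, and the weak solution is assumed to have sufficient regularity in $x$ and $t$, I may insert the same test functions used in the semidiscrete proofs: $z=\nabla\rho$, $r=\rho$ for the first estimate, $z=\nabla\rho_t$, $r=\rho_t$ for the second, and the time-differentiated identity with $z=\nabla\rho_t$, $r=\rho_t$ for the third. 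The only structural change is that the initial value is now $\rho(\cdot,0)=\rho^0$ itself rather than its projection, so the inequality $\norm{\rho^0_h}\le\norm{\rho^0}$ is replaced by an equality.

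For \eqref{Est-rho} I would choose $z=\nabla\rho$ and $r=\rho$ and add the two equations; the coupling terms $(m,\nabla\rho)$ cancel and I obtain $(\phi\rho_t,\rho)+(K(|\nabla\rho|)\nabla\rho,\nabla\rho)=(f,\rho)-\langle\psi,\rho\rangle$. I bound the degenerate term from below by $c_0\norm{\nabla\rho}_{0,\beta}^\beta-c_0|\Omega|$ using \eqref{i:ineq2} exactly as in \eqref{term1}, and bound the right-hand side by Young's inequality together with the trace estimate \eqref{sec2}, choosing the trace parameter $\varep$ proportional to $(\norm{\psi}_{L^\infty(\Gamma)}+1)^{-1}$ so the gradient contribution is absorbed. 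This yields $\frac{d}{dt}\norm{\rho}_\phi^2+\frac{c_0}{2}\norm{\nabla\rho}_{0,\beta}^\beta\le C\norm{\rho}_\phi^2+C(1+\norm{\psi}_{L^\infty(\Gamma)}^\lambda+\norm{f}^2)$, and integrating this differential inequality via Gronwall's lemma produces \eqref{Est-rho} with the quantity $\mathcal A$ of \eqref{Adef}.

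For \eqref{Est-m} I would take $z=\nabla\rho_t$, $r=\rho_t$, giving $\norm{\rho_t}_\phi^2+\frac12\frac{d}{dt}\int_\Omega H(|\nabla\rho|)\,dx=(f,\rho_t)-\langle\psi,\rho_t\rangle$, and rewrite the boundary term as $-\frac{d}{dt}\langle\psi,\rho\rangle+\langle\psi_t,\rho\rangle$ to form the energy $\mathcal E(t)$ as before. Adding this to the inequality from the first step, integrating in time, applying the trace bounds \eqref{bnd-eps}--\eqref{bnd-est} and the comparison \eqref{Hcompare} between $H$ and $\norm{\nabla\rho}_{0,\beta}^\beta$, and feeding in the already-proven \eqref{Est-rho}, I control both $\int_I\norm{\rho_t}^2$ and $\norm{\nabla\rho}_{L^\infty(I;L^\beta)}^\beta$ in terms of $\mathcal B$ from \eqref{Bdef}. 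The momentum bound then follows by taking $z=m$ in the first equation and arguing as in \eqref{mm} to get $\norm{m}\le\norm{K(|\nabla\rho|)\nabla\rho}\le C\norm{\nabla\rho}_{0,\beta}^\beta$.

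For \eqref{Est-rhot} I would differentiate \eqref{weakform} in time, select $z=\nabla\rho_t$, $r=\rho_t$, and add, producing $\frac12\frac{d}{dt}\norm{\rho_t}^2+\norm{K^{1/2}(|\nabla\rho|)\nabla\rho_t}^2$ together with the $K'$-term, the latter being absorbed through the derivative bound \eqref{i:ineq3} so that the coercive factor $(1-a)$ survives. After estimating $(f_t,\rho_t)$ by Young and $\langle\psi_t,\rho_t\rangle$ by the trace theorem with the degeneracy bound \eqref{i:ineq1}, I reach $\frac{d}{dt}\norm{\rho_t}^2+(1-a)\norm{K^{1/2}(|\nabla\rho|)\nabla\rho_t}^2\le\norm{\rho_t}^2+\norm{\nabla\rho}_{0,\beta}^\beta+CZ(t)$ with the same integrand $Z(t)$ as in the semidiscrete case; integrating from $t'$ to $t$, then in $t'$ over $(0,t_*)$ and dividing by $t_*$ (the device paying the $t_*^{-1}$ price for not controlling $\rho_t(0)$), and invoking \eqref{Est-rho}, \eqref{Est-m} and the bound on $Z$ as in \eqref{al2}, gives \eqref{Est-rhot}. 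The main obstacle is not any new estimate but the \emph{admissibility} of these test functions: at the continuous level $\nabla\rho$ and $\nabla\rho_t$ must lie in $\M=H(\div,\Omega)$ and \eqref{weakform} must be differentiable in $t$, which is precisely what the standing sufficient-regularity hypothesis guarantees. Once that is granted, every remaining step is word-for-word the discrete argument, with the degeneracy of $K$ as $|\nabla\rho|\to\infty$ handled uniformly by \eqref{i:ineq1}--\eqref{i:ineq3} and \eqref{Hcompare}.
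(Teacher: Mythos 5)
Your proposal is correct and is essentially the paper's own argument: the paper gives no separate proof of Theorem~\ref{Est4sol}, stating only that it follows ``in the same manner'' as the semidiscrete estimates \eqref{res1a}, \eqref{gradrho-m} and \eqref{Est4rhot}, which is precisely the test-function-for-test-function transcription you carry out. Your added remarks on admissibility of $\nabla\rho$, $\nabla\rho_t$ as test functions and on the initial data being $\rho^0$ rather than $\pi\rho^0$ are exactly the points covered by the paper's standing regularity assumption.
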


\section{Dependence of solutions on parameters} \label{dependence-sec}
In this section, we study the dependence of the solution on the coefficients of  Forchheimer polynomial $g(s)$ in \eqref{eq2}.
Let $N\ge 1$, the exponent vector $\vec \alpha=(0,\alpha_1,\ldots,\alpha_N)$ and the boundary data $\psi(x,t)$  be fixed.  Let ${\bf D}$ be a compact subset of $\{\vec a=(a_0,a_1,\ldots,a_N):a_0,a_N>0, a_1,\ldots,a_{N-1}\ge 0\}$.
Set 
$\hat \chi({\bf D})=\max\{\chi(\vec a):\vec a\in {\bf D}\}.$
Then $\hat \chi ({\bf D})$ is a number in $[1,\infty)$.

Let $g_1(s)=g(s,\vec a_1)$ and $g_2(s)=g(s,\vec a_2)$ be two functions of class FP($N,\vec \alpha$), where  $\vec a_1$ and $\vec a_2$ belong to  ${\bf D}$. Let $\rho_1=\rho_1(x,t;\vec a_1)$, $\rho_2=\rho_2(x,t;\vec a_2)$  be the two solutions of \eqref{weakform} respective to  $K(\xi,\vec a_1)$, $K(\xi,\vec a_2)$  with the same boundary data $\psi$ and initial data $\rho^0$. We will estimate $\norm{\rho_1-\rho_2}$, $\norm{m_1 -m_2}$ in the term of $|\vec a_1-\vec a_2|$. 

Let $\varrho=\rho_1-\rho_2$, $\mu=m_1 - m_2$. Then 
\beq\label{ss1}
\begin{split}
 \intd{\mu, z}+\intd{K(|\nabla\rho_1|, \vec a_1)\nabla\rho_1 - K(|\nabla\rho_2|,\vec a_2)\nabla\rho_2 , z} =0,\quad &\forall z\in \M,\\
\intd{\phi\varrho_{t},r}-\intd{\mu,\nabla r }=0 , \quad &\forall r\in \Ro.  
\end{split}
\eeq 

\begin{theorem}\label{DepCoeff} Given $0<t_*<T$. Let $(\rho_i, m_i), i=1,2$ be two solutions to problem \eqref{weakform} corresponding to vector coefficients  $\vec a_i$ of Forchheimer polynominal $g(s,\vec a_i)$ in \eqref{eq2}. There exists a constant positive constant $C$ independent of $|\vec a_1- \vec a_2|$ such that    
\beq\label{ssc2}
\norm{\rho_1-\rho_2}_{L^\infty(I;L^2(\Omega))} +\norm{m_1-m_2}_{L^\infty(t_*,T;L^2(\Omega))}^2 \le C |\vec a_1-\vec a_2| .
\eeq
\end{theorem}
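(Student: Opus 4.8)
The plan is to work with the difference equations already recorded in \eqref{ss1}. First I would test the first equation with $z=\nabla\varrho$ and the second with $r=\varrho$ and add them; the coupling term $\intd{\mu,\nabla\varrho}$ cancels and one is left with the energy identity
\[
\tfrac12\ddt\norm{\varrho}_\phi^2 + \intd{K(|\nabla\rho_1|,\vec a_1)\nabla\rho_1 - K(|\nabla\rho_2|,\vec a_2)\nabla\rho_2,\ \nabla\varrho}=0.
\]
To the second term I would apply the perturbed monotonicity \eqref{quasimonotone} with $y=\nabla\rho_1$, $y'=\nabla\rho_2$, $\vec a=\vec a_1$, $\vec a'=\vec a_2$. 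Writing $w=\int_0^1 K(|\gamma(s)|,\vec b(s))\,ds$ for the averaged weight appearing there and $M=|\nabla\rho_1|\vee|\nabla\rho_2|$, this produces a coercive term $(1-a)\int_\Omega|\nabla\varrho|^2 w\,dx$ against a perturbation bounded by $d_0|\vec a_1-\vec a_2|\int_\Omega M|\nabla\varrho|\,w\,dx$.

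The crux of the density estimate is the weighted bound $\int_\Omega M^2 w\,dx \le C(1+\norm{\nabla\rho_1}_{0,\beta}^\beta + \norm{\nabla\rho_2}_{0,\beta}^\beta)\le C\mathcal B$. I would establish this pointwise in $x$: since $\gamma(s)$ runs over the segment joining $\nabla\rho_2$ and $\nabla\rho_1$ one has $|\gamma(s)|\le M$, and a short estimate of the measure of $\{s:|\gamma(s)|\le 1\}$ combined with the degeneracy $K(\xi)\xi^2\le c_3\xi^{2-a}$ from \eqref{iineq2} yields $M^2 w\le C(1+M^\beta)$; the a priori bound $\norm{\nabla\rho_i}_{L^\infty(I;L^\beta)}\le C$ from Theorem~\ref{Est4sol} then closes it. Granting this, weighted Cauchy--Schwarz and Young's inequality absorb the perturbation into the coercive term, leaving
\[
\tfrac12\ddt\norm{\varrho}_\phi^2 + \tfrac{1-a}2\int_\Omega|\nabla\varrho|^2 w\,dx \le C|\vec a_1-\vec a_2|^2.
\]
Because $\varrho(0)=0$ (identical initial data), integrating in time gives simultaneously $\norm{\varrho(t)}^2\le C|\vec a_1-\vec a_2|^2$ uniformly in $t$ and $\int_0^T\!\int_\Omega|\nabla\varrho|^2 w\,dx\,dt\le C|\vec a_1-\vec a_2|^2$; the first is exactly the density part of \eqref{ssc2}.

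For the momentum, the first equation of \eqref{ss1} forces $\mu=-\big(K(|\nabla\rho_1|,\vec a_1)\nabla\rho_1-K(|\nabla\rho_2|,\vec a_2)\nabla\rho_2\big)$, so by \eqref{Umono} and the fact that $w\le C$ (Lemma, part (i)),
\[
\norm{\mu(t)}^2\le C\int_\Omega|\nabla\varrho|^2 w\,dx + C|\vec a_1-\vec a_2|^2\int_\Omega M^2 w\,dx \le C\,D(t)+C|\vec a_1-\vec a_2|^2,
\]
with $D(t)=\int_\Omega|\nabla\varrho|^2 w\,dx$. Everything thus reduces to controlling $D(t)$ \emph{pointwise} in $t$, and here I would exploit the identity $\intd{K(|\nabla\rho_1|,\vec a_1)\nabla\rho_1-K(|\nabla\rho_2|,\vec a_2)\nabla\rho_2,\nabla\varrho}=-\intd{\phi\varrho_t,\varrho}$ coming from the same two testings. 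Together with \eqref{quasimonotone} and Young this gives $\tfrac{1-a}2 D(t)\le \norm{\varrho_t}_\phi\norm{\varrho}_\phi + C|\vec a_1-\vec a_2|^2$; inserting $\norm{\varrho(t)}_\phi\le C|\vec a_1-\vec a_2|$ from the density estimate and $\norm{\varrho_t(t)}_\phi\le \norm{\rho_{1,t}(t)}+\norm{\rho_{2,t}(t)}\le C(t_*)$ from the a priori estimate \eqref{Est-rhot} (valid only for $t\ge t_*$) yields $D(t)\le C(t_*)|\vec a_1-\vec a_2|$, hence $\norm{\mu(t)}^2\le C|\vec a_1-\vec a_2|$ on $[t_*,T]$.

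I expect the main obstacle to be twofold and to live entirely in the degenerate weight $w$. The first difficulty is the uniform weighted integrability $\int_\Omega M^2 w\,dx\le C\mathcal B$, where the merely $L^\beta$ (not $L^2$) control of $\nabla\rho_i$ must be reconciled with the decay of $K$ through the segment geometry of $\gamma$. The second is the need to upgrade the \emph{time-integrated} dissipation bound on $\int_\Omega|\nabla\varrho|^2 w$ to a \emph{pointwise-in-time} one for the momentum; this is exactly what forces the restriction $t\ge t_*$ and the appeal to \eqref{Est-rhot}, and it accounts for the $L^\infty(t_*,T)$ norm on $m_1-m_2$ in \eqref{ssc2}.
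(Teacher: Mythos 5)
Your proposal is correct and follows essentially the same route as the paper's own proof: the same energy identity from testing with $z=\nabla\varrho$, $r=\varrho$, the perturbed monotonicity \eqref{quasimonotone}, the key pointwise weighted bound $wM^2\le C(1+M^\beta)$ (the paper's \eqref{ss}, which it then integrates against the a priori $L^\infty(I;L^\beta)$ bounds), and the identical momentum argument obtained by testing with $z=\mu$, using \eqref{Umono} and upgrading the dissipation bound to pointwise-in-time via the $\norm{\varrho_t}$ estimate \eqref{Est-rhot}, which is exactly what forces $t\ge t_*$. The only deviation is cosmetic: where you integrate the differential inequality directly after Young's inequality, the paper inserts a Poincar\'e--Gronwall step with the weight $\Lambda(t)$ whose exponential factor is ultimately bounded by one, so the two arguments coincide in substance.
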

\begin{proof} 
Choosing $z=\nabla \varrho$ and $r=\varrho$ in \eqref{ss1}, and adding the resulting equations, we find that
\beq\label{Diffeq}
\frac 12\frac{d}{dt}\norm{\varrho}_{\phi}^2+ (K(|\nabla \rho_1|,\vec a_1)\nabla \rho_1-K(|\nabla \rho_2|,\vec a_2)\nabla \rho_2, \nabla \varrho)=0.
\eeq
According to \eqref{quasimonotone}, 
\beq\label{keyEst}
\begin{aligned}
\frac 12\frac{d}{dt}\norm{\varrho}_{\phi}^2&
\le -(\beta-1)\int_\Omega \Big(\int_0^1 K(|\gamma(s)|,\vec b(s)) ds\Big) |\nabla \varrho|^2  dx\\
&\quad +C  |\vec a_1 -\vec a_2|\int_\Omega \Big(\int_0^1 K(|\gamma(s)|,\vec b(s)) ds\Big) (|\nabla \rho_1|\vee |\nabla \rho_2|)|\nabla \varrho|   dx\\
&\le -\frac{\beta-1}{2}\int_\Omega \Big(\int_0^1 K(|\gamma(s)|,\vec b(s)) ds\Big) |\nabla \varrho|^2 dx\\
&\quad+C |\vec a_1 -\vec a_2|^2\int_\Omega  \Big(\int_0^1 K(|\gamma(s)|,\vec b(s)) ds\Big)( |\nabla \rho_1|\vee |\nabla \rho_2|)^2 dx.
\end{aligned}
\eeq
Using Poincare's inequality and H\"older's inequality, we obtain  
\beq\label{A}
\norm{\varrho }^2 \le C_p\norm{\nabla \varrho }_{0,\beta} ^2\le C_p \Big(\int_\Omega \Big(\int_0^1 K(|\gamma(s)|,\vec b(s)) ds\Big)|\nabla \varrho |^2dx\Big)\Big(\int_\Omega  \Big(\int_0^1 K(|\gamma(s)|,\vec b(s)) ds\Big)^{-\frac{\beta}{2-\beta}} dx\Big)^\frac{2-\beta}{\beta},
\eeq
which implies that
\beqs
\begin{split}
\int_\Omega \Big(\int_0^1 K(|\gamma(s)|,\vec b(s)) ds\Big)|\nabla \varrho |^2 dx \ge C_p^{-1}\norm{\varrho }^2\Big(\int_\Omega \Big(\int_0^1 K(|\gamma(s)|,\vec b(s)) ds\Big)^{-\frac{\beta}{2-\beta}} dx\Big)^{-\frac{2-\beta}{\beta}}.
\end{split}
\eeqs
Hence
\beq\label{Rineq}
\begin{split}
\frac 12\frac{d}{dt}\norm{\varrho}_{\phi}^2
&\le -C_*\norm{\varrho}_{\phi}^2\Big(\int_\Omega \Big(\int_0^1 K(|\gamma(s)|,\vec b(s)) ds\Big)^{-\frac{\beta}{a}} dx\Big)^{-\frac{a}{\beta}}\\
&+C |\vec a_1 -\vec a_2|^2\int_\Omega \Big(\int_0^1 K(|\gamma(s)|,\vec b(s)) ds\Big)(|\nabla \rho_1| \vee |\nabla \rho_2|)^2  dx,
\end{split}
\eeq
where $C_* = \frac{\beta-1}{2}C_p^{-1}\phi^*.$
Define 
\beqs
\Lambda (t) = \int_\Omega \Big(\int_0^1 K(|\gamma(s)|,\vec b(s)) ds\Big)^{-\frac{\beta}{a}} dx.
\eeqs

 Applying Gronwall Lemma  to \eqref{Rineq}, and using the fact that $\varrho (0)=0$, we obtain \eqref{ssc2}.
 \beq\label{sq}
\norm{\varrho}_{\phi}^2
\le  C |\vec a_1 -\vec a_2|^2  \int_0^t \Big(e^{-\int_s^t \Lambda(\tau)^{\gamma}  d\tau}\int_\Omega \Big(\int_0^1 K(|\gamma(s)|,\vec b(s)) ds\Big)(|\nabla \rho_1| \vee |\nabla \rho_2|)^2  dx\Big) dt.
\eeq
The last thing is to estimate 
\beq\label{ss}
\begin{aligned}
\Big(\int_0^1 K(|\gamma(t)|,\vec{b}(t)) dt\Big)(|\nabla\rho_1|\vee |\nabla\rho_2|)^2  &\le  K(|\nabla\rho_1|\vee|\nabla\rho_2| ,\vec a_1 \wedge\vec a_2) \Big)(|\nabla\rho_1|\vee |\nabla\rho_2|)^2\\
&\le C (|\nabla\rho_1|\vee |\nabla\rho_2|)^{\beta}
\le C (|\nabla\rho_1|^{\beta}+|\nabla\rho_2|^{\beta}). 
\end{aligned}
\eeq

Substituting \eqref{ss} to \eqref{sq} leads to    
\beq\label{ro}
\norm{\varrho}_{\phi}^2
\le C |\vec a_1 -\vec a_2|^2 \int_0^t   \big( \norm{\nabla \rho_1}_{0,\beta}^{\beta}+ \norm{\nabla \rho_2}_{0,\beta}^{\beta} \big) d\tau.
\eeq
We estimate the RHS of \eqref{ro} using \eqref{Est-m} to obtain the estimate for the first term in \eqref{ssc2}. 

To the second estimate in \eqref{ssc2}, we rewrite \eqref{keyEst} as follows
\beq\label{bs1}
\begin{split}
\int_\Omega \Big(\int_0^1 K(|\gamma(s)|,\vec b(s)) ds\Big) |\nabla \varrho|^2  dx&\le C\Big[\norm{\varrho _t}_{\phi}\norm{\varrho}_{\phi} \\
&+ |\vec a_1 -\vec a_2|^2\int_\Omega \Big(\int_0^1 K(|\gamma(t)|,\vec{b}(t)) dt\Big) (|\nabla\rho_1|\vee |\nabla\rho_2|)^2   dx\Big].
\end{split}
\eeq
By the mean of the triangle inequality and \eqref{Est-rhot} yield  
\beq\label{ybs2}
\norm{\varrho _t}_{\phi}\le \norm{\rho_{1,t}}_{\phi} +\norm{\rho_{2,t}}_{\phi} \le \phi^*(\norm{\rho_{1,t}} +\norm{\rho_{2,t}})\le C_{\rho^0, \psi}.
\eeq
Next plugging \eqref{ss}, \eqref{ro}, \eqref{ybs2} into \eqref{bs1}, we obtain 
\beq\label{aa}
\int_\Omega \Big(\int_0^1 K(|\gamma(s)|,\vec b(s)) ds\Big) |\nabla \varrho|^2  dx \le C\Big[ |\vec a_1 -\vec a_2|\big(\norm{\nabla\rho_1}_{0,\beta}^\beta +\norm{\nabla\rho_2}_{0,\beta}^\beta\big)^{\frac 1 2} + |\vec a_1 -\vec a_2|^2\big(\norm{\nabla\rho_1}_{0,\beta}^\beta +\norm{\nabla\rho_2}_{0,\beta}^\beta\big)\Big].
\eeq
Taking $z=\mu$ in \eqref{ss1} gives
\beq\label{mueq}
 \norm{\mu}^2+(K(|\nabla \rho_1|,\vec a_1)\nabla \rho_1-K(|\nabla \rho_2|,\vec a_2)\nabla \rho_2, \mu) =0.
\eeq
Applying H\"older's inequality to \eqref{mueq} and  then \eqref{Umono} gives    
\begin{multline}\label{bbb}
\norm{\mu}^2 \le \int_\Omega \left( K(|\nabla\rho_1|, \vec a_1)\nabla\rho_1 - K(|\nabla\rho_2|, \vec a_2)\nabla\rho_2 \right)^2 dx
\le 2(1+a)^2 \int_\Omega \Big(\int_0^1  K(|\gamma(t)|,\vec{b}(t)) dt \Big)^2 |\nabla\varrho|^2 dx\\
+ C |\vec a_1 -\vec a_2|^2\int_\Omega \Big(\int_0^1 K(|\gamma(t)|,\vec{b}(t)) dt\Big)^2  (|\nabla\rho_1|\vee |\nabla\rho_2|)^2   dx. 
\end{multline}
Thanks to the upper boundedness of $K(\cdot)$, \eqref{aa} and \eqref{ss}
\beqs
\norm{\mu}^2\le C\left[  |\vec a_1 -\vec a_2|\big(\norm{\nabla\rho_1}_{0,\beta}^\beta +\norm{\nabla\rho_2}_{0,\beta}^\beta\big)^{\frac 1 2} + |\vec a_1 -\vec a_2|^2\big(\norm{\nabla\rho_1}_{0,\beta}^\beta +\norm{\nabla\rho_2}_{0,\beta}^\beta\big)\right].  
\eeqs
Then, we use \eqref{Est-m} to bound $\norm{\nabla\rho_i}_{0,\beta}, i=1,2$ to obtain   
\beqs
\norm{\mu}^2\le C\left( |\vec a_1 -\vec a_2| + |\vec a_1 -\vec a_2|^2\right).   
\eeqs
This proves the estimate for the second term in \eqref{ssc2}. The proof is complete. 
\end{proof}
\section {Error estimates for semidiscrete approximation}\label{err-sec}
In this section, we will give the error estimate between the analytical solution and approximate solution. 
  We define the new variables:
  \beq\label{spliterr}
  \begin{split}
  m-m_h = m-\pi m - ( m_h- \pi m )=\eta - \zeta_h,\\ 
  \rho-\rho_h = \rho-\pi \rho - (\rho_h  - \pi\rho)= \theta -\vartheta_h.      
  \end{split}
  \eeq 

\begin{theorem}\label{err-thm} Given $0<t_*<T$. Let $(\rho,m)$ be the solution of \eqref{weakform} and $(\rho_h, m_h)$ be the solution of \eqref{semidiscreteform}. Suppose that $(\rho, m)\in L^\infty(I; H^{k+1}(\Omega))\times (L^2(I; L^2(\Omega)))^d$ and  $\rho_t\in L^2(I;H^{k+1}(\Omega) )$. Then there exists a positive constant $C$ independence of $h$ such that    
 \beq\label{err20}
  \norm {\rho -\rho_h}_{L^\infty (I;L^2(\Omega))}+ \norm {m -m_h}_{L^2 (I;L^2(\Omega))}+\norm {m -m_h}_{L^\infty (t_*,T;L^2(\Omega))}^2 \le Ch^k. 
 \eeq
  \end{theorem}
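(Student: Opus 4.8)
The plan is to derive the standard error equations by subtracting the semidiscrete system~\eqref{semidiscreteform} from the continuous weak formulation~\eqref{weakform}, and then exploit the $L^2$-projection splitting~\eqref{spliterr}. Writing $m-m_h=\eta-\zeta_h$ and $\rho-\rho_h=\theta-\vartheta_h$, the key observation is that the projection $\pi$ kills the $\theta$ contributions in the inner products against test functions in $\Ro_h$, since $(\theta,r)=(\rho-\pi\rho,r)=0$ for all $r\in\Ro_h$, and similarly for $\eta$ in the divergence pairing. The natural test choice is $z=\nabla\vartheta_h$ (or $z=\zeta_h$) in the momentum equation and $r=\vartheta_h$ in the continuity equation, mirroring the stability argument of Theorem~\ref{bound-lq}. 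After adding the two resulting equations, the troublesome term is the nonlinear difference
\beqs
\bigl(K(|\nabla\rho|)\nabla\rho - K(|\nabla\rho_h|)\nabla\rho_h,\ \nabla\vartheta_h\bigr),
\eeqs
which I would split as $K(|\nabla\rho|)\nabla\rho-K(|\nabla\pi\rho|)\nabla\pi\rho$ plus $K(|\nabla\pi\rho|)\nabla\pi\rho-K(|\nabla\rho_h|)\nabla\rho_h$, so the first piece is controlled by the Lipschitz bound~\eqref{Lipchitz} together with the projection estimate~\eqref{prjpi}, while the second piece, being a difference evaluated at $\nabla\pi\rho$ and $\nabla\rho_h=\nabla\pi\rho-\nabla\vartheta_h$, yields coercivity through the monotonicity~\eqref{monotone0}.

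Concretely, I would first establish a differential inequality of the form
\beqs
\tfrac12\ddt\norm{\vartheta_h}_\phi^2 + (1-a)\int_\Omega\Big(\int_0^1 K(|\gamma(s)|)\,ds\Big)|\nabla\vartheta_h|^2\,dx \le \text{(approximation terms)} + (\phi\theta_t,\vartheta_h),
\eeqs
where the right-hand side collects the consistency errors coming from $\eta$, $\theta_t$, and the nonlinear Lipschitz remainder. The approximation terms are bounded using~\eqref{prjpi} applied to $\rho$ and $\rho_t$ at regularity $H^{k+1}$, producing the $h^{2k}$ (or $h^k$ after a square root) scaling; the weighted coercive term on the left is handled exactly as in Theorem~\ref{DepCoeff}, namely by invoking the Poincar\'e–H\"older argument~\eqref{A} to convert $\int K(|\gamma|)|\nabla\vartheta_h|^2$ into a genuine $\norm{\vartheta_h}^2$ control. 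Integrating in time and applying Gronwall then gives the $L^\infty(I;L^2)$ bound on $\vartheta_h$, hence on $\rho-\rho_h$ after combining with $\norm\theta\le Ch^{k+1}$.

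For the momentum error I would test the first error equation with $z=\zeta_h=m_h-\pi m$, giving $\norm{\zeta_h}^2$ against the nonlinear difference, and then bound $\norm{m-m_h}_{L^2(I;L^2)}$ by combining the already-established control of $\int_0^T\int K(|\gamma|)|\nabla\vartheta_h|^2$ with the Lipschitz estimate~\eqref{Lipchitz}, exactly as the uniqueness computation~\eqref{b} does for the difference of two discrete solutions. The sharper $L^\infty(t_*,T;L^2)$ bound on $\norm{m-m_h}^2$ requires differentiating the error equations in time and using the pointwise-in-time estimate~\eqref{Est-rhot} on $\rho_t$ away from $t=0$, paralleling the structure of~\eqref{Est4rhot}; this is where the $t_*$-dependence and the squared norm in~\eqref{err20} originate. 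The main obstacle, as elsewhere in this paper, is the degeneracy of $K$: the coercive constant $\int_0^1 K(|\gamma(s)|)\,ds$ degenerates as $|\nabla\rho|\to\infty$, so one cannot directly absorb $\norm{\vartheta_h}^2$ into the left-hand side; the Poincar\'e–H\"older device~\eqref{A}, which trades a weighted norm for $\norm{\vartheta_h}^2$ at the cost of the factor $\Lambda(t)$, together with the a~priori $L^\beta$ bounds on $\nabla\rho$ from Theorem~\ref{bound-lq}, is what makes the Gronwall step viable and ultimately pins down the convergence rate $h^k$.
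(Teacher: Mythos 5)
Your overall skeleton (error equations, the splitting \eqref{spliterr}, testing with $\nabla\vartheta_h$, $\vartheta_h$, $\zeta_h$, monotonicity plus Lipschitz plus projection estimates) is the right one, but your central step fails as stated. You split the nonlinear difference at $\nabla\pi\rho$, so monotonicity \eqref{monotone0} applied to the pair $(\nabla\pi\rho,\nabla\rho_h)$ gives coercivity with the weight $w_2=\int_0^1 K(|s\nabla\pi\rho+(1-s)\nabla\rho_h|)\,ds$, while the remainder $\bigl(K(|\nabla\rho|)\nabla\rho-K(|\nabla\pi\rho|)\nabla\pi\rho,\nabla\vartheta_h\bigr)$ is controlled via \eqref{Lipchitz} by $C\int_\Omega|\nabla\theta|\,|\nabla\vartheta_h|\,dx$ with a \emph{different} (and after using $K\le a_0^{-1}$, no) weight. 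To absorb this cross term into $(1-a)\int w_2|\nabla\vartheta_h|^2$ you need $\int w_2^{-1}|\nabla\theta|^2$, and by \eqref{i:ineq1} the factor $w_2^{-1}$ grows like $(1+|\nabla\pi\rho|+|\nabla\rho_h|)^{a}$, which is unbounded precisely because of the degeneracy; running H\"older on it forces $\nabla\theta\in L^\lambda$, i.e.\ $W^{k+1,\lambda}$ regularity of $\rho$, which is stronger than the $H^{k+1}$ hypothesis of the theorem. The paper avoids this mismatch entirely: it never splits at $\pi\rho$. Testing with $z=-\nabla\vartheta_h$ and writing $\nabla\vartheta_h=-(\nabla\rho-\nabla\rho_h)+\nabla\theta$ gives \eqref{midres0}, where monotonicity \eqref{Kl} acts on the pair $(\nabla\rho,\nabla\rho_h)$ and the leftover term $\bigl(K(|\nabla\rho|)\nabla\rho-K(|\nabla\rho_h|)\nabla\rho_h,\nabla\theta\bigr)$ carries the \emph{same} weight $\int_0^1K(|\gamma(s)|)\,ds$ through \eqref{Lipchitz}; weighted Young plus the upper bound $K\le a_0^{-1}$ then yields \eqref{Kr}, i.e.\ half the coercive term plus $C\norm{\nabla\theta}^2$, with no degenerate factor ever inverted.

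Two secondary points. First, the Poincar\'e--H\"older device \eqref{A} is neither used nor usable here in the way you describe: $\Lambda(t)^{-a/\beta}$ has no positive lower bound, so it cannot absorb $\varepsilon_0\norm{\vartheta_h}^2$ pointwise in time; the paper simply bounds $(\phi\theta_t,\vartheta_h)$ by \eqref{r1}, integrates, takes the sup in time, and chooses $\varepsilon_0=1/(2T)$ to absorb $\varepsilon_0T\sup_t\norm{\vartheta_h}_\phi^2$ (a plain Gronwall would also do). Second, for the $L^\infty(t_*,T)$ bound on $\norm{m-m_h}^2$ the paper does \emph{not} differentiate the error equations in time --- doing so would produce time derivatives of the nonlinearity $K(|\nabla\rho|)\nabla\rho-K(|\nabla\rho_h|)\nabla\rho_h$ that you have no tools to handle. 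Instead it reuses the undifferentiated error identity pointwise in time, \eqref{sq10}, bounds $(\phi(\rho_t-\rho_{h,t}),\vartheta_h)$ via \eqref{ineq0} and the a~priori pointwise estimates \eqref{Est4rhot}, \eqref{Est-rhot} valid for $t\ge t_*$, and concludes $\norm{\zeta_h}^2\le C\norm{\vartheta_h}+C\norm{\nabla\theta}^2$ as in \eqref{err4m}; this linear appearance of $\norm{\vartheta_h}\sim h^k$ is exactly why the momentum error enters \eqref{err20} squared, a structural point your proposal gestures at but does not actually produce.
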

\begin{proof}
With the error written in \eqref{spliterr}, it sufficies, in the view of \eqref{prjpi}, to bound $\vartheta_h, \zeta_h$.     
Subtracting the weak equations and its finite approximation, we obtain the error equations              
\beq\label{ErrEq1}
\begin{split}
\intd{m-m_h, z}+\intd{K(|\nabla\rho|)\nabla\rho - K(|\nabla\rho_h|)\nabla\rho_h, z} =0,\quad \forall z\in \M_h,\\
\intd{\phi(\rho_t-\rho_{h,t}),r}-\intd{m-m_h,\nabla r }=0 , \quad \forall r\in \Ro_h.  
\end{split}
\eeq 
We rewrite these equations as follows:
\beq\label{ErrEqPrj}
\begin{split}
\intd{ m-m_h, z}+\intd{K(|\nabla\rho|)\nabla\rho - K(|\nabla\rho_h|)\nabla\rho_h, z} =0, \quad \forall z\in \M_h\\
\intd{\phi\vartheta_{h,t} ,r}+\intd{m-m_h,\nabla r }=\intd{\phi\theta_t ,r},\quad \forall r\in \Ro_h.
\end{split}
\eeq
Selecting $z=-\nabla \vartheta_h$ and $r=\vartheta_h$, and adding two above equations gives 
\beqs
\intd{\phi\vartheta_{h,t} ,\vartheta_h} - \intd{K(|\nabla\rho|)\nabla\rho - K(|\nabla\rho_h|)\nabla\rho_h, \nabla \vartheta_h }= \intd{\phi\theta_t ,\vartheta_h},
\eeqs  
or 
\beq\label{midres0}
\frac1 2 \ddt \norm{\vartheta_h}_{\phi}^2+ \intd{K(|\nabla\rho|)\nabla\rho - K(|\nabla\rho_h|)\nabla\rho_h, \nabla \rho -\nabla\rho_h }= \intd{K(|\nabla\rho|)\nabla\rho - K(|\nabla\rho_h|)\nabla\rho_h, \nabla \theta }+ \intd{\phi\theta_t ,\vartheta_h}.
\eeq  
By the monotonicity of $K(\cdot)$ in \eqref{monotone0}, 
 \beq\label{Kl}
 (K(|\nabla \rho|)\nabla \rho -K(|\nabla \rho_h|)\nabla \rho_h ,\nabla\rho -\nabla\rho_h)  \ge (\beta-1) \int_\Omega \Big(\int_0^1 K(|\gamma(s)|) ds\Big)|\nabla\rho -\nabla \rho_h|^2dx . 
    \eeq
   
By Young's inequality, for $\varep_0>0$  
\beq\label{r1}
(\phi\theta_t, \vartheta_h) \le C\varep_0^{-1} \norm{\theta_t}_{\phi}^2 +\varep_0 \norm{\vartheta_h}_{\phi}^2.
\eeq
Applying \eqref{Lipchitz}, Young's inequalities and the upper boundedness of $K(\cdot)$,  we obtain
\beq\label{Kr}
\begin{split}
(K(|\nabla \rho|)\nabla \rho -K(|\nabla \rho_h|)\nabla \rho_h ,\nabla \theta)
&\le (1+a)\int_\Omega  \Big(\int_0^1 K(|\gamma(s)|) ds\Big)|\nabla \rho-\nabla \rho_h||\nabla \theta| dx\\
&\le \frac{\beta-1} 2 \int_\Omega  \Big(\int_0^1 K(|\gamma(s)|) ds\Big)|\nabla \rho-\nabla \rho_h|^2 dx +C \norm{\nabla \theta}^2 .
\end{split}
\eeq
Combining \eqref{Kl}, \eqref{r1}, \eqref{Kr} and \eqref{midres0} gives  
\begin{align*}
\frac 12\ddt\norm{\vartheta_h}_{\phi}^2+\frac{\beta-1} 2 \int_\Omega  \Big(\int_0^1 K(|\gamma(s)|) ds\Big)|\nabla \rho-\nabla \rho_h|^2 dx\le C \norm{\nabla \theta}^2+C\varep_0^{-1}\norm{ \theta_t}_{\phi}^2 +\varep_0\norm{\vartheta_h}_{\phi}^2.
\end{align*}
Integrating in time from $0$ to $T$ and then taking sup-norm in time of the resultant shows that 
\beqs
 \sup_{t\in[0,T]}\norm{\vartheta_h}_{\phi}^2+  \int_0^T\int_\Omega \Big(\int_0^1 K(|\gamma(s)|) ds\Big)|\nabla \rho-\nabla \rho_h|^2dx dt 
\le C\norm{\nabla \theta}_{L^\infty(I,L^2)}^2 +C\varep_0^{-1}\norm{\theta_t}_{L^\infty(I,L^2)}^2 +\varep_0 T \sup_{t\in[0,T]}\norm{\vartheta_h}_{\phi}^2.
\eeqs
Selecting $\varep_0 =1/(2T),$ we find that 
\beq\label{errl2}
\sup_{t\in[0,T]}\norm{\vartheta_h}_{\phi}^2 +  \int_0^T\int_\Omega \Big(\int_0^1 K(|\gamma(s)|) ds\Big)|\nabla \rho-\nabla \rho_h|^2 dx dt\le C \norm{\nabla \theta}_{L^\infty(I,L^2)}^2 +CT\norm{\theta_t}_{L^\infty(I,L^2)}^2.
\eeq

Using $L^2$-projection and choose $z=\zeta_h $ in the first equation in \eqref{ErrEq1} yields
\beqs
\norm{\zeta_h}^2 = - \intd{K(|\nabla\rho|)\nabla\rho - K(|\nabla\rho_h|)\nabla\rho_h, \zeta_h} \le C \int_\Omega \Big(\int_0^1 K(|\gamma(s)|) ds\Big) |\nabla \rho-\nabla \rho_h || \zeta_h| dx.  
\eeqs
It follows from Cauchy's inequality and the upper boundedness of the function $K(\cdot)$ that 
\beq\label{mL2}
 \norm{\zeta_h}^2 dt \le C  \int_\Omega \Big(\int_0^1 K(|\gamma(s)|) ds\Big) |\nabla \rho-\nabla \rho_h |^2 dx dt.
\eeq   
Putting \eqref{errl2} and \eqref{mL2} together, using equivalent norm, we see that 
\beq\label{core0}
\norm{\vartheta_h}_{L^\infty(I,L^2)}^2 +  \norm{\zeta_h}_{L^2(I,L^2)}^2  \le C \norm{\nabla \theta}_{L^\infty(I,L^2)}^2 +CT\norm{\theta_t}_{L^\infty(I,L^2)}^2.
\eeq
 As a consequence, the two first terms in \eqref{err20} follows from \eqref{core0}, \eqref{spliterr} and \eqref{prjpi}.

For the last term in \eqref{err20}, we rewrite \eqref{midres0} as  follows 
\beq\label{sq10}
 \intd{K(|\nabla \rho|)\nabla \rho -K(|\nabla \rho_h|)\nabla \rho_h ,\nabla \rho -\nabla\rho_h }
  =(\phi(\rho_t -\rho_{h,t}), \vartheta_h) + \intd{K(|\nabla \rho|)\nabla \rho -K(|\nabla \rho_h|)\nabla \rho_h ,\nabla \theta}.
\eeq
In virtue of the triangle inequality, and H\"older's inequality, we have      
\beq\label{ineq0}
(\rho_t -\rho_{h,t}, \vartheta_h) \le  (|\rho_t|+|\rho_{h,t}|,|\vartheta_h|) \le (\norm{\rho_t}+\norm{\rho_{h,t}} )\norm{\vartheta_h}.
\eeq

It follows from \eqref{sq10}, \eqref{Kl}, \eqref{Kr},  and \eqref{ineq0} that
\begin{align*}
\int_\Omega \Big(\int_0^1 K(|\gamma(s)|) ds\Big) |\nabla \rho-\nabla \rho_h|^2 dx \le C(\norm{\rho_t}+\norm{\rho_{h,t}} )\norm{\vartheta_h} + C \norm{\nabla \theta}^2  .
\end{align*}
Thanks to \eqref{Est4rhot} and \eqref{Est-rhot}, there is a positive constant $C$ independence of $h$ such that 
\beq
 \int_\Omega \Big(\int_0^1 K(|\gamma(s)|) ds\Big)|\nabla \rho-\nabla \rho_h|^2 dx \le C  \norm{\vartheta_h} +C \norm{\nabla \theta}^2.
\eeq
This and \eqref{mL2} show that
\beq\label{err4m}
\norm{\zeta_h}^2 \le C\int_\Omega \Big(\int_0^1 K(|\gamma(s)|) ds\Big) |\nabla \rho-\nabla \rho_h |^2 dx \le C  \norm{\vartheta_h} +C \norm{\nabla \theta}^2.
\eeq

Due to \eqref{core0}, \eqref{err4m} and the fact that $\norm{\nabla \theta}\le Ch^{k}\norm{\rho}_{k+1}$, we obtain \eqref{err20}.
\end{proof}

\begin{theorem}\label{par-thm} Given $0<t_*<T$. Let $(\rho_{h,i}, m_{h,i}), i=1,2$ be two solutions to problems \eqref{semidiscreteform} corresponding to vector coefficients  $\vec a_i$ of Forchheimer polynominal $g(s, \vec a_i)$ in \eqref{eq2}. Suppose that each $(\rho_i, m_i)\in L^\infty(I; H^{k+1}(\Omega) ) \times \left(L^2(I; L^2(\Omega))\right)^d$  and $\rho_{t,i}\in L^2(I; H^{k+1}(\Omega) )$. Then, there exists a constant positive constant $C$ independent of $h$ and $|\vec a_1 -\vec a_2|$ such that   
\beq\label{mrhoDif}
\norm{\rho_{h,1} -\rho_{h,2}}_{L^\infty(I;L^2(\Omega))}+ \norm{m_{h,1} - m_{h,2}}_{L^\infty(t_*,T;L^2(\Omega))}^2 \le C ( h^k+ |\vec a_1 -\vec a_2| ).
\eeq
\end{theorem}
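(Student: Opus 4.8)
The plan is to obtain this estimate as a direct consequence of the two preceding theorems: the continuous dependence on parameters at the PDE level (Theorem~\ref{DepCoeff}) and the convergence of the mixed scheme (Theorem~\ref{err-thm}), glued together by the triangle inequality. The guiding observation is that each discrete solution $(\rho_{h,i},m_{h,i})$ lies within $O(h^k)$ of its own continuous counterpart $(\rho_i,m_i)$, while the two continuous solutions are within $O(|\vec a_1-\vec a_2|)$ of each other because their coefficient vectors are close. First I would introduce, for $i=1,2$, the weak solution $(\rho_i,m_i)$ of \eqref{weakform} associated with the coefficient vector $\vec a_i$ and the common boundary datum $\psi$ and initial datum $\rho^0$; the regularity hypotheses assumed here are precisely those required to invoke Theorem~\ref{err-thm} separately for each $i$.

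For the density I would split
\beqs
\rho_{h,1}-\rho_{h,2} = (\rho_{h,1}-\rho_1)+(\rho_1-\rho_2)+(\rho_2-\rho_{h,2})
\eeqs
and take the $L^\infty(I;L^2(\Omega))$ norm. The triangle inequality bounds the first and third summands by $\norm{\rho_i-\rho_{h,i}}_{L^\infty(I;L^2(\Omega))}\le Ch^k$ via \eqref{err20}, while the middle summand is controlled by $\norm{\rho_1-\rho_2}_{L^\infty(I;L^2(\Omega))}\le C|\vec a_1-\vec a_2|$ via \eqref{ssc2}. This immediately yields the density part of \eqref{mrhoDif}.

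For the momentum I would carry out the same decomposition $m_{h,1}-m_{h,2}=(m_{h,1}-m_1)+(m_1-m_2)+(m_2-m_{h,2})$, but now the target is a squared norm. Using the elementary inequality $(a+b+c)^2\le 3(a^2+b^2+c^2)$ pointwise in time and then taking the supremum over $[t_*,T]$ gives
\beqs
\norm{m_{h,1}-m_{h,2}}_{L^\infty(t_*,T;L^2(\Omega))}^2 \le 3\Big(\norm{m_{h,1}-m_1}_{L^\infty(t_*,T;L^2(\Omega))}^2+\norm{m_1-m_2}_{L^\infty(t_*,T;L^2(\Omega))}^2+\norm{m_2-m_{h,2}}_{L^\infty(t_*,T;L^2(\Omega))}^2\Big).
\eeqs
Each term on the right is already estimated: the two outer terms are at most $Ch^k$ by the last term of \eqref{err20} applied to $(\rho_i,m_i)$, and the middle term is at most $C|\vec a_1-\vec a_2|$ by the second estimate in \eqref{ssc2}. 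Adding the density and momentum contributions produces \eqref{mrhoDif}.

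Since all the analytic difficulty — the degeneracy of $K$, the perturbed monotonicity \eqref{quasimonotone}, and the nonlinear Gronwall argument — has already been absorbed into Theorems~\ref{DepCoeff} and~\ref{err-thm}, there is no genuine obstacle here. The only point demanding care is the bookkeeping of the squared momentum norm, where one must square the triangle inequality (rather than square each norm separately) and absorb the factor $3$ into the generic constant $C$. I would also verify that the mixed exponents line up: the density estimates are linear on both sides in $h^k$ and $|\vec a_1-\vec a_2|$, whereas the momentum estimates are quadratic on the left but linear on the right, which is exactly the form supplied by \eqref{err20} and \eqref{ssc2}.
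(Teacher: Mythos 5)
Your proposal is correct and is essentially identical to the paper's own proof: the paper also splits $\rho_{h,1}-\rho_{h,2}$ and $m_{h,1}-m_{h,2}$ through the two continuous solutions, applies the triangle inequality (with the squared momentum norms handled by a generic constant, the paper using a factor $4$ where you use $3$), and then invokes \eqref{err20} for the discretization terms and \eqref{ssc2} for the parameter-dependence term. No gaps; your extra bookkeeping remarks about squaring the triangle inequality and matching the linear-versus-quadratic exponents are exactly the points implicit in the paper's one-line argument.
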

\begin{proof}
The triangle inequality shows that 
$$
\norm{\rho_{h,1} -\rho_{h,2}} + \norm{m_{h,1} -m_{h,2}}^2  \le 4 \left( \sum_{i=1,2}(\norm{\rho_{h,i} -\rho_i} + \norm{m_{h,i} -m_i}^2) +\norm{\rho_1 -\rho_2 }+\norm{m_1 -m_2}^2\right).
$$
Then by using \eqref{err20} to treat the sum-term and \eqref{ssc2} to the last terms we obtain \eqref{mrhoDif}.   
\end{proof}
\section{Error analysis for fully discrete method}\label{err-ful-sec}
 In analyzing this method, we proceed in a similar fashion as for the semidiscrete method. We derive an error estimate for the fully discrete time Galerkin approximation of the differential equation. First, we give some uniform stability results  that are crucial in
getting the  convergence results.

\begin{lemma}[Stability]\label{stab-appr} Let  $(\rho_h^i, m_h^i)$ solve the fully discrete finite element
approximation \eqref{fullydiscreteform} for each time step $i=1,2,\ldots ,N$. There exists a positive constant $C$ independent of $t,i,\Delta t$ such that for $\Delta t$ sufficiently small 
\beq\label{pwBound}
\norm{\rho_h^i }^2 +\norm{m_h^i} \le  C (1-\Dt)^{-i} \norm{\rho^0}^2 +C\Dt \sum_{j=1}^i (1-\Dt)^{-i+j-1} \Big(1+\norm{\psi^j}_{L^\infty(\Gamma)}^{\lambda} + \norm{f^j}^2\Big).  
\eeq
\end{lemma}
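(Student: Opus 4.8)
The plan is to reproduce the first energy estimate of Theorem~\ref{bound-lq}, but with the backward Euler difference quotient in place of $\ddt$, and to close the resulting recursion with the discrete Gronwall inequality of Lemma~\ref{DGronwall} instead of its continuous analogue. The engine of the whole argument is the elementary identity $\intd{\phi(\rho_h^i-\rho_h^{i-1}),\rho_h^i}\ge \tfrac12\big(\norm{\rho_h^i}_{\phi}^2-\norm{\rho_h^{i-1}}_{\phi}^2\big)$, which is the discrete substitute for $\tfrac12\ddt\norm{\rho_h}_{\phi}^2$ and follows from $a(a-b)\ge\tfrac12(a^2-b^2)$.

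First I would fix a time level $i$ and test \eqref{fullydiscreteform} with $z=\nabla\rho_h^i$ and $r=\rho_h^i$; adding the two relations cancels the coupling term $\intd{m_h^i,\nabla\rho_h^i}$ and gives
\[
\intd{\phi\tfrac{\rho_h^i-\rho_h^{i-1}}{\Dt},\rho_h^i}+\intd{K(|\nabla\rho_h^i|)\nabla\rho_h^i,\nabla\rho_h^i}=\intd{f^i,\rho_h^i}-\intb{\psi^i,\rho_h^i}.
\]
I then bound the dissipation term from below by $c_0\norm{\nabla\rho_h^i}_{0,\beta}^{\beta}-c_0|\Omega|$ exactly as in \eqref{term1} (via \eqref{iineq2}), and bound the right-hand side by $C\norm{\rho_h^i}_{\phi}^2+C(1+\norm{\psi^i}_{L^\infty(\Gamma)}^{\lambda}+\norm{f^i}^2)$ using Young's inequality together with the trace estimate \eqref{sec2}, absorbing the boundary contribution into the gradient term through the same choice of $\varep$ used in \eqref{term2}--\eqref{r:est}. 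Inserting the discrete identity above and dividing by $\Dt$ puts this in the precise form demanded by Lemma~\ref{DGronwall}, namely $\tfrac{a_i-a_{i-1}}{\Dt}-\ell a_i\le g_i$ with $a_i=\norm{\rho_h^i}_{\phi}^2$, a fixed constant $\ell$, and $g_i=C(1+\norm{\psi^i}_{L^\infty(\Gamma)}^{\lambda}+\norm{f^i}^2)$.

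Invoking Lemma~\ref{DGronwall} (legitimate since $1-\ell\Dt>0$ for $\Dt$ small) yields the bound with weights $(1-\ell\Dt)^{-i}$ and $(1-\ell\Dt)^{-i+j-1}$. Since $\big((1-\Dt)/(1-\ell\Dt)\big)^{i}\le e^{(\ell-1)T}$ uniformly for $i\le T/\Dt$, these weights may be replaced by $(1-\Dt)^{-i}$ and $(1-\Dt)^{-i+j-1}$ at the cost of enlarging $C$, which is precisely why the statement asks for $\Dt$ sufficiently small. Combined with $\norm{\rho_h^0}=\norm{\pi\rho^0}\le\norm{\rho^0}$ and the norm equivalence $\sqrt{\phi_*}\norm{\cdot}\le\norm{\cdot}_{\phi}\le\sqrt{\phi^*}\norm{\cdot}$, this delivers the $\norm{\rho_h^i}^2$ part of \eqref{pwBound}.

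For the momentum I would test the first equation of \eqref{fullydiscreteform} with $z=m_h^i$, obtaining $\norm{m_h^i}^2=-\intd{K(|\nabla\rho_h^i|)\nabla\rho_h^i,m_h^i}$ and hence $\norm{m_h^i}\le\norm{K(|\nabla\rho_h^i|)\nabla\rho_h^i}\le C\norm{\nabla\rho_h^i}_{0,\beta}^{\beta}$, following the chain in \eqref{mm} and using \eqref{i:ineq1} and \eqref{iineq2}. I expect this to be the main obstacle: the first energy balance controls the gradient only in the time-summed, dissipative sense $\Dt\sum_{j}\norm{\nabla\rho_h^j}_{0,\beta}^{\beta}$, whereas \eqref{pwBound} asks for $\norm{m_h^i}$ at a single level $i$, and extracting one term from that sum costs an inverse power of $\Dt$. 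The natural remedy is to prove a discrete counterpart of the higher estimate behind the second bound of Theorem~\ref{bound-lq}: test the scheme with the backward increments $r=(\rho_h^i-\rho_h^{i-1})/\Dt$ and $z=\nabla(\rho_h^i-\rho_h^{i-1})/\Dt$, and exploit the convexity of $H$ in \eqref{Hxi} to produce a telescoping quantity $\int_\Omega\big[H(|\nabla\rho_h^i|)-H(|\nabla\rho_h^{i-1}|)\big]\,dx$, which yields a genuinely pointwise-in-$i$ control of $\norm{\nabla\rho_h^i}_{0,\beta}^{\beta}$; the momentum bound then follows from the displayed inequality above. Handling the nonlinear degenerate factor $K'(\xi)$ that arises in this discrete differentiation — the analogue of \eqref{i:ineq3} — is exactly where the degeneracy makes the estimate delicate, while the remainder parallels the semidiscrete proof of Theorem~\ref{bound-lq} step for step.
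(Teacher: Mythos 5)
Your treatment of the density bound coincides with the paper's proof: the same test functions $z=\nabla\rho_h^i$, $r=\rho_h^i$ (the paper scales them by $2$ so as to use the exact identity $2\big(\phi(\rho_h^i-\rho_h^{i-1}),\rho_h^i\big)=\norm{\rho_h^i}_{\phi}^2-\norm{\rho_h^{i-1}}_{\phi}^2+\norm{\rho_h^i-\rho_h^{i-1}}_{\phi}^2$, whereas you use the one-sided inequality, which changes nothing), the same use of \eqref{iineq2}, the trace bound \eqref{sec2}, Young's inequality with $\varep\sim(\norm{\psi^i}_{L^\infty(\Gamma)}+1)^{-1}$, and then Lemma~\ref{DGronwall}; the paper arranges the absorbed term so that $\ell=1$, which is why its weights are literally $(1-\Dt)^{-i}$ and your conversion step from $(1-\ell\Dt)^{-i}$ is not needed, though it is harmless.

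The momentum bound is where you genuinely depart from the paper, and your diagnosis of the obstacle is exactly right---indeed it points at a gap in the paper's own argument. The paper passes from the one-step inequality directly to \eqref{dscEst1}, asserting a pointwise-in-$i$ bound on $\norm{\nabla\rho_h^i}_{0,\beta}^{\beta}$ ``according to'' Lemma~\ref{DGronwall}; but the telescoping in that lemma controls only the weighted sum $\Dt\sum_{j\le i}(1-\Dt)^{j-1}\norm{\nabla\rho_h^j}_{0,\beta}^{\beta}$, so isolating the $j=i$ term costs precisely the factor $\Dt^{-1}$ you predicted, and the momentum estimate obtained from \eqref{dscEst3} inherits this defect. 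Your repair---testing with $r=\partial\rho_h^i$, $z=\nabla\partial\rho_h^i$ and telescoping $\int_\Omega H(|\nabla\rho_h^i|)\,dx$---is the correct discrete analogue of the semidiscrete argument (the choice $z=\nabla\rho_{h,t}$, $r=\rho_{h,t}$ leading to \eqref{Diffineq1}) and does give genuine level-$i$ control of the gradient via \eqref{Hcompare}. Two remarks. First, no discrete analogue of \eqref{i:ineq3} is needed: since $y\mapsto\tfrac12 H(|y|)$ has gradient $K(|y|)y$, the monotonicity \eqref{monotone0} is exactly convexity, so $K(|\nabla\rho_h^i|)\nabla\rho_h^i\cdot(\nabla\rho_h^i-\nabla\rho_h^{i-1})\ge\tfrac12\big[H(|\nabla\rho_h^i|)-H(|\nabla\rho_h^{i-1}|)\big]$ pointwise, and $K'$ never appears; the step you flag as delicate is actually a non-issue on this route. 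Second, this route (like the semidiscrete estimate \eqref{gradrho-m}) necessarily brings in extra data: the telescoping starts from $\int_\Omega H(|\nabla\rho_h^0|)\,dx$, hence $\norm{\nabla\pi\rho^0}_{0,\beta}^{\beta}$, and the boundary term requires a discrete summation by parts generating increments $\partial\psi^i$. So what you would actually prove is \eqref{pwBound} with a $\mathcal B$-type right-hand side as in \eqref{Bdef}, not the literal right-hand side stated in the lemma; that discrepancy lies in the paper's statement and proof, not in your argument.
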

 \begin{proof}
Selecting $z=2\nabla \rho_h^i,  r=2\rho_h^i$ in \eqref{fullydiscreteform}, we find that  
\beq
\begin{split}
  2\intd{m_h^i, \nabla\rho_h^i}+2\intd{K(|\nabla\rho_h^i|)\nabla\rho_h^i,\nabla\rho_h^i } =0,\\
2\intd{\phi\frac{ \rho_h^i - \rho_h^{i-1}}{\Delta t }, \rho_h^i }-2\intd{m_h^i,\nabla \rho_h^i }=2\intd{f^i,\rho_h^i}-2\intb{\psi^i,\rho_h^i }. 
\end{split}
\eeq
Adding the two above equations, and using the identity  
 \beqs
 2\Big( \phi (\rho_h^i - \rho_h^{i-1}), \rho_h^i\Big) = \norm{\rho_h^i}_{\phi}^2 - \norm{\rho_h^{i-1}}_{\phi}^2 +\norm{\rho_h^i -\rho_h^{i-1}}_{\phi}^2,
 \eeqs 
 we obtain 
\beq\label{t1}
\norm{\rho_h^i}_{\phi}^2 - \norm{\rho_h^{i-1}}_{\phi}^2 +\norm{\rho_h^i -\rho_h^{i-1}}_{\phi}^2 + 2\Delta t\intd{ K(|\nabla \rho_h^i|)\nabla \rho_h^i,\nabla \rho_h^i}= 2\Delta t (f^i, \rho_h^i )-2\Delta t \intb{\psi^i, \rho_h^i }.  
\eeq
It follows from \eqref{iineq2} that
\beq\label{t2}
 2\Delta t\intd{ K(|\nabla \rho_h^i|)\nabla \rho_h^i,\nabla \rho_h^i} \ge 2c_2\Delta t \norm{\nabla \rho_h^i}_{0,\beta}^{\beta}-2c_2\Delta t |\Omega|. 
\eeq
Using \eqref{sec2} and H\"older's inequality to the RHS of \eqref{t1} shows that
\beq\label{t3}
2\Delta t\Big( \intd{f^i, \rho_h^i}- \intb{\psi^i, \rho_h^i}\Big)\\
\le 2\Dt \norm{\rho_h^i}_{\phi}\norm{f^i}_{\phi^{-1}}+2\Delta t \norm{\psi^i}_{L^\infty(\Gamma)}\Big\{ C_1\norm{\rho_h^i} + \varep\norm{\nabla \rho_h^i}_{0,\beta}^{\beta} +C_1\varep^{-\frac 1{\beta-1}}   \Big\}.
\eeq
Combining \eqref{t1}--\eqref{t3}, then selecting $\varep= \frac{c_2}2 ( \norm{\psi^i}_{L^\infty(\Gamma)}+1)^{-1}$ yields  
\beqs
\begin{split}
&\norm{\rho_h^i}_{\phi}^2 - \norm{\rho_h^{i-1}}_{\phi}^2 +\norm{\rho_h^i -\rho_h^{i-1}}_{\phi}^2 +c_2\Delta t \norm{\nabla \rho_h^i}_{0,\beta}^{\beta}\\
& \qquad\le  2\Dt \norm{\rho_h^i}_{\phi}\norm{f^i}_{\phi^{-1}}+C\Dt \norm{\psi^i}_{L^\infty(\Gamma)}\norm{\rho_h^i} +C\Dt \norm{\psi^i}_{L^\infty(\Gamma)}(\norm{\psi^i}_{L^\infty(\Gamma)}+1)^{\frac 1{\beta-1}}+C\Dt\\
&\qquad\le \Dt\norm{\rho_h^i}_{\phi}^2  + C\Delta t\Big(1+\norm{\psi^i}_{L^\infty(\Gamma)}+\norm{\psi^i}_{L^\infty(\Gamma)}^2+\norm{\psi^i}_{L^\infty(\Gamma)}^{\frac{\beta}{\beta-1} } + \norm{f^i}^2\Big).
 \end{split}
\eeqs
We simplify the RHS of the above estimate using the inequality $\norm{\psi^i}_{L^\infty(\Gamma)}, \norm{\psi^i}_{L^\infty(\Gamma)}^2\le C(1+\norm{\psi^i}_{L^\infty(\Gamma)}^\lambda)$ to obtain     
\beqs
\frac{\norm{\rho_h^i}_{\phi}^2 - \norm{\rho_h^{i-1}}_{\phi}^2}{\Delta t} - \norm{\rho_h^i}_{\phi}^2 +c_2 \norm{\nabla \rho_h^i}_{0,\beta}^{\beta} \le   C\Big(1+\norm{\psi^i}_{L^\infty(\Gamma)}^{\lambda} + \norm{f^i}^2\Big).
\eeqs
 According to discrete Gronwall's inequality in Lemma~\ref{DGronwall},  
\beq\label{dscEst1}
 \norm{\rho_h^i }^2 + \norm{\nabla \rho_h^{i}}_{0,\beta}^{\beta}   \le  C (1-\Dt)^{-i} \norm{\rho_h^0}^2 +\Dt \sum_{j=1}^i (1-\Dt)^{-i+j-1} \Big(1+\norm{\psi^j}{L^\infty(\Gamma)}^{\lambda} + \norm{f^j}^2\Big)  .
 \eeq

Now selecting  $z=m_h^i$ in the first equation \eqref{fullydiscreteform} gives 
$
  \norm{m_h^i}^2 +\intd{K(|\nabla\rho_h^{i}|)\nabla\rho_h^{i},m_i} =0,
$
hence 
\beq\label{dscEst3}
\norm{m_h^i} \le C\norm {K^{1/2}(|\nabla\rho_h^{i}|)\nabla\rho_h^{i} }\le \norm{\nabla\rho_h^{i}}_{0,\beta}^{\beta}.
\eeq
Putting \eqref{dscEst1} and \eqref{dscEst3} together with note that $\norm{\rho_h^0}^2 \le \norm{\rho^0}^2$ implies \eqref{pwBound}. The proof is complete.
\end{proof}

As in the semidiscrete case, we use $\eta = m -\pi m,$ $\zeta_h=m_h-\pi m $,   $\theta=\rho-\pi\rho$, $\vartheta_h=\rho_h-\pi \rho$ and $\eta^i, \theta^i$, $\zeta_h^i,\vartheta_h^i$  be evaluating $\eta, \theta$, $\zeta_h,\vartheta_h$  at the discrete time levels.
We also define  
$$\partial \varphi^n = \frac {\varphi^{n} -\varphi^{n-1} }{\Delta t}.$$
\begin{theorem}\label{Err-ful}
Let $(\rho^i,m^i)$ solve problem \eqref{weakform} and $(\rho_h^i, m_h^i)$ solve the fully  discrete finite element approximation \eqref{fullydiscreteform} for each time step $i$, $i=1,\ldots, N$. Suppose that  $(\rho,m)\in L^\infty(I; H^{k+1}(\Omega))\times \left( L^\infty(I; L^2(\Omega))\right)^d $ and $\rho_{tt} \in L^2(I; L^2(\Omega))$. Then, there exists a positive constant $C$ independent of $h$ and $\Delta t$  such that for $\Delta t$ sufficiently small  
\beq\label{fulerrl2}
 \norm {\rho^i -\rho_h^i} +\norm {m^i -m_h^i}^2  \le C\left( h^{k}+ \sqrt{\Dt} \right).
\eeq  
\end{theorem}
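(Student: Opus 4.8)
The plan is to follow the semidiscrete argument of Theorem~\ref{err-thm} step by step, now at the discrete levels $t_i$, and to isolate the additional time-discretization error. First I would evaluate \eqref{weakform} at $t=t_i$ and subtract the fully discrete scheme \eqref{fullydiscreteform}. Writing $\rho_h^i=\pi\rho^i+\vartheta_h^i$ and $\partial\rho_h^i=\pi\,\partial\rho^i+\partial\vartheta_h^i$, the continuity error reads
\[
\intd{\phi\,\partial\vartheta_h^i,r}+\intd{m^i-m_h^i,\nabla r}=\intd{\phi\,(\rho_t^i-\pi\partial\rho^i),r},\qquad r\in\Ro_h,
\]
while the flux error equation is identical to \eqref{ErrEqPrj}. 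The right-hand side is the only genuinely new object, and I would split it as $\rho_t^i-\pi\partial\rho^i=(\rho_t^i-\partial\rho^i)+(I-\pi)\partial\rho^i$, the first piece being the time-truncation (consistency) error and the second a projection error in time.

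Next I would test the flux equation with $z=-\nabla\vartheta_h^i$ and the continuity equation with $r=\vartheta_h^i$, and add, exactly as in the derivation of \eqref{midres0}. Using the identity $2\intd{\phi(w^i-w^{i-1}),w^i}=\norm{w^i}_\phi^2-\norm{w^{i-1}}_\phi^2+\norm{w^i-w^{i-1}}_\phi^2$ already exploited in Lemma~\ref{stab-appr}, the term $\intd{\phi\,\partial\vartheta_h^i,\vartheta_h^i}$ telescopes and is bounded below by $\tfrac{1}{2\Dt}(\norm{\vartheta_h^i}_\phi^2-\norm{\vartheta_h^{i-1}}_\phi^2)$. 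The monotonicity \eqref{monotone0} supplies the coercive weighted lower bound for $\intd{K(|\nabla\rho^i|)\nabla\rho^i-K(|\nabla\rho_h^i|)\nabla\rho_h^i,\nabla\rho^i-\nabla\rho_h^i}$, while the Lipschitz estimate \eqref{Lipchitz} with Young's inequality absorbs the cross term against $\nabla\theta^i$ into the coercive term at the cost of $C\norm{\nabla\theta^i}^2$. For the consistency error I would use the integral remainder $\rho_t^i-\partial\rho^i=\tfrac1{\Dt}\int_{t_{i-1}}^{t_i}(s-t_{i-1})\rho_{tt}(s)\,ds$, giving $\norm{\rho_t^i-\partial\rho^i}^2\le \Dt\int_{t_{i-1}}^{t_i}\norm{\rho_{tt}}^2\,ds$ and hence $\Dt\sum_i\norm{\rho_t^i-\partial\rho^i}^2\le \Dt^2\norm{\rho_{tt}}_{L^2(I;L^2)}^2$. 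Multiplying by $\Dt$, summing over $i$, and invoking the discrete Gronwall Lemma~\ref{DGronwall} (valid for $\Dt$ small so that $1-\Dt>0$) then yields, modulo the time-projection term discussed below, $\norm{\vartheta_h^n}^2\le C(h^{2k}+\Dt^2)$, so that $\norm{\vartheta_h^n}\le C(h^k+\Dt)$; combined with \eqref{spliterr} and \eqref{prjpi} this gives the density part of \eqref{fulerrl2}.

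For the momentum I would repeat the passage from \eqref{sq10} to \eqref{err4m} at a fixed level $t_i\ge t_*$. Choosing $z=\zeta_h^i$ in the flux error and using the $L^2$-orthogonality $\intd{\eta^i,\zeta_h^i}=0$ gives $\norm{\zeta_h^i}^2=\intd{K(|\nabla\rho^i|)\nabla\rho^i-K(|\nabla\rho_h^i|)\nabla\rho_h^i,\zeta_h^i}$, which by \eqref{Lipchitz} and $K\le a_0^{-1}$ is dominated by the weighted quantity $\int_\Omega\big(\int_0^1 K(|\gamma^i(s)|)\,ds\big)|\nabla\rho^i-\nabla\rho_h^i|^2\,dx$. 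Rewriting the tested equation as in \eqref{sq10} and applying the monotone lower bound controls this weighted quantity by $C\norm{\rho_t^i-\partial\rho_h^i}\,\norm{\vartheta_h^i}+C\norm{\nabla\theta^i}^2$; invoking \eqref{Est-rhot} for $\rho_t^i$ together with a uniform-in-$\Dt$ bound for the difference quotient $\partial\rho_h^i$ (the discrete counterpart of the estimate behind \eqref{Est4rhot}) reduces this to $\norm{\zeta_h^i}^2\le C\norm{\vartheta_h^i}+C\norm{\nabla\theta^i}^2$. Since $\norm{\vartheta_h^i}$ enters only to the first power, inserting the density bound just obtained and \eqref{prjpi} gives $\norm{m^i-m_h^i}^2\le C(h^k+\Dt)\le C(h^k+\sqrt{\Dt})$, completing \eqref{fulerrl2}.

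I expect the main difficulty to be the interplay between the degeneracy of $K$ and the minimal time regularity. Because the coercivity in \eqref{monotone0} carries the degenerate weight $\int_0^1K(|\gamma(s)|)\,ds$, it cannot be converted into an unweighted bound on $\norm{\nabla(\rho^i-\rho_h^i)}$; this is precisely why $\norm{\vartheta_h^i}$ survives only to the first power in the momentum step, and it is responsible for the rates $h^k$ (rather than $h^{k+1}$) and $\sqrt{\Dt}$ (rather than $\Dt$). The second delicate point is the projection-in-time term $\intd{\phi\,(I-\pi)\partial\rho^i,\vartheta_h^i}$: since only $\rho_{tt}\in L^2(I;L^2)$ is assumed, one cannot bound $\norm{(I-\pi)\partial\rho^i}$ by a power of $h$, and I would instead exploit the $L^2$-orthogonality of $\pi$ together with the elementwise estimate $\norm{(I-\pi)(\phi\,\vartheta_h^i)}\le Ch\norm{\nabla\phi}_\infty\norm{\vartheta_h^i}$ (valid since $\Ro_h$ is discontinuous and $\pi$ acts locally), or a summation by parts in time that transfers the difference quotient onto $\vartheta_h$ at the cost of a discrete time-derivative energy bound. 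Finally, the momentum estimate's reliance on a uniform-in-$\Dt$ control of $\partial\rho_h^i$ must be established separately by differencing \eqref{fullydiscreteform} in time and rerunning the energy argument, keeping $1-\Dt>0$ throughout.
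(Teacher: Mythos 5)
Your proposal is correct and follows essentially the same route as the paper's proof: the same projection-based error splitting (your right-hand side $(\rho_t^i-\partial\rho^i)+(I-\pi)\partial\rho^i$ coincides with the paper's $\rho_t^i-\partial\rho^i+\partial\theta^i$), the same test functions and telescoping identity, coercivity from \eqref{monotone0} with the cross term absorbed via \eqref{Lipchitz}, the same Taylor-remainder consistency bound, discrete Gronwall (Lemma~\ref{DGronwall}), and the same momentum estimate obtained by testing with $z=\zeta_h^i$ so that $\norm{\vartheta_h^i}$ enters only linearly. The two difficulties you flag are genuine and are precisely where the paper itself is loose: it bounds the time-projection term through $\norm{\theta_t}$ in \eqref{lhs3} (which under the stated hypothesis $\rho_{tt}\in L^2(I;L^2(\Omega))$ alone carries no power of $h$, and that computation of $\norm{\partial\theta^i}_\phi^2$ also drops a factor $\Dt^{-2}$), and it bounds $\norm{\partial\rho_h^i}_\phi$ by $C\sup_t\norm{\rho_{h,t}(t)}$ via the semidiscrete estimate \eqref{Est4rhot} without the time-differencing argument for the fully discrete scheme that you correctly identify as a separate necessary step.
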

\begin{proof}
Evaluating equation \eqref{weakform} at $t=t_i$ gives
\beq\label{fuldis1}
\begin{split}
 \intd{m^i, z}+\intd{K(|\nabla\rho^i|)\nabla\rho^i, z} =0,\\
\intd{\phi\rho_t^i,r}-\intd{m^i,\nabla r }=\intd{f^i,r}-\intb{\psi^i,r} . 
\end{split}
\eeq  
Subtracting \eqref{fullydiscreteform} from \eqref{fuldis1}, we obtain 
\beq\label{ErrSys}
\begin{split}
\intd{m^i -m_h^i, z} + \intd{K(|\nabla\rho^{i}|)\nabla\rho^{i}- K(|\nabla\rho_h^{i}|)\nabla\rho_h^{i}, z} =0,\quad \forall z\in \M_h,\\
\intd{\phi (\rho_t^i - \partial \rho_h^i),r}-\intd{m^i- m_h^i,\nabla r }=0 , \quad \forall r\in \Ro_h. 
\end{split}
\eeq
Choosing $r=-\vartheta_h^i, z=\nabla r$, and adding the two equations shows that   
\beq\label{errEq}
\intd{\phi (\rho_t^i-\partial\rho_h^i),\vartheta_h^i} + \intd{K(|\nabla\rho^{i}|)\nabla\rho^{i}- K(|\nabla\rho_h^{i}|)\nabla\rho_h^{i}, \nabla \vartheta_h^i}  =0 . 
\eeq 
Since  $\rho_t^i-\partial\rho_h^i = \rho_t^i-\partial \rho^i+ \partial \theta^i - \partial \vartheta_h^i,$  
we rewrite \eqref{errEq} in the form  
\beq\label{eereq}
\begin{split}
&(\partial \vartheta_h^i , \vartheta_h^i ) +  \left(K(|\nabla \rho^i|)\nabla \rho^i- K(|\nabla \rho_h^{i}|)\nabla \rho_h^{i}, \nabla \rho^i -\nabla\rho_h^{i} \right)\\
&\qquad\qquad=\left(K(|\nabla \rho^i|)\nabla \rho^i- K(|\nabla \rho_h^{i}|)\nabla \rho_h^{i}, \nabla \theta^i\right)+\intd{\phi (\rho_t^i-\partial \rho^i),\vartheta_h^i }+\intd{\phi\partial \theta^i ,\vartheta_h^i}.  
\end{split} 
\eeq
We will evaluate \eqref{eereq}  term by term. 

\textbullet\quad  For the first term, we use the identity 
\beq\label{rhs1}
\begin{split}
(\partial \vartheta_h^i , \vartheta_h^i )= \frac 1{2\Delta t}\left(\norm{\vartheta_h^i}_{\phi}^2  -\norm{\vartheta_h^{i-1}}_{\phi}^2\right)+\frac{\Delta t}{2}\norm{\partial \vartheta_h^i }_{\phi}^2 . 
\end{split}
\eeq

\textbullet\quad For the second term, the monotonicity of $K(\cdot)$ in \eqref{monotone0} yields 
 \beq\label{rhs2} 
 (K(|\nabla \rho^i|)\nabla \rho^i -K(|\nabla \rho_h^i|)\nabla \rho_h^i, \nabla\rho^i -\nabla\rho_h^i) \ge (\beta-1) \int_\Omega \Big(\int_0^1 K(|\gamma^i(s)|) ds\Big)|\nabla\rho^i -\nabla \rho_h^i|^2dx, 
 \eeq
 where  $ \gamma^i(s)=s\nabla \rho^i +(1-s)\nabla\rho_h^i. $
  
\textbullet\quad For the third term, using \eqref{Kr} gives       
\beq\label{lhs1}
(K(|\nabla \rho^i|)\nabla \rho^i -K(|\nabla \rho_h^i|)\nabla \rho_h^i ,\nabla \theta^i)
\le \frac{\beta-1} 2 \int_\Omega  \Big(\int_0^1 K(|\gamma^i(s)|) ds\Big)|\nabla \rho^i-\nabla \rho_h^i|^2 dx +C \norm{\nabla \theta^i}^2 .
\eeq
 
 \textbullet\quad For the fourth term, using Taylor expand we see that  
  \beq\label{lhs2}
  \begin{split}
\intd{\phi (\rho_t^i-\partial \rho^i),\vartheta_h^i }
 &\le \frac {1}{\Delta t} \norm{\int_{t_{i-1}}^{t_i} \rho_{tt}(\tau) (\tau-t_{i-1})   d\tau}_{\phi}\norm{\vartheta_h^i}_{\phi}\\
 &\le \frac {1}{\Delta t }\left(\int_{t_{i-1}}^{t_i} \norm{\rho_{tt}(\tau)}_{\phi}^2 d\tau\right)^{\frac1 2}\left(\int_{t_{i-1}}^{t_i} (\tau-t_{i-1})^2   d\tau\right)^{\frac 12} \norm{\vartheta_h^i}_{\phi}\\
 &\le C\Delta t \int_{t_{i-1}}^{t_i} \norm{\rho_{tt}(\tau)}_{\phi}^2 d\tau +\frac 1 4\norm{\vartheta_h^i}_{\phi}^2.
 \end{split}
 \eeq
 
 \textbullet\quad For the last term, using Young's and H\"older's inequalities, we find that  
 \beq\label{lhs3}
 \begin{aligned}
 \intd{\phi\partial \theta^i ,\vartheta_h^i}&\le C\norm{\partial \theta^i}_{\phi}^2 +\frac 1 4 \norm{\vartheta_h^i}_{\phi}^2= C\norm{\int_{t_{i-1}
}^{t_i}\theta_t(\tau) d\tau}_{\phi}^2   +\frac 1 4 \norm{\vartheta_h^i}_{\phi}^2\\
&\le C\Big(\int_{t_{i-1}}^{t_i} \norm{\theta_t(\tau)} d\tau\Big)^2   +\frac 1 4\norm{\vartheta_h^i}_{\phi}^2\\
&\le C\Dt \int_{t_{i-1}}^{t_i}\norm{\theta_t(\tau)}^2 d\tau   +\frac 1 4 \norm{\vartheta_h^i}_{\phi}^2.
 \end{aligned}
 \eeq
 
   In view of \eqref{rhs1}--\eqref{lhs3}, \eqref{eereq} yields       
\begin{multline}\label{vith}
 \frac 1{2\Delta t}\left(\norm{\vartheta_h^i}_{\phi}^2  -\norm{\vartheta_h^{i-1}}_{\phi}^2\right)+   \frac{\beta-1} 2 \int_\Omega  \Big(\int_0^1 K(|\gamma^i(s)|) ds\Big)|\nabla \rho^i-\nabla \rho_h^i|^2 dx \\ 
\le C\Dt \int_{t_{i-1}}^{t_i} \norm{\rho_{tt}(\tau)}^2 d\tau +\frac 1 2\norm{\vartheta_h^i}_{\phi}^2+ C\Dt  \int_{t_{i-1}}^{t_i}\norm{\theta_t(\tau)}^2 d\tau +C\norm{\nabla\theta^i}^2,
\end{multline}
which leads to 
\beq
 \frac {\norm{\vartheta_h^i}_{\phi}^2  -\norm{\vartheta_h^{i-1}}_{\phi}^2}{\Dt} -  \norm{\vartheta_h^i}_{\phi}^2  
\le C\Dt \int_{t_{i-1}}^{t_i} \norm{\rho_{tt}(\tau)}^2 d\tau + C\Dt  \int_{t_{i-1}}^{t_i}\norm{\theta_t(\tau)}^2 d\tau +C\norm{\nabla\theta^i}^2 .
\eeq
By mean of discrete Gronwall's inequality in Lemma~\ref{DGronwall} and the fact $\vartheta_h^0=0$, we find that  
\beq\label{comb3}
\begin{split}
\norm{\vartheta_h^i}^2 &\le C\sum_{j=1}^i (1-\Dt)^{-i+j-1} \Big(\Dt \int_{t_{j-1}}^{t_j} \norm{\rho_{tt}(\tau)}^2 d\tau + \Dt  \int_{t_{j-1}}^{t_j}\norm{\theta_t(\tau)}^2 d\tau +\norm{\nabla\theta^j}^2 \Big)\\
&\le C \Big(\Dt \int_0^T \norm{\rho_{tt}(\tau)}^2 d\tau + \Dt  \int_0^T \norm{\theta_t(\tau)}^2 d\tau +\sum_{j=1}^N \norm{\nabla\theta^j}^2 \Big)
\end{split}
\eeq
since $ (1-\Dt)^{-i+j-1} \le  (1-\Dt)^{-N} \le e^{\frac{N\Dt}{1-\Dt}} \le e^{c_*T} $.

The first part of \eqref{fulerrl2} follows from combining \eqref{comb3} and the triangle inequality
$ \norm{\rho^i-\rho_h^i}\le \norm{\theta^i} +\norm{\vartheta_h^i}. $ 
 
 To the other part of \eqref{fulerrl2},we first estimate the term $ \int_\Omega \Big(\int_0^1 K(|\gamma^i (s)|) ds\Big)|\nabla \rho^i-\nabla \rho_h^i|^2 dx$ by rewriting \eqref{errEq} in the form 
\beq\label{ab}
 \intd{K(|\nabla \rho^i|)\nabla \rho^i -K(|\nabla \rho_h^i|)\nabla \rho_h^i,\nabla \rho^i -\nabla\rho_h^i }
  =(\phi(\rho_t^i -\partial\rho_h^i), \vartheta_h^i) + \intd{K(|\nabla \rho^i|)\nabla \rho^i -K(|\nabla \rho_h^i|)\nabla \rho_h^i ,\nabla \theta^i}.
\eeq
In virtue of the triangle inequality, and H\"older's inequality      
\beq\label{abc}
(\phi(\rho_t^i -\partial\rho_h^i), \vartheta_h^i) \le  (\phi(|\rho_t^i|+|\partial\rho_h^i|),|\vartheta_h^i|) \le (\norm{\rho_t^i}_{\phi}+\norm{\partial\rho_h^i}_{\phi} )\norm{\vartheta_h^i}_{\phi}.
\eeq

It follows from \eqref{ab}, \eqref{Kl}, \eqref{Kr} and \eqref{abc} that
\begin{align*}
\int_\Omega \Big(\int_0^1 K(|\gamma^i(s)|) ds\Big) |\nabla \rho^i-\nabla \rho_h^i|^2 dx \le C(\norm{\rho_t^i}_{\phi}+\norm{\partial\rho_h^i}_{\phi} )\norm{\vartheta_h^i}_{\phi} + C \norm{\nabla \theta^i}^2  .
\end{align*}
Note that $\norm{\partial\rho_h^i}_{\phi}\le C\sup_{t\in[t_1,T]}\norm{\rho_{h,t}(t)}$. 
Thanks to \eqref{Est4rhot} and equivalent norm, we find that 
\beq
 \int_\Omega \Big(\int_0^1 K(|\gamma^i (s)|) ds\Big)|\nabla \rho^i-\nabla \rho_h^i|^2 dx \le C  (\norm{\vartheta_h^i}_{\phi} + \norm{\nabla \theta^i}^2)\le C  (\norm{\vartheta_h^i} + \norm{\nabla \theta^i}^2).
\eeq

Then we chose $z=\zeta_h^i$ as the test function in the first equation of \eqref{ErrSys} to obtain 
\beq\label{mErrL2}
\begin{split}
\norm{\zeta_h^i}^2 &= - \intd{K(|\nabla\rho^i|)\nabla\rho^i - K(|\nabla\rho_h^i|)\nabla\rho_h^i, \zeta_h^i} \le C \int_\Omega \Big(\int_0^1 K(|\gamma^i (s)|) ds\Big) |\nabla \rho^i - \nabla \rho_h^i || \zeta_h^i| dx\\  
&\le C  \int_\Omega \Big(\int_0^1 K(|\gamma^i (s)|) ds\Big) |\nabla \rho^i-\nabla \rho_h^i |^2 dx dt\\
&\le C  (\norm{\vartheta_h^i} + \norm{\nabla \theta^i}^2).
\end{split}
\eeq   
The second part of \eqref{fulerrl2} follows from the combination of \eqref{comb3}, \eqref{mErrL2} and the approximation properties. The proof is complete.  
\end{proof}

The following theorem about an error estimate for $(\rho_h^i, m_h^i)$ is obtained by using the same manner as in the proof of Theorem~\ref{par-thm}.     

\begin{theorem}\label{Dep-ful} Let $(\rho_j^i,m_j^i)$, j=1,2 solve problem \eqref{weakform} and $(\rho_{h,j}^i, m_{h,j}^i)$ solve the fully  discrete finite element approximation \eqref{fullydiscreteform} corresponding to vector coefficients  $\vec a_j$ of Forchheimer polynominal $g(s, \vec a_j)$ in \eqref{eq2}  for each time step $i$, $i=1,\ldots, N$. Suppose that each $(\rho_j, m_j)\in L^\infty(I; H^{k+1}(\Omega) ) \times \left(L^\infty(I; L^2(\Omega))\right)^d$  and $\rho_{tt,j}\in L^\infty(I; L^2(\Omega) )$. There exists a positive constant  $C$  independent of $h,$ $\Delta t$ and $ | \vec a_1 -\vec a_2 |$  such that if  the $\Delta t$ is sufficiently small then 
 \beq\label{ErrContEst}
 \norm {\rho_{h,1}^i -\rho_{h,2}^i} + \norm {m_{h,1}^i -m_{h,2}^i}^2 \le C\left( h^{k}+|\vec a_1-\vec a_2|+ \sqrt{\Dt} \right).
\eeq  
 \end{theorem}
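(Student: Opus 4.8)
The plan is to imitate the proof of Theorem~\ref{par-thm} verbatim, simply exchanging the semidiscrete error estimate for its fully discrete counterpart. The quantity to be bounded, $\norm{\rho_{h,1}^i - \rho_{h,2}^i} + \norm{m_{h,1}^i - m_{h,2}^i}^2$, compares two \emph{numerical} solutions, and the key device is to connect them through the two \emph{exact} solutions $(\rho_j^i, m_j^i)$ of \eqref{weakform}. First I would insert these exact solutions via the triangle inequality,
\beqs
\norm{\rho_{h,1}^i - \rho_{h,2}^i} \le \sum_{j=1,2}\norm{\rho_{h,j}^i - \rho_j^i} + \norm{\rho_1^i - \rho_2^i},
\eeqs
and, after squaring the analogous inequality for the momenta and using $(a+b+c)^2 \le 3(a^2+b^2+c^2)$,
\beqs
\norm{m_{h,1}^i - m_{h,2}^i}^2 \le 3\Big(\sum_{j=1,2}\norm{m_{h,j}^i - m_j^i}^2 + \norm{m_1^i - m_2^i}^2\Big).
\eeqs

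Next I would estimate each of the two families of terms by a result already proved. The numerical-error terms $\norm{\rho_{h,j}^i - \rho_j^i}$ and $\norm{m_{h,j}^i - m_j^i}^2$ are controlled, for each fixed coefficient vector $\vec a_j$, by the fully discrete error estimate \eqref{fulerrl2} of Theorem~\ref{Err-ful}, yielding a contribution of size $C(h^k + \sqrt{\Dt})$; the regularity hypotheses imposed on $(\rho_j,m_j)$ and $\rho_{tt,j}$ in the present statement are exactly those needed to invoke that theorem for each index $j$. The parameter-dependence terms $\norm{\rho_1^i - \rho_2^i}$ and $\norm{m_1^i - m_2^i}^2$ are pointwise-in-time values at $t_i \ge t_*$ of the quantities bounded in \eqref{ssc2} of Theorem~\ref{DepCoeff}; since that estimate is stated in $L^\infty(I;L^2)$ for the density difference and in $L^\infty(t_*,T;L^2)$ for the momentum difference, it dominates the value at $t=t_i$ directly, contributing $C|\vec a_1 - \vec a_2|$. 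Adding the two contributions produces exactly the claimed bound \eqref{ErrContEst}.

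Because both ingredients are already in hand, there is no genuinely hard analytic step; the argument is essentially bookkeeping. The one point demanding care is keeping the constants uniform in $h$, $\Dt$, and $|\vec a_1 - \vec a_2|$ \emph{simultaneously}: Theorem~\ref{Err-ful} supplies constants independent of $h$ and $\Dt$ but depending on the fixed coefficient vector through the stability bounds, so I would observe that $\vec a_1,\vec a_2$ range over the fixed compact set ${\bf D}$ and that $\hat\chi({\bf D})$ therefore controls all the structural constants $c_0,\ldots,c_5$ uniformly. The remaining subtlety is purely administrative, namely ensuring that the ``$\Dt$ sufficiently small'' requirement inherited from Theorem~\ref{Err-ful}, and ultimately from the condition $1-\Dt>0$ of the discrete Gronwall Lemma~\ref{DGronwall}, is imposed once so that both fully discrete error estimates hold at the same time for $j=1,2$.
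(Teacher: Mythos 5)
Your proposal is correct and follows exactly the route the paper intends: the paper gives no separate proof of Theorem~\ref{Dep-ful}, stating only that it is ``obtained by using the same manner as in the proof of Theorem~\ref{par-thm},'' which is precisely your argument --- triangle inequality through the exact solutions, the fully discrete error estimate \eqref{fulerrl2} of Theorem~\ref{Err-ful} for the numerical-error terms, and the continuous-dependence bound \eqref{ssc2} of Theorem~\ref{DepCoeff} for the parameter terms. Your additional remarks on uniformity of constants over the compact set ${\bf D}$ and on imposing the smallness of $\Dt$ once for both $j=1,2$ are sound bookkeeping that the paper leaves implicit.
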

\section{Numerical results} \label{Num-result}

In this section we carry out numerical experiments using mixed finite element approximation to solve problem \eqref{fullydiscreteform} in two dimensional region. For simplicity, the region of examples are unit square $\Omega=[0,1]^2$.    
The triangularization in region $\Omega$ is uniform subdivision in each dimension. We use the piecewise-linear elements for the both density and momentum variables. Our problem is solved at each time level starting at $t=0$ until the given final time $T=1$.  A Newton iteration was used to solve the nonlinear equation generated at each time step.

   The numerical examples in this section are constructed in two categories:    
   \begin{itemize}
   \item Examples 1A and 2A are used to study the convergence rates of the method proposed in the paper. We test the convergence of our method with the Forchheimer two-term law $g(s)=1+s$. Equation \eqref{Kdef} $sg(s)=\xi,$ $s\ge0$ gives
 $ s= \frac {-1 +\sqrt{1+4\xi}}{2}$ and hence 
$
K(\xi) =\frac1{g(s(\xi)) } =\frac {2}{1+\sqrt{1+4\xi}}.
$  

   \item Examples 1B and 2B are used to study the dependence of solution on physical parameters. We test the convergence of our method with the Forchheimer two-term law $g(s)=1+0.95s$. In this case  
$
K(\xi) =\frac {10}{5+\sqrt{25+95\xi}}.
$  
 \end{itemize}
   
 {\bf Example 1.} To test the convergence rates, we choose the the analytical solution  
\beqs
\rho(x,t)=e^{-2t}(x_1+ x_2) \quad\text{ and  }\quad m(x,t) =-\frac{2e^{-2t}(1; 1)}{1+\sqrt{1+4\sqrt{2}e^{-2t}} } \quad \forall  (x,t)\in [0,1]^2\times (0,1].
\eeqs
For simplicity, we take $\phi(x)=1$ on $\Omega$. The forcing term $f$ is determined from equation $\rho_t +\nabla \cdot m = f$. Explicitly, 
$$
f(x,t)= -2e^{-2t}(x_1+ x_2).
$$
The initial condition and boundary condition are determined according to the analytical solution as follows: 
\begin{align*}
 \rho^0(x) =x_1+x_2, \quad \psi(x,t)=\begin{cases}  
\frac{2e^{-2t}}{1+\sqrt{ 1+4\sqrt{2}e^{-2t}}}  & \text { on } \{0\}\times[0,1] \text { or } [0,1]\times\{0\},\\
-\frac{2e^{-2t}}{1+\sqrt{ 1+4\sqrt{2}e^{-2t}}}  & \text { on } \{1\}\times[0,1] \text{ or } [0,1]\times\{1\}.
 \end{cases}
\end{align*}
The numerical results are listed in Table 1A. 

 \vspace{0.2cm}  
\begin{center}
\begin{tabular}{l| c| c| c| c}
\hline
N    & \qquad $\norm{\rho-\rho_h}$ \qquad  &  \quad Rates  \quad&  \quad$\norm{m-m_h}$ \quad & \quad  Rates \quad  \\
\hline
4	&$1.865E-01$        	&--          		& $7.557E-01$		&--\\
8	&$1.529E-01$	   		&$0.287$  		& $7.776E-01$  	&$0.157$\\
16	&$9.651E-02$	   		&$0.664$    	& $5.784E-01$ 	&$0.228$\\
32	&$5.724E-02$			&$0.754$  		& $4.910E-01$ 	&$0.236$\\
64	&$3.389E-02$	   		&$0.756$		& $4.140E-01$		&$0.246$\\
128	&$2.000E-02$			&$0.761$		& $3.460E-01$		&$0.259$\\
256	&$1.179E-02$  		&$0.762$		& $2.981E-01$		&$0.259$\\
512	&$6.901E-03$  		&$0.773$		& $2.407E-01$		&$0.264$\\
\hline
\end{tabular}
\vspace{0.2cm} 

Table 1A. {\it Convergence study for Darcy-- Forchheimer flows using mixed FEM in 2D.}
\end{center} 

Next, we consider a small change in coefficients of Forchheimer polynomial $g$, namely  $g(s)=1+0.95s$. In this case,  the analytical solution is chosen by 
\beqs
\rho(x,t)=e^{-2t}(x_1+x_2) \quad\text{ and  }\quad m(x,t) =-\frac{10e^{-2t}(1; 1)}{5+\sqrt{25+95\sqrt{2}e^{-2t}} } \quad \forall  (x,t)\in [0,1]^2\times (0,1].
\eeqs
The forcing term $f$, initial condition and boundary condition accordingly are  
\begin{align*}
&f(x,t)= -2e^{-2t}(x_1+x_2),\quad \rho^0(x) =x_1+x_2, \quad \psi(x,t)=\begin{cases}  
\frac{10e^{-2t}}{5+\sqrt{ 25+95\sqrt{2}e^{-2t}}}  & \text { on } \{0\}\times[0,1] \text { or } [0,1]\times\{0\},\\
-\frac{2e^{-2t}}{5+\sqrt{ 25+95\sqrt{2}e^{-2t}}}  & \text { on } \{1\}\times[0,1] \text{ or } [0,1]\times\{1\}.
\end{cases}
\end{align*}
We use $\norm{\rho_{1,h}-\rho_{2,h}}$ and $\norm{m_{1,h}-m_{2,h}}$ as the criterion to measure the dependence of solutions on the coefficients of $g$. The numerical results are listed in Table 1B.

\vspace{0.3cm}  
\begin{center}
\begin{tabular}{l| c| c| c| c}
\hline
N    & \qquad $\norm{\rho_{1,h}-\rho_{2,h}}$ \qquad  &  \quad Rates  \quad&  \quad$\norm{m_{1,h}-m_{2,h}}$ \quad & \quad  Rates \quad  \\
\hline
4		&$7.610E-4$      	&--   	        		& $1.640E-3$			&--\\
8		&$5.666E-4$	   	&$0.426$  			& $1.427E-3$  			&$0.131$\\
16		&$4.207E-4$		&$0.430$    		& $1.236E-3$ 			&$0.207$\\
32		&$2.705E-4$		&$0.637$   		& $1.050E-3$ 			&$0.235$\\
64		&$1.634E-4$		&$0.727$			& $8.890E-4$			&$0.240$\\
128		&$9.736E-5$       	&$0.747$			& $7.509E-4$ 			&$0.244$\\
256  	&$5.792E-5$       	&$0.749$			& $6.316E-4$			&$0.250$\\
512  	&$3.445E-5$       	&$0.750$			& $5.312E-4$			&$0.250$\\
\hline
\end{tabular}

\vspace{0.2cm}
Table 1B. {\it Study the dependence of solution of Darcy--Forchheimer flows using mixed FEM in 2D.}
\end{center}
   
   {\bf Example 2.} In this example, we still take $\phi(x)=1$ on $\Omega$. The analytical solution is 
\beqs
\rho(x,t)=e^{-t}w^2(x) \quad\text{ and  }\quad m(x,t) =-\frac{4e^{-t}(x_1; x_2)}{1+\sqrt{1+8e^{-t}w(x)} } \quad \forall  (x,t)\in [0,1]^2\times (0,1],
\eeqs
where $w(x)=\sqrt{x_1^2+x_2^2}$.  The forcing term $f$, initial condition $\rho^0(x)$ and boundary condition $\psi(x,t)$ are as follows 
\begin{align*}
&f(x,t)= -e^{-t}w^2(x)+\frac{16e^{-2t}w^2(x)}{w(x) \sqrt{1+8e^{-t}w(x)}\left(1+\sqrt{1+8e^{-t}w(x)}\right)^2 }-\frac{8e^{-t}}{1+\sqrt{1+8e^{-t}w(x)}},\\
&\hspace{3cm}\rho^0(x) =w^2(x), \quad \psi(x,t)=\begin{cases}  
0  & \text { on } \{0\}\times[0,1] \text{ and }  [0,1]\times\{0\},\\
\frac{-4e^{-t}}{ 1+\sqrt{1+8e^{-t}\sqrt{1+x_2^2}}  }  & \text { on } \{1\}\times[0,1],\\
 \frac{-4e^{-t}}{ 1+\sqrt{1+8e^{-t}\sqrt{x_1^2+1}} }  & \text { on } [0,1]\times\{1\}.
\end{cases}
\end{align*}
The numerical results are listed in Table 2A. 

 \vspace{0.2cm}  
\begin{center}
\begin{tabular}{l| c| c| c| c}
\hline
N    & \qquad $\norm{\rho-\rho_h}$ \qquad  &  \quad Rates  \quad&  \quad$\norm{m-m_h}$ \quad & \quad  Rates \quad  \\
\hline
4	&$2.331e-01$        	&--          		& $9.742E-01$		&--\\
8	&$1.651E-01$	   		&$0.489$  		& $8.926E-01$  	&$0.126$\\
16	&$1.027E-01$	   		&$0.684$    	& $7.511E-01$ 		&$0.249$\\
32	&$6.249E-02$			&$0.717$  		& $6.320E-01$ 	&$0.249$\\
64	&$3.756E-02$	   		&$0.734$		& $5.318E-01$		&$0.249$\\
128	&$2.245E-02$			&$0.742$		& $4.475E-01$		&$0.249$\\
256	&$1.338E-02$  		&$0.746$		& $3.765E-01$		&$0.249$\\
512	&$7.968E-03$  			&$0.748$		& $3.168E-01$		&$0.249$\\
\hline
\end{tabular}

\vspace{0.2cm}
Table 2A. {\it Convergence study for Darcy-- Forchheimer flows using mixed FEM in 2D.}
\end{center} 

\vspace{0.2cm}

For $g(s)=1+0.95s$, the analytical solution is chosen by 
\beqs
\rho(x,t)=e^{-t}w^2(x) \quad\text{ and  }\quad m(x,t) =-\frac{20e^{-t}(x_1; x_2)}{5+\sqrt{25+190e^{-t}w(x)} } \quad \forall  (x,t)\in [0,1]^2\times (0,1].
\eeqs
The forcing term $f,$ initial condition and boundary condition accordingly are  
\begin{align*}
&f(x,t)= -e^{-t}w^2(x)+\frac{1900 e^{-2t}w^2(x)}{w(x) \sqrt{25+190e^{-t}w(x)}\left(5+\sqrt{25+190e^{-t}w(x)}\right)^2 }-\frac{40e^{-t}}{5+\sqrt{25+190e^{-t}w(x)}},\\
&\hspace{3cm}\rho^0(x) =w^2(x), \quad \psi(x,t)=\begin{cases}  
0  & \text { on } \{0\}\times[0,1] \text{ and }  [0,1]\times\{0\},\\
\frac{-20e^{-t}}{ 5+\sqrt{25+190e^{-t}\sqrt{1+x_2^2}}  }  & \text { on } \{1\}\times[0,1],\\
 \frac{-20e^{-t}}{ 5+\sqrt{25+190e^{-t}\sqrt{x_1^2+1}} }  & \text { on } [0,1]\times\{1\}.
\end{cases}
\end{align*}

The numerical results are listed in Table 2B.

\vspace{0.2cm}
\begin{center}
\begin{tabular}{l| c| c| c| c}
\hline
N    & \qquad $\norm{\rho_{1,h}-\rho_{2,h}}$ \qquad  &  \quad Rates  \quad&  \quad$\norm{m_{1,h}-m_{2,h}}$ \quad & \quad  Rates \quad  \\
\hline
4		&$1.950E-03$      	&--   	        		& $4.430E-03$			&--\\
8		&$1.325E-03$	   	&$0.557$  			& $4.008E-03$  		&$0.115$\\
16		&$8.858E-04$		&$0.581$    		& $3.501E-03$ 		&$0.195$\\
32		&$5.712E-04$		&$0.633$   		& $2.975E-03$ 		&$0.235$\\
64		&$3.580E-04$		&$0.674$			& $2.507E-03$			&$0.247$\\
128		&$2.204E-04$       	&$0.700$			& $2.108E-03$ 		&$0.250$\\
256  	&$1.341E-04$       	&$0.717$			& $1.772E-03$			&$0.251$\\
512  	&$8.096E-05$       	&$0.728$			& $1.489E-03$			&$0.251$\\
\hline
\end{tabular}

\vspace{0.2cm}

Table 2B. {\it Study the dependence of solution of Darcy--Forchheimer flows using mixed FEM in 2D.}
\end{center}
%

\bibliographystyle{siam}
\def\cprime{$'$} \def\cprime{$'$} \def\cprime{$'$}

\end{document}